\theoremstyle{plain}
{
\swapnumbers
 \newtheorem{thm}{Theorem}
 [subsection]
 
 \newtheorem{cor}[thm]{Corollary}
 \newtheorem{lem}[thm]{Lemma}
 \newtheorem{prop}[thm]{Proposition}
 \newtheorem{rem}[thm]{Remark}

 %%%%%%%%%%%%%%%%%%%%%%%%%%%%%%%%%%%%%%%%%%%%%%%%%%%%%%%%%%%
 \newtheorem{exa}[thm]{Example}
 \newtheorem{que}[thm]{Question}
 \newtheorem{defi}[thm]{Definition}
}
\theoremstyle{definition}
{
\swapnumbers
 
}
\renewcommand{\subsubsection}{\sssection}
\newcommand{\zz}{{\Bbb Z}}
\newcommand{\aaa}{{\Bbb A}}
\newcommand{\hh}{{\Bbb H}}
\newcommand{\ddim}{\operatorname{dim}}
\newcommand{\ddet}{\operatorname{det}}
\newcommand{\ddeg}{\operatorname{deg}}
\newcommand{\kker}{\operatorname{Ker}}
\newcommand{\Hom}{\operatorname{Hom}}
\newcommand{\op}[1]{\operatorname{#1}}
\newcommand{\kbar}{\overline{k}}
\newcommand{\ffi}{\varphi}
\newcommand{\eps}{\varepsilon}
\newcommand{\la}{\langle}
\newcommand{\ra}{\rangle}
\newcommand{\lva}{\langle\!\langle}   % @!=1/10*\!   negative space
\newcommand{\rva}{\rangle\!\rangle}   % @,=1/10*\,   positive space
\newcommand{\row}{\rightarrow}
\newcommand{\llow}{\longleftarrow}
\newcommand{\low}{\leftarrow}
\newcommand{\lrow}{\longrightarrow}
\renewcommand{\leq}{\leqslant}
\renewcommand{\geq}{\geqslant}
\newcommand{\calm}{{\cal M}}
\newcommand{\hm}{\operatorname{H}_{\calm}}
\newcommand{\nichego}[1]{}
\newcommand{\ov}[1]{\overline{#1}}
\newcommand{\wt}[1]{\widetilde{#1}}
\newcommand{\smF}[1]{{\operatorname {Sm}_{#1}}}
\newcommand{\cv}{{\cal V}}
\newcommand{\cu}{{\cal U}}
\newcommand{\cy}{{\cal Y}}
\newcommand{\bzar}[1]{B{#1}}
\newcommand{\btzar}[1]{\wt{B}{#1}}
\newcommand{\bttzar}[1]{{\hat{B}}{#1}}
\newcommand{\bzart}[1]{\wt{B}{#1}}
\newcommand{\bet}[1]{B{#1}_{et/Nis}}
\newcommand{\betO}[1]{B{#1}_{et}}
\newcommand{\bgm}[1]{B{#1}_{gm}}
\newcommand{\ezar}[1]{E{#1}}
\newcommand{\egm}[1]{E{#1}_{gm}}
\newcommand{\hk}{{\cal H}(k)}
\newcommand{\hF}[1]{{\cal H}(#1)}
\newcommand{\hH}{{\cal H}}
\newcommand{\hsF}[1]{{\cal H}_s(#1)}
\newcommand{\hs}{{\cal H}_s}
\newcommand{\dmkDVA}{\op{DM}^{-}_{eff}(k;\zz/2)}
\newcommand{\dm}[1]{\op{DM}^{-}_{eff}(#1)}
\newcommand{\dmDVA}[1]{\op{DM}^{-}_{eff}(#1;\zz/2)}
\newcommand{\dmR}[2]{\op{DM}^{-}_{eff}(#1,#2)}
\newcommand{\dmcohR}[2]{\op{DM}^{-}_{coh}(#1,#2)}
\newcommand{\hiq}[1]{{\cal{X}}_{#1}}
\newcommand{\hiqt}[1]{\widetilde{{\cal{X}}}_{#1}}
\newcommand{\gm}{{\Bbb G}_m}
\newcommand{\ax}[1]{\alpha_{#1}}
\newcommand{\car}{{\cal R}}
\newcommand{\dcar}{\partial{\cal R}}
\newcommand{\tate}[1]{T_{#1}}
\newcommand{\laa}{{\mathbf{R}}}
\newcommand{\hsp}{\hspace{2mm}}
\newcommand{\LRw}{\hsp\Leftrightarrow\hsp}
\newcommand{\Qed}{\hfill$\square$\smallskip}
\newtheorem{prop}{Proposition}[section]{\bf}{\it}
\newtheorem{thm}[prop]{Theorem}{\bf}{\it}
\newtheorem{lem}[prop]{Lemma}{\bf}{\it}
{\bf}{\it}
\newtheorem{defi}[prop]{Definition}{\bf}{\it}
{\bf}{\it}
{\bf}{\it}
\newtheorem{exa}[prop]{Example}{\bf}{\it}
\newtheorem{rem}[prop]{Remark}{\bf}{}
\newtheorem{que}[prop]{Question}{\bf}{}
\newtheorem{cor}[prop]{Corollary}{\bf}{\it}
{\bf}{\it}
\newcommand{\Spec}{\operatorname{Spec}}
\newcommand{\Field}{k}
\newcommand{\charac}{\operatorname{char}}
\newcommand{\Shv}{\operatorname{Shv}}
\newcommand{\Spc}{\operatorname{Spc}}
\newcommand{\cosk}{\operatorname{cosk}}
\newcommand{\Supp}{\operatorname{Supp}}
\newcommand{\Img}{\operatorname{Im}}
\newcommand{\TTT}{{\operatorname{T}}}
\newcommand{\EEE}{\operatorname{E}}
\newcommand{\rk}{\operatorname{rk}}
\newcommand{\Aut}{\operatorname{Aut}}
\newcommand{\Iso}{\operatorname{Iso}}
\newcommand{\PST}{\operatorname{PST}}
\newcommand{\sbt}{\,\begin{picture}(-1,1)(-1,-3)\circle*{3}\end{picture}\,\ }
\newcommand{\sSpc}{\operatorname{sSpc}}
\begin{document}

\title {Subtle Characteristic Classes}

\author{A.\,Smirnov\footnote{Partially supported by RFFI (grant
13-01-00429 À).}, A.\,Vishik\footnote{Partially supported by EPSRC (RM grant
EP/G032556/1).}}

\date{}

\maketitle

%%%%%%%%%%%%%%%%%%%%%%%%%%%%%%%%%%%%%%%%%%%%%%%%%%%%%%%%%%%%%%%%%%%%%%%%%%%%%%%%%%%%%%%%%

\begin{abstract}
\noindent We construct new {\it subtle Stiefel--Whitney classes} of quadratic forms.
These classes are much more
informative than the ones introduced by Milnor. In particular, they see all the powers of the
fundamental ideal of the Witt ring, contain the Arason invariant and it's higher analogues.
Moreover, the new classes allow to treat
the $J$-invariant of quadrics. This invariant, introduced in \cite{CGQG},
has been so far completely isolated from
characteristic classes. In addition, our classes allow to describe explicitly the
structure of some motives associated with quadratic forms.
\end{abstract}

%%%%%%%%%%%%%%%%%%%%%%%%%%%%%%%%%%%%%%%%%%%%%%%%%%%%%%%%%%%%%%%%%%%%%%%%%%%%%%%%%%%%%%%%%

\section{Introduction}

This work grew out of attempts to develop a sufficiently generic motivic homotopic approach
to the classification of algebro-geometric structures and to apply it to quadratic forms.

Here we restrict our considerations to the classification of torsors of algebraic
groups over a field. Usually such torsors are classified by the \'etale cohomology,
whereas the Zariski topology is quite inadequate, especially over a field. The situation changes
if one consider the large Zariski site instead of the small one, since then torsors are split by
appropriate schemes. Here some new fenomenon
appears.
%%% :
Namely,
the sheaf represented by
%%% the splitting scheme
a torsor
does not surject to the base. Therefore, when
%%%we pass
passing
to sheaves
we get
%%% the
a torsor not over the
base, but over it's part.
It is called
%%% -
the support of
the corresponding sheaf.
%%% our original torsor.
Consequently, one obtains the
natural arrow from this support to the respective classificator.
%%% This map
These arrows
%%% from the support to the classificator is a
are
very interesting
%%% invariant
invariants of
%%% a torsor
torsors
which should play an important role
in the motivic homotopic approach to the classification.
%%% Note that actually
Actually
we get the whole family of classificators, supports,
%%%
and arrows indexed by
appropriate topologies. We mainly consider
the Nisnevich topology
since
%%% it is associated with
in this case we have
a well-developed motivic homotopy theory,
namely the theory of F.\,Morel and V.\,Voevodsky.
Our results demonstrate that
this way we get
sufficiently rich
invariants.

As soon as we have the above
arrow
to the classificator
we can apply any cohomology functor to it getting
a
%%% ring
homomorphism
from the cohomology
%%% ring
of the classificator to the
%0%0 the
cohomology
%%% ring
of the support.
This invariant, introduced in $\S\ref{ClassificationProblemS}$ for arbitrary groups, is the main
%%% invariant
%0%0 (subject)
object we study in the current paper.
%%% This is the main invariant we study in the current paper.
%0%0
The main novelty of our approach is that we consider the Nisnevich classificator where traditionally an
\'etale one was studied, and that we (respectively) substitute the "point" by the "support of the torsor".
This permits to produce invariants which are much more informative.
%0%0

%%% After developing the general theory for arbitrary groups in $\S\ref{ClassificationProblemS}$
%%% ,
%%% we specialize it in $\S\ref{CohomologyOfBOS}$ to the case of the orthogonal group.

In $\S\ref{CohomologyOfBOS}$ we
%%% specialize to the case of the orthogonal group
focus on the orthogonal case.
The torsors here are in one-to-one correspondence with quadratic forms. As
%%% a
the cohomology functor
we
%%% choose
take the
motivic
%%% cohomology
$\zz/2$-cohomology
%%% with $\zz/2$-coefficents.
%%% We
and
compute
it
%%% the motivic cohomology of
for
%%% the Nisnevich classificator for the group $O(n)$.
the Nisnevich classificator $BO(n)$.
%%% of for the group $O(n)$.
%%% We show in Theorem \ref{u} below
Theorem \ref{u} says
that the result is the polynomial
ring over motivic cohomology of the base field with canonical generators $u_1,\ldots,u_n$.
These are our
%%% {\it Subtle Stiefel-Whitney classes}.
%%% {\it Subtle Stiefel--Whitney classes}.
{\it subtle }Stiefel--Whitney classes.
The description we get is much
%%% more simple
simpler
than that for the motivic cohomology of the \'etale classificator
%%% (
(this cohomology is computed by
%%% N.Yagita - see
N.\,Yagita
%%%-
in \cite{YaCF}).
%%% Applying
%%% the pull back
%%% the
The classes $u_i$'s by means of
pullback,
%%% with respect to
associated with
%0%0 the
%%% classifying map
the canonical arrow to the classificator $BO(n)$,
%%% we get
give
the
%%% Subtle Stiefel-Whitney
subtle Stiefel--Whitney
classes of
%%% an
any
individual quadratic form $q$.
%%% These
They
take values in the
motivic $\zz/2$-cohomology
%%% $\hm^{*,*'}(\hiq{X_q},\zz/2)$
of the Chech simplicial scheme
related
%%% associated
to the
torsor $X_q$.

Originally, in the context of quadratic forms,
the Stiefel--Whitney classes $w_i$ were introduced algebraically by
%%% J.Milnor
J.\,Milnor
in \cite{Mil}
(the paper where the Milnor's K-theory was also introduced).
%%% These classes
They
%%% were used
allow
to identify $K^M_2(\Field)/2$ with
the second component of the graded Witt ring.
The
%%% same
classes
$w_i$
were interpreted as
%%% pull backs
pullbacks
of
some
%%% canonical
\'etale cohomology classes of the
corresponding
classificator by
%%% H.Esnault-B.Kahn-M.Levine-E.Viehweg
H.\,Esnault, B.\,Kahn, and E.\,Viehweg
in \cite{EKV} (see also J.\,F.\,Jardine \cite{Jar}).
%Now, due to the Beilinson-Lichtenbaum "conjecture" (proven by V.Voevodsky),
%we can view these classes as motivic cohomology classes of the classificator.
The drawback of the classes of
%%% J.Milnor
J.\,Milnor
though is that they are trivial on
$I^3(\Field)$
%%% (provided $-1$ is a square in $\Field$), which makes their use for the classification
(provided $(-1)$ is a square in $\Field$), which makes their use for the classification
of quadratic forms quite limited.
In contrast, our classes
%%% not only
are non-trivial on any power $I^n$ of the fundamental ideal.
%%% , but
Moreover,
they distinguish if the form is in $I^n$, or not
%%% -
(see Theorem \ref{In}), and so distinguish
the triviality of
%%% the torsor -
torsors
%%% (see Corollary \ref{Utriv}).
(see Cor.\,\ref{Utriv}).
We establish explicit
%%% relation
relations
between our {\it subtle} classes and classical ones as well as with
the Chern classes. It appears that our classes are "approximately" equal to square roots of
%%%
the
Chern classes,
while are obtained from $w_i$'s by "dividing" the latter by some powers of
%%% $\tau\in\hm^{0,1}(\Spec(\Field),\zz/2)$.
$\tau\in\hm^{0,1}(\Spec \Field,\zz/2)$.
%% (in particular, the topological realization functor
In particular, the topological realization
identifies $u_i$ and $w_i$ with the topological
%%% Stiefel-Whitney
Stiefel--Whitney
class.
%%% ).
%%% And
Since
multiplication by $\tau$ is usually a very non-injective map in
the motivic cohomology,
%%% $\hm^{*,*'}(\hiq{X_q},\zz/2)$,
%%% which explains
we see
why Milnor's classes loose so much information.

We compute the action of the Steenrod operations on our classes which appears to be as simple as
in the topological case
%%% (provided $-1$ is a square in $\Field$) - see Proposition \ref{Stinrodu},
provided $(-1)$ is a square in $\Field$ (see Prop.\,\ref{Stinrodu}).
We
%%% and
also describe
%%% what happens to
the behavior of the
subtle classes under
%%% the
addition of
%%% the
quadratic forms
%%% - see Proposition \ref{Olmn}.
(see Prop.\,\ref{Olmn}).
After that it becomes possible to compute these classes effectively.
In particular, we describe them completely in the case of a Pfister form
%%% - see Theorem \ref{Pfister}.
(see Theorem \ref{Pfister}).
Here we are exploiting the fact that
%%%
the
motivic cohomology of the respective Chech simplicial scheme is
known in this case. This computation
enables us to show that
%%% Subtle Stiefel-Whitney classes do
the subtle Stiefel--Whitney classes do
remember
%%%
the
Arason invariant and higher invariants identifying $I^r/I^{r+1}$ with $K^M_r(\Field)/2$.
This is done in Theorem \ref{en}.

Finally, we use our classes to describe the motive of the torsor $X_q$ and the motive
of the highest quadratic
Grassmannian as an explicit extension of
%%% (Tate-shifts) of
twisted
motives of the simplicial
%%%
Chech
scheme, related to $X_q$
%%% $\hiq{X_q}$
%%% -
(see
%%% Proposition
%0%0 Prop.
Theorem\,\ref{XhiqX} and Theorem \ref{GdhiqEV}). It appears that these motives have poly-binary
structure (are
tensor
products of motives each of which over algebraic closure decomposes as a sum of
just two Tate-motives). This
%%% permit
allow
to relate
%%% Subtle Stiefel-Whitney
subtle Stiefel--Whitney
classes with the
$J$-invariant of $q$
%%% - see Corollary \ref{u->J} and Question \ref{Jcond}.
(see \ref{u->J} and \ref{Jcond}).
Thus, our classes connect
%%% Stiefel-Whitney
Stiefel--Whitney
classes with the $J$-invariant.
%0%0 .
%%% -
These areas
%%% which
were previously
completely isolated from each other.

%%% We are certain that one should have
%%% Of course, there should be
Of course, there are
similar
%%% "subtle"
subtle
versions of
%%% known
other
characteristic classes
%%% for other groups,
and these
%%% should
would
be a cornestone of
the classification of
%%% the
respective
%%% torsors.
structures.
%%%
As for quadratic forms,
%%% we
%%% believe
%%% are sure
%%% that
the
%%% Subtle Stiefel-Whitney
subtle Stiefel--Whitney
classes should serve as a
%%% $0$-th
zero-order
step in the
homotopic classification of quadratic forms in the same way as the $J$-invariant is the
%%% $0$-th
zero-order,
%%% (but the most important)
but the most important
step of the $EDI$-invariant (see \cite{u}). Thus, the task now
is to build
%0%0 built
those next layers on top of what we have.
Probably, it will involve some sort of higher subtle characteristic classes.
%%% (which, probably, will involve some sort of "higher subtle characteristic classes").
%%% \\

%%% {\bf Acknowledgements:}
We
%%% would like to thank M.Schlichting
thank M.\,Schlichting
for very useful discussions.
We are very grateful to the Institute for Advanced Study at Princeton, and to the
EPSRC Responsive Mode grant EP/G032556/1 for the support which made it possible for our
collaboration to happen.

%%%%%%%%%%%%%%%%%%%%%%%%%%%%%%%%%%%%%%%%%%%%%%%%%%%%%%%%%%%%%%%%%%%%%%%%%%%%%%%%

\section{Torsors and Their Classificators}
\label{ClassificationProblemS}

Let $\Field$ be a field, $S=\Spec\Field$,
%%%
and $G$
%%% -
be a
smooth linear algebraic group over $S$.
We are
%%% planning
going
to consecutively describe the problem of classification of $G$-torsors
on the pre-homotopic level, that is, on the level of spaces, then on the level of
simplicial homotopic category
%0%0 $\mathcal H_s = \mathcal H_s(S)$
$\hs=\hs(S)$, and finally on
the level of homotopic category
%0%0 $\mathcal H = \mathcal H(S)$
$\hH=\hH(S)$.

%%%%%%%%%%%%%%%%%%%%%%%%%%%%%%%%%%%%%%%%%%%%%%%%%%%%%%%%%%%%%%%%%%%%%
\subsection{Algebraic torsors}
\label{ClassificationProblemStatementSS}

The use of the notion of $G$-torsor in various situations will require a
certain degree of precision from us. Let us achieve it through the following
chain of definitions.

\begin{defi}
\label{AlgTorsor}
An "algebraic $G$-torsor" is a non-empty $S$-scheme $P$ together with
%%% the
an
action
$\alpha: G\times_S P \to P$
%%% , so
such
that the map $(\alpha, \pi_P):G\times P \to P\times P$
is an isomorphism. Here $\pi_P$ is the projection $G\times P \to P$.
\end{defi}

%%%%%%%%%%%%%%%%%%%%%%%%%%%%%%%%%%%%%%%%%%%%%%%%%%%%%%%%%%%%%%%%%%%%%
\subsubsection{Example.}
\label{Example100}
In our main example
%%% ,
$char(\Field)\neq 2$, $q$
%0%0 is a
%%% --
is
%%%
a
non-degenerate quadratic form over
$\Field$, and $G$
%%% --
is
the orthogonal group of $q$. With each non-degenerate quadratic form
$p$
%%% (
with
%%% $\ddim p =\ddim q$
$\rk p =\rk q$
%%% )
one can associate the algebraic $G$-torsor $X$ represented by
the functor
%0%0 $\Iso(q\row p)$
$\Iso(p\row q)$. It is known that each algebraic $G$-torsor can be obtained
this
way
%%% ,
and that $p$ can be recovered from $X$ and $G$-action.

%%%%%%%%%%%%%%%%%%%%%%%%%%%%%%%%%%%%%%%%%%%%%%%%%%%%%%%%%%%%%%%%%%%%%
\subsection{Torsor
%%% classification
classificators
on
%%% the
pre-homotopic level}
\label{TorsorsClassificationOnSpaceLevelSS}

Let
%%% site
$\TTT$ be a site represented by the category $\smF{S}$ with
%%%
the
Nisnevich topology
%%% ,
and
$\Shv/\TTT$ be the category of sheaves of
%%% spaces
sets
on $\TTT$. Our category of spaces $\Spc$
is defined as the category of simplicial objects in $\Shv/\TTT$.

We
%%% will
identify the group $G$ with the respective representable sheaf of groups,
that is,
with
%%% the
a
group in $\Shv/\TTT$, and also with the constant group in $\Spc$.
In addition, if $X$ is a $G$-scheme, then the corresponding sheaf in $\Shv/\TTT$ and the
space $\Spc$ inherit the action of $G$. At the same time, it is not true that, this way,
an algebraic torsor
%%% will
always
produces
a torsor. To clarify this
%%% ,
%%% let us
we
recall the definition of
torsor in
%%% the
a
Grothendieck
topos $\EEE$.

\begin{defi}
\label{FormalTorsor-Torsor}
An object $P\in \EEE$ together with
%%% the
an
action $\alpha: G\times P \to P$ is called a "formal $G$-torsor"
if $(\alpha, \pi_P):G\times P \to P\times P$ is an isomorphism, where $\pi_P$ is the projection.
A formal torsor $(P, \alpha)$ is called a "torsor" if the unique map $P \to 1$ to the final object
is
%%% surjective.
an epimorphism.
\end{defi}

Clearly, each algebraic $G$-torsor $X$ induces formal $G$-torsors (also denoted as $X$) in
$\Shv/\TTT$ and $\Spc$. Setting aside the question of which formal torsors can be obtained this
way, we mention only that the replacement of an algebraic torsor by a formal one
%%% does not
looses
%%% any
no information in the sense that it does not glue objects (is a conservative functor).

%%%%%%%%%%%%%%%%%%%%%%%%%%%%%%%%%%%%%%%%%%%%%%%%%%%%%%%%%%%%%%%%%%%%%
\subsubsection{Example.}
\label{Example200}
%%% Let us
We
continue with the example \ref{Example100}. Assume that $p$ is not isomorphic to $q$ over $\Field$.
That means that $X(\Field)=\emptyset$. Moreover, in the Nisnevich topology, any covering $\{U_i\}$
of $S$ has a section, and so $\prod \Gamma(U_i, X) = \emptyset$.
Thus, the arrow $X \to S$ is not an epimorphism
%%% ,
and $X$ is not a torsor.

%%%%%%%%%%%%%%%%%%%%%%%%%%%%%%%%%%%%%%%%%%%%%%%%%%%%
\subsubsection{}
Let $X \in \EEE$. Since morphisms in
%%%
toposes posses a mono-epi decomposition, one has a well-defined
notion of the image of
%%% an object under
any morphism
%%% ,
and we can set
$$
\Supp X = \Img p_X,
$$
where $p_X: X \to 1$ is
%%% the
%0%0 a
the unique map to the final object.
%%%
The object $\Supp X$ is called the support of $X$.

Let $P$ be
a formal $G$-torsor.
We can restrict $G$ and $P$ to
$\Supp P$.
In other words,
we
consider
$G$ and $P$
%%% these
in the topos
$\EEE/\Supp P$. Here $P$ becomes a torsor.
%%%
Below we
%%% will
show that $\Supp P$ is a very interesting invariant of $P$.

%%% In the topos of $\Spc$ the $\Supp P$ will be represented by a subsheaf of a constant one-point sheaf.

%%%%%%%%%%%%%%%%%%%%%%%%%%%%%%%%%%%%%%%%%%%%%%%%%%%%
\subsubsection{}
In the category of sets the classificator of $G$-torsors on the pre-homotopic level is represented by the topos of $G$-sets.
%%% and is denoted $\BB G$.
In a more complicated topos the classificator of torsors
should be represented by something
like an internal topos.
Then the torsor $P$ would induce an arrow
%%%
from
%%% $\Supp P \to \BB G$
$\Supp P$ to this classificator.
%%%
It would be exactly the arrow
we need. But, at this stage, we would like
to avoid such constructions. Hopefully, this is possible
%%% ,
since in \cite{MV} the classificator
was constructed on the homotopic level.

%%%%%%%%%%%%%%%%%%%%%%%%%%%%%%%%%%%%%%%%%%%%%%%%%%%%%%%%%%%%%%%%%%%%%
\subsection{Torsor classificators
on $\mathcal H_s$-level}
\label{TorsorsClassificationOnHsLevelSS}
The homotopic category in the algebro-geometric context was introduced by F.\,Morel and V.\,Voevodsky
in \cite{MV}. Among other things, their results permit to describe torsors as homotopy classes of maps
to the classificator (which they produced). These results form an indispensable part of our approach as
well.

The homotopic category $\hs(\TTT)$ is obtained from the category of spaces
$\Spc$ via localization with respect to the class of weak equivalences $W_s$
(\cite[Def 2.1.2, p. 48]{MV}).
A morphism of spaces $f:U\row V$ belongs to $W_s$ if and only if
%%% ,
for each point $e$ of the generalized
space (that is, {\it topos})
%%% ${\bf E}={\bf Shv}/\TTT$
$\EEE ={\Shv}/\TTT$
%%% ,
the morphism of fibers
$f_e:U_e\row V_e$ is a weak equivalence of simplicial sets.
Here the point $e$ of the topos
%%% ${\bf E}$,
$\EEE$,
that is,
%%% the image
a morphism
%%% of
from
the usual point
%%% ${\bullet}$
$\sbt$
%%% in
to
%%% ${\bf E}$,
$\EEE$,
by definition, is represented by the
%%%pull-back functor
pullback functor
%%% $e^*:{\Shv}/\TTT\row {\Shv}/\bullet$, where the
$e^*:{\Shv}/\TTT\row {\Shv}/\sbt$, where the
latter category is
%%% identified with
the category of sets.
In
%%% the given work
this paper
we
%%% will not
need
%% an
no
explicit description of points of the generalized space
%%% ${\bf Shv}(\smF{S})_{Nis}$.
${\Shv}(\smF{S})_{Nis}$.
We only mention
that with each choice of a scheme $X\in\smF{S}$ together with a point $x\in X$ one can associate a
point $e$ of the topos
%%% ${\bf E}$
$\EEE$. Then $e^*(F)$ is the fiber of $F$ in the hensilization of $x$.
As far as we
%%% know,
understand
this construction gives
%%%
essentially
all
%%%
the
points of
%%% ${\bf E}$
$\EEE$.

The class $W_s$ is a part of the simplicial model structure $(C_s, W_s, F_s)$ (see \cite[p. 48]{MV}).
%%% Cofibrations $C_s$
The cofibrations, that is, the elements of $C_s$,
%%% --
are just embeddings. Thus, any object of $\Spc$ is cofibrant.
%%% Fibrations
The fibrations
are defined via the lifting property
%%% w.r.to
w. r. to
the
acyclic cofibrations.

%%%%%%%%%%%%%%%%%%%%%%%%%%%%%%%%%%%%%%%%%%%%%%%%%%%%
\subsubsection{}
\label{HotModelForSupportSS}
With each smooth $S$-scheme $X$ one can associate
%%% the
a
space
%%% $E(X)$
$EX$
(see \cite[p. 9]{VVMilnorConj}
%%% , \cite[App. B]{VoMil},
and
\cite[Exa 4.1.11]{MV}).
%0%0 \cite[Lem 1.12 and around]{MV}).
By definition $(EX)_n=X^{n+1}$ (products over $S$), with faces and degeneration maps given
by partial projections and partial diagonals.
%%% (in
In other words, we connect by a segment (even by two)
each pair of point, glue up all triangles, etc.
%%).

%%%%%%%%%%%%%%%%%%%%%%%%%%%%%%%%%%%%%%%%%%%%%%%%%%%%
\subsubsection{}
\label{sssHsCl1}
%%% The
An
$\hs$-classificator of $G$-torsors
%%%
$\bzar{G}$
%%% was
is
constructed by F.\,Morel and V.\,Voevodsky in \cite{MV}.
The space $\bzar{G}$
%%% was
is
introduced together with
%%% the
an
isomorphism of functors
$B_\bullet \mapsto P(B_\bullet,G)$ and $B_\bullet \mapsto \hs(B_\bullet, \bzar{G})$,
where $P(B_\bullet,G)$
%%% --
is the set of isomorphism classes of torsors over $B_\bullet\in \Spc$.
For $U\in \TTT$, by definition, $\bzar{G}(U)$
%%% --
is the nerve of the category having one object and the
group $G(U)$ of arrows.
Thus, the space $\bzar{G}$ is represented by the simplicial scheme with
%%% $\bzar{G}_n=G^n$
$\bzar{G}_{n+1}=G^n$
and
%%%
the
standard faces and degenerations maps.
To complete the description of the classificator it remains to specify the morphism of functors
$P(B_\bullet,G) \to \hs(B_\bullet, \bzar{G})$.
For this one needs the space $EG$ (see \ref{HotModelForSupportSS})
as well as the morphism of spaces $\ezar{G} \to \bzar{G}$,
$$
(g_0,\dots,g_n)\mapsto (g_n^{-1}g_{n-1}, \dots, g_2^{-1}g_1,g_1^{-1}g_0).
$$
This map identifies the spaces $G\backslash \ezar{G}$ and $\bzar{G}$.
In \cite[proof of Lemma 4.1.12]{MV}, to each torsor $P_\bullet$ over $B_\bullet$, a hat
$B_\bullet \xleftarrow{p}\tilde{P_\bullet}\xrightarrow{f}\bzar{G}$ is assigned
where $\tilde{P_\bullet}= (\ezar{G} \times_G P_\bullet)$.
The correspondence $P_\bullet \mapsto f\circ p^{-1}$ provides the needed $\hs$-arrow
$B_\bullet \to \bzar{G}$.
%% This induces an isomorphism of the functors.
%%% :

%%%%%%%%%%%%%%%%%%%%%%%%%%%%%%%%%%%%%%%%%%%%%%%%%%%%%%%%%%%%%%%%%%%%%%%%%%%%%%%%%%%%%%%%
\begin{thm}{\rm (F.\,Morel, V.\,Voevodsky, \cite[Proposition 4.1.15]{MV})}
\label{MV-theorem}
The above correspondence defines a bijection
$$
P(B_\bullet,G)\cong\hs(B_\bullet, \bzar{G}).
$$
\end{thm}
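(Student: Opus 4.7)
The plan is to verify that the correspondence $P_\bullet \mapsto f \circ p^{-1}$ described just before the statement is a bijection, by producing an explicit inverse via pullback of a universal torsor and then checking the two procedures are mutually inverse.

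First I would establish that $\ezar{G} \to \bzar{G}$ is itself a (formal) $G$-torsor in $\Spc$ and that the source is simplicially contractible. Contractibility reduces, since $W_s$ is detected on points of the topos $\EEE$, to the classical fact that the simplicial set $EH$ associated with a discrete group $H$ (here the stalk $G_e$) is contractible via the standard extra degeneracy $(h_0,\dots,h_n) \mapsto (e,h_0,\dots,h_n)$. The torsor property was essentially noted in \ref{sssHsCl1}: the action of $G$ on $\ezar{G}$ by right translation in the first coordinate yields $\bzar{G}=G\backslash\ezar{G}$, and the quotient map is locally trivial for the Nisnevich topology because on representable test objects the relevant cocycle degenerates.

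Next I would construct the inverse map $\hs(B_\bullet,\bzar{G}) \to P(B_\bullet,G)$. Any $\hs$-morphism $B_\bullet \to \bzar{G}$ can be represented by a hat $B_\bullet \xleftarrow{p} \wt{B}_\bullet \xrightarrow{f} \bzar{G}$ with $p \in W_s$. Pulling back the universal torsor along $f$ produces a formal $G$-torsor $f^*\ezar{G}$ on $\wt{B}_\bullet$; composing the projection $f^*\ezar{G} \to \wt{B}_\bullet$ with $p$ gives an object over $B_\bullet$ whose torsor structure descends up to isomorphism because $p$ is a weak equivalence and torsor-triviality is a stalkwise condition. Independence on the choice of hat and on the representative $f$ in its $W_s$-class follows from the fact that the simplicial model structure $(C_s,W_s,F_s)$ turns $\ezar{G} \to \bzar{G}$ into a fibration (one can check the lifting property against the cofibrations appearing in a cylinder or path object), so homotopies lift to torsor isomorphisms.

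Finally I would check the two constructions are mutually inverse. Starting from a torsor $P_\bullet$, the hat $B_\bullet \xleftarrow{p} \wt{P}_\bullet = \ezar{G} \times_G P_\bullet \xrightarrow{f} \bzar{G}$ has the property that $p$ is a weak equivalence (its fibers are copies of $\ezar{G}$, which is contractible) and $f^*\ezar{G} \cong \wt{P}_\bullet \times_{\bzar{G}} \ezar{G}$ is $G$-equivariantly isomorphic to $\ezar{G} \times P_\bullet$ over $\wt{P}_\bullet$; descending along $p$ one recovers $P_\bullet$. Conversely, starting from $(p,f)$, the torsor $f^*\ezar{G}$ viewed through the above construction reproduces the same hat up to a zig-zag of weak equivalences, hence the same class in $\hs(B_\bullet,\bzar{G})$.

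The main obstacle is the descent step: one has to know that isomorphism classes of formal $G$-torsors are invariant under $W_s$ on the base. This rests on the fact that $\ezar{G} \to \bzar{G}$ is a genuine simplicial fibration (so pullbacks along weakly equivalent maps are weakly equivalent) together with Nisnevich-local triviality, which lets one promote a stalkwise isomorphism of formal torsors to a global one. Everything else is diagram chasing inside the model category, but this local-to-global step is where the Nisnevich choice of $\TTT$ and the specific construction of $\bzar{G}$ genuinely enter.
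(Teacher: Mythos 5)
First, a point of comparison: the paper does not prove this statement. It is quoted verbatim from Morel--Voevodsky (\cite[Proposition 4.1.15]{MV}), and the text preceding it only records the construction of the correspondence $P_\bullet\mapsto f\circ p^{-1}$. So there is no in-paper argument to measure yours against; what you have sketched is, in outline, the Morel--Voevodsky proof itself (universal torsor $\ezar{G}\to\bzar{G}$, contractibility of $\ezar{G}$ checked on points, representation of $\hs$-maps by hats, descent back to $B_\bullet$).

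That said, the step on which the whole argument hinges is stated in a form that would fail, and the correct form is asserted rather than proved. Composing the projection $f^*\ezar{G}\to\wt{B}_\bullet$ with $p$ does not produce a $G$-torsor over $B_\bullet$: the fibers of the composite are $G$ times the fibers of $p$, and a torsor structure cannot ``descend'' merely because $p$ is a weak equivalence and triviality is a stalkwise condition. What is actually needed is that $p^*\colon P(B_\bullet,G)\to P(\wt{B}_\bullet,G)$ is a bijection for the maps $p$ occurring in hats (which, by \cite[Prop.\ 2.1.13]{MV}, may be taken to be trivial local fibrations); the inverse correspondence is then $[f]\mapsto (p^*)^{-1}[f^*\ezar{G}]$. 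Your final paragraph correctly identifies this $W_s$-invariance of $P(-,G)$ as the crux, but the justification offered --- that $\ezar{G}\to\bzar{G}$ is ``a genuine simplicial fibration'' --- is both unjustified in the injective local model structure (the map is a local, i.e.\ stalkwise, fibration, but there is no reason for it to have the lifting property against all acyclic monomorphisms) and beside the point: surjectivity of $p^*$ concerns descending an \emph{arbitrary} torsor on $\wt{B}_\bullet$ to $B_\bullet$, not the universal one. Establishing it requires genuine descent for torsors along a trivial local fibration (local sections of $p$ together with effectivity of the resulting descent datum), and this is precisely the local-to-global content of \cite[\S 4.1]{MV} that your sketch omits. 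The remaining steps --- contractibility of $\ezar{G}$ via the extra degeneracy on stalks, the $G$-equivariant identification $f^*\ezar{G}\cong\ezar{G}\times P_\bullet$ over $\wt{P}_\bullet$, and well-definedness on homotopy classes --- are correct and routine.
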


%%%%%%%%%%%%%%%%%%%%%%%%%%%%%%%%%%%%%%%%%%%%%%%%%%%%

\begin{prop}
\label{HotModelForSuppProp}
There exists
%%%
a
unique $\Spc$-arrow $p: EX \to \Supp X$. This arrow is a weak equivalence.
Thus, $EX$
%%% --
is a homotopic model for the support of $X$.
\end{prop}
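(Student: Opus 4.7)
The plan is to first produce $p$ and establish its uniqueness, then verify that $p$ is a weak equivalence by checking it stalk by stalk at every point of the topos $\EEE = \Shv/\TTT$.

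For existence and uniqueness, I would exploit that $\Supp X = \Img p_X$ is, by definition, a subobject of the terminal object $1$, hence a subterminal sheaf. Regarded as a (constant) simplicial sheaf, $\Supp X$ receives at most one morphism from any given space, and it receives one iff the canonical map to $1$ factors through $\Supp X$ at each simplicial level. For every $n$, any projection $(EX)_n = X^{n+1} \to X$ factors through $\Supp X$ by the very definition of the image, so $(EX)_n \to 1$ factors (uniquely) through $\Supp X \hookrightarrow 1$; these levelwise factorizations are automatically compatible with all faces and degeneracies since $\Supp X \hookrightarrow 1$ is a monomorphism. This yields the unique arrow $p : EX \to \Supp X$.

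To show that $p$ is a weak equivalence, I would invoke the pointwise criterion for $W_s$ recalled in the excerpt: it suffices to check that $p_e : (EX)_e \to (\Supp X)_e$ is a weak equivalence of simplicial sets for every point $e$ of $\EEE$. Since $e^*$ preserves finite limits and colimits---in particular finite products and images---one has $(EX)_e$ equal to the simplicial set with $n$-simplices $(e^*X)^{n+1}$ and the obvious projection faces and diagonal degeneracies, while $(\Supp X)_e = \Img(e^*X \to \{*\})$.

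A short case analysis then concludes. If $e^*X = \emptyset$, both $(EX)_e$ and $(\Supp X)_e$ are empty and $p_e$ is trivially a weak equivalence. If $e^*X$ is non-empty, then $(\Supp X)_e$ is a one-point simplicial set and $(EX)_e$ is the nerve of the indiscrete (``codiscrete'') groupoid on the non-empty set $e^*X$; choosing any element $s \in e^*X$ provides an explicit simplicial contraction onto the constant simplex at $s$, so $(EX)_e$ is contractible and $p_e$ is again a weak equivalence. No single step is difficult; the only places where care is needed are the compatibility of the levelwise factorizations in the construction of $p$, and the simplicial contractibility of the indiscrete-groupoid nerve in the non-empty-stalk case.
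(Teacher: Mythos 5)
Your proof is correct and takes essentially the same route as the paper: existence is obtained levelwise using the fact that $X^{n+1}$ maps canonically to $\Supp X$ (the paper phrases this via $\Supp(X\times Y)=\Supp X\times\Supp Y$, you via the projection $X^{n+1}\to X$, but they yield the same arrow), uniqueness comes from $\Supp X$ being subterminal, and the weak equivalence is checked stalkwise with the reduction to contractibility of $ES$ for a non-empty set $S$. The additional details you supply---preservation of images by $e^*$ and the explicit handling of the empty-stalk case---are the right ones to fill in and are both valid.
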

\begin{proof}
The existence
is the consequence of the locality of the product
%%% ($\Supp(X\times Y)\subset\Supp(X)$),
($\Supp(X\times Y) = (\Supp X)\times (\Supp Y)$),
while
%%%
the uniqueness follows from the
injectivity of the
%%%
embedding
$\Supp X \subset 1$.
%%%
To check that
$p\in W_s$ it is sufficient to consider the fibers, where everything is reduced to the fact that, for
a non-empty set $X$, the simplicial set $EX$ is weakly equivalent to the point.
\end{proof}

%%%%%%%%%%%%%%%%%%%%%%%%%%%%%%%%%%%%%%%%%%%%%%%%%%%%
\subsubsection{}
\label{ChechSpaceAndCosceleton} Notice, that $E=\cosk_n$ for $n=0$.
%(see \cite[p. 32]{GM}).
The functor $\cosk_n$ is the right adjoint to the truncation $i_*$ from the category of simplicial
objects to the category of $n$-bounded simplicial objects and should be denoted $i^!$.
%(not to mess with $i^*$).
For $n=0$ the truncation has the form $Z_\bullet \mapsto Z_0$, and the adjointness means that
$\Hom(Z_\bullet, EX) = \Hom(Z_0,X)$.

%%%%%%%%%%%%%%%%%%%%%%%%%%%%%%%%%%%%%%%%%%%%%%%%%%%%
\subsubsection{}
\label{EXIsFiberedSSS}
The space $EX$ is locally fibrant by \cite[Lemma 2.1.15]{MV}, that is, fibrant in each fiber.
Indeed, if $t$ is a point, then $(EX)_t = E(X_t)$ is Kan's set, since there everything is glued up
tautalogically.
More accurate argument uses the adjointness from \ref{ChechSpaceAndCosceleton}.
%%%%%%%%%%%%%%%%%%%%%%%%%%%%%%%%%%%%%%%%%%%%%%%%%%%%
\subsubsection{}
\label{SFinalInHsSSS}
$S$ is the final object of $\mathcal H_s$.
Indeed, in $\Spc$ each object is cofibrant, while the final one is fibrant. Hence, $\mathcal H_s$-arrows
to $S$ are obtained as a quotient of $\mathcal \Spc$-arrows to $S$, that is, as a quotient of the one-element set.

%%%%%%%%%%%%%%%%%%%%%%%%%%%%%%%%%%%%%%%%%%%%%%%%%%%%
\subsubsection{}
\label{HsArrowsToLocallyFiberedSpaceSSS}
Let $Y_\bullet$ be locally fibrant. Then each $\mathcal H_s$-arrow $f: T_\bullet \to Y_\bullet$ can be
represented by some $\mathcal H_s$-composition $f=q\circ p^{-1}$ corresponding to a hat
$T_\bullet \xleftarrow{p} \tilde T_\bullet\xrightarrow{q}Y_\bullet$,
where $p$ is a local fibration and a weak equivalence. In such a situation, $f$ depends only on the
classes of $p$ and $q$ w.r.to the $s$-homotopic equivalence.
Moreover, for two $\mathcal H_s$-arrows $T_\bullet \to Y_\bullet$ one can find hats with the common
$\tilde T_\bullet$ and $p$ \cite[Prop. 2.1.13]{MV}.

%%%%%%%%%%%%%%%%%%%%%%%%%%%%%%%%%%%%%%%%%%%%%%%%%%%%%%%%%%%%%%%%%%%%%%%%%%%%%%%%%%%%%%%%
\subsubsection
{$EX$ is a subobject of $S$ in $\mathcal H_s$}.
\label{EXIsSubPointinsHSSS}
In other words, the unique (see \ref{SFinalInHsSSS}) $\mathcal H_s$-arrow $EX \to S$ is a monomorphism.
One needs to check that, for arbitrary space
$T_\bullet$, the set $\mathcal H_s(T_\bullet, EX)$
consists of at most one element.

Indeed, in the light of (\ref{EXIsFiberedSSS}) and (\ref{HsArrowsToLocallyFiberedSpaceSSS}), it is
sufficient to connect arrows $f,g: T_\bullet \to EX$ by an $s$-homotopy $h$.
But inside $EX$ everything is glued up canonically and $h$ is constructed tautologically.
More accurate argument uses the adjointness from \ref{ChechSpaceAndCosceleton}.

%%%%%%%%%%%%%%%%%%%%%%%%%%%%%%%%%%%%%%%%%%%%%%%%%%%%%%%%%%%%%%%%%%%%%%%%%%%%%%%%%%%%%%%%
\subsubsection{}
\label{HsArrowsFromEYToEXSSS}
Let $Y$ be a smooth
scheme
over $S$, where $S=\Spec\Field$.
Then the following conditions are equivalent (\cite[Lemma 3.8, Remark after Lemma 3.8]{VVMilnorConj}):
\begin{equation}
\label{DominantProperty}
\text{ for any extension of fields }L/\Field:\quad
Y(L) \neq \emptyset \Rightarrow X(L)\neq \emptyset;
\end{equation}
\begin{equation}
\label{HsArrowsFromEYToEXNonEmptyProperty}
\mathcal H_s(EY, EX)\neq \emptyset;
\end{equation}

For irreducible $Y$, the condition
(\ref{DominantProperty}) is equivalent to: $X(L)\neq \emptyset$ for $L = \Field(Y)$.

%%%%%%%%%%%%%%%%%%%%%%%%%%%%%%%%%%%%%%%%%%%%%%%%%%%%%%%%%%%%%%%%%%%%%%%%%%%%%%%%%%%%%%%%
\subsubsection{}
\label{HsIsoOfEYToEXSSS}
It follows from (\ref{HsArrowsFromEYToEXSSS}) that the following
conditions are equivalent:
\begin{equation}
\label{IsoProperty}
\text{ for each extension of fields }L/\Field:\quad
Y(L) \neq \emptyset \Leftrightarrow X(L)\neq \emptyset;
\end{equation}
\begin{equation}
\label{HsIsoFromEYToEXNonEmptyProperty}
EY \simeq EX \text{ in }\mathcal H_s.
\end{equation}
In particular, the projection $EX \row S$ is an
 $\mathcal H_s$-isomorphism if and only if $X(\Field)\neq \emptyset$.

%%%%%%%%%%%%%%%%%%%%%%%%%%%%%%%%%%%%%%%%%%%%%%%%%%%%
\subsubsection{}
\label{ClasMap}
Any algebraic $G$-torsor $P$ induces a $G$-torsor over $\Supp P$ and so
(see \ref{MV-theorem}),
an $\hs$-arrow $i_P: \Supp P \to BG$.
The pair $(\Supp P, i_P)$ is exactly the invariant of $P$ we are looking for. The knowledge of this
invariant permits to recover $P$.

There are explicit $\mathcal H_s$-models for the $\Supp P$ (see \ref{HotModelForSuppProp}) and the classificator, namely, $EP$ and $BG$.
Therefore $i_P$ induces the $\mathcal H_s$-arrow between them.
It appears that this arrow can be constructed explicitly already on the level of spaces.
Namely, consider the map
$f_P: EP \to BG, (x_0, \dots, x_n) \mapsto (x_0x_1^{-1}, x_1x_2^{-1},\dots, x_{n-1}x_n^{-1})$.

%%%%%%%%%%%%%%%%%%%%%%%%%%%%%%%%%%%%%%%%%%%%%%%%%%%%%%%%%%%%%%%%%

\begin{prop} The following $\mathcal H_s$-diagram is commutative
(the arrow $p$ is described in \ref{HotModelForSuppProp}):
$$
\xymatrix{
EP  \ar[rr]^{p}\ar[dr]_{f_P}  &  & \Supp P \ar[dl]^{i_P}\\
  & BG &
}.
$$
\end{prop}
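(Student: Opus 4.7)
The plan is to present both sides of the triangle by hats over a common space. By \ref{sssHsCl1} the classifying arrow $i_P$ is represented by the hat $\Supp P\xleftarrow{p'}\tilde P\xrightarrow{f'}BG$ with $\tilde P=EG\times_G P$, while $f_P$ composed with $p^{-1}$ gives the hat $\Supp P\xleftarrow{p}EP\xrightarrow{f_P}BG$ (legitimate because $p$ is an $\mathcal H_s$-isomorphism by Proposition \ref{HotModelForSuppProp}). It therefore suffices to produce an isomorphism of spaces $\psi:\tilde P\to EP$ making both $p\circ\psi=p'$ and $f_P\circ\psi=f'$ hold already at the level of $\Spc$.

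For $\psi$ I would take the simplicial map induced in degree $n$ by $G^{n+1}\times P\to P^{n+1}$, $((g_0,\dots,g_n),x)\mapsto(g_0x,\dots,g_nx)$. It is $G$-invariant for the diagonal action (right translation on $EG$ and the inverse of the given action on $P$), so it factors through $\tilde P$. The torsor identification $(\alpha,\pi_P):G\times P\xrightarrow{\sim}P\times P$ of Definition \ref{AlgTorsor} then shows that the factored map is a levelwise isomorphism; compatibility with faces and degeneracies is automatic since both $E(-)$-constructions are built out of partial projections and diagonals.

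Next I would check the two compatibilities by inspection. Support side: $p'$ is the canonical map $\tilde P\to \Supp \tilde P$; since $\psi$ is an isomorphism, $\Supp\tilde P=\Supp EP=\Supp P$, and $p\circ\psi=p'$ by uniqueness of the canonical map to the support. Classifying side: given a lift $(g_0,\dots,g_n;x)$ and setting $x_i=g_ix$, the unique element of $G$ carrying $x_i$ to $x_{i-1}$, namely $x_{i-1}x_i^{-1}$, equals $g_{i-1}g_i^{-1}$; substituting this identity entry-by-entry into the formula for $f'$ recalled in \ref{sssHsCl1} identifies it with the formula for $f_P$.

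The one real obstacle is bookkeeping rather than content: one must reconcile the left/right action conventions used in the Morel--Voevodsky formula for $EG\to BG$ in \ref{sssHsCl1} with the convention used for $f_P$, and keep track of the order of entries (which is reversed between the two formulas). Once these conventions are aligned, the equality of the two hats gives the required triangle identity in $\mathcal H_s$.
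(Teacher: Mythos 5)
Your proof is essentially correct but takes a different route from the paper's. The paper invokes the Morel--Voevodsky bijection of Theorem \ref{MV-theorem}: since $\hs(EP,\bzar{G})$ is in bijection with isomorphism classes of torsors over $EP$, it suffices to identify the torsor classified by $i_P\circ p$, namely $p^*P=EP\times P$, with the one classified by $f_P$, namely $f_P^*\ezar{G}=EP\times_{\bzar{G}}\ezar{G}$, and the paper writes down that isomorphism directly. You instead unwind the \emph{construction} of $i_P$ from \ref{sssHsCl1} as the hat through $\tilde P=\ezar{G}\times_G P$ and identify that hat, already in $\Spc$, with the hat $\Supp P\xleftarrow{p}EP\xrightarrow{f_P}\bzar{G}$ via the isomorphism $G\backslash(\ezar{G}\times P)\cong EP$ --- which is exactly Proposition \ref{Ghiq} of the paper, stated there immediately after this Proposition. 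Your route avoids quoting the injectivity half of the classification theorem (you only use how the classifying map is built), at the cost of having to know the explicit hat; both proofs ultimately reduce to the same one-line formula check, and your observation that the support-side compatibility is automatic (any two maps to the subobject $\Supp P\subset 1$ coincide) is a clean point. One caveat: your explicit formula $(g_0,\dots,g_n;x)\mapsto(g_0x,\dots,g_nx)$, equivariant for right translation on $\ezar{G}$, is not compatible with the paper's conventions, where $\bzar{G}=G\backslash\ezar{G}$ is the \emph{left} quotient and the map $\ezar{G}\to\bzar{G}$ is $(g_0,\dots,g_n)\mapsto(g_n^{-1}g_{n-1},\dots,g_1^{-1}g_0)$. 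You flag this as bookkeeping, and indeed it is: with the paper's map $(g_0,\dots,g_n;x)\mapsto(g_n^{-1}x,\dots,g_0^{-1}x)$ from Proposition \ref{Ghiq}, setting $x_i=g_{n-i}^{-1}x$ gives $x_{i-1}x_i^{-1}=g_{n-i+1}^{-1}g_{n-i}$, so $f_P\circ\psi$ agrees entry-by-entry with the composite $\tilde P\to\ezar{G}\to\bzar{G}$. With that substitution your argument closes completely.
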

\begin{proof} Due to
Theorem of F.\,Morel
and
V.\,Voevodsky (see \ref{MV-theorem}) it is sufficient to show that
$p^*P \simeq f_P^*EG$. Here $p^*P = EP\times P$, and the needed isomorphism of torsors is
given by:
$EP\times P \to EP \times_{BG} EG$:
$(x_0, \dots, x_n)\times x \mapsto (x_0, \dots, x_n)\times (x_nx^{-1}, \dots, x_0x^{-1})$.
\end{proof}

%%%%%%%%%%%%%%%%%%%%%%%%%%%%%%%%%%%%%%%%%%%%%%%%%%%%%%%%%%

\begin{prop}
\label{Ghiq}
Let $X$ be a $G$-torsor. Then in $\Spc$ we have: $G\backslash(\ezar{G}\times X)=EX$.
\end{prop}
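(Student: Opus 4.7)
The plan is to exhibit an explicit levelwise map $\ezar{G}\times X\to EX$ and recognise it as the sheaf quotient by the diagonal $G$-action. At simplicial level $n$, $(\ezar{G}\times X)_n=G^{n+1}\times X$ carries the diagonal $G$-action, while $(EX)_n=X^{n+1}$ is $G$-trivial; the natural candidate is
$$
\phi_n\colon G^{n+1}\times X\longrightarrow X^{n+1},\qquad (g_0,\dots,g_n,x)\longmapsto (g_0^{-1}x,\dots,g_n^{-1}x).
$$
This is manifestly $G$-invariant, and it visibly commutes with the partial-projection face maps and partial-diagonal degeneracies on both sides, so the $\phi_n$ assemble into a morphism in $\Spc$.

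The heart of the argument is the torsor condition restated as the shear isomorphism $G\times X\xrightarrow{\sim}X\times X$, $(g,x)\mapsto(g^{-1}x,x)$. Applying it simultaneously in each of the first $n+1$ factors yields an isomorphism
$$
G^{n+1}\times X\xrightarrow{\sim}X^{n+1}\times X,\qquad(g_0,\dots,g_n,x)\longmapsto(g_0^{-1}x,\dots,g_n^{-1}x,x),
$$
under which the diagonal $G$-action on the source transports to the action on $X^{n+1}\times X$ that is trivial on the first $n+1$ factors and equals the given torsor action on the last $X$. Hence the $G$-quotient reduces to computing $G\backslash X$ on the last factor.

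The one subtlety is that $G\backslash X$ is \emph{not} $S$ in general: as noticed in Example~\ref{Example200}, $X$ may fail to be a sheaf-epimorphism onto $S$, and the sheaf quotient of $X$ by the free torsor action equals precisely $\Supp X\subseteq S$. But this causes no trouble here, because $X^{n+1}\to S$ factors through $\Supp X$, and so the fibre product collapses:
$$
G\backslash(X^{n+1}\times X)=X^{n+1}\times_S(G\backslash X)=X^{n+1}\times_S\Supp X=X^{n+1}=(EX)_n.
$$
Thus $\phi_n$ is a sheaf quotient map at every simplicial level, and the levelwise identifications are compatible with faces and degeneracies, yielding $G\backslash(\ezar{G}\times X)=EX$ in $\Spc$.

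The main bookkeeping hazard is keeping the $\Supp X\neq S$ distinction straight while still recovering $X^{n+1}$ on the nose; if one wanted to sidestep this, an alternative is a stalkwise verification, separately handling points of the topos at which $X$ has a section (where freeness gives orbit-equals-fibre and surjectivity of $\phi_n$) and those at which $X$ is empty (where both source and target of $\phi_n$ are empty), which gives the same conclusion.
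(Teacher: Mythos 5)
Your proof is correct and takes essentially the same route as the paper, whose entire argument is to exhibit the map $(g_0,\ldots,g_n,x)\mapsto(g_n^{-1}x,\ldots,g_0^{-1}x)$ and assert it is an isomorphism of simplicial schemes. You supply the details the paper leaves implicit --- the levelwise shear isomorphism coming from the torsor condition and the observation that $G\backslash X=\Supp X$ collapses against $X^{n+1}$ --- and your handling of the $\Supp X\neq S$ subtlety is exactly right.
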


\begin{proof}
It is easy to see that the map
$G\backslash(\ezar{G}\times X)\row EX$ defined by:
$(g_0,\ldots,g_n,x)\mapsto (g_n^{-1}x,\ldots,g_0^{-1}x)$
is an isomorphism of simplicial schemes.
\end{proof}

Below
$EX$
can be denoted
as $\hiq{X}$ (which corresponds to the notations of \cite{Vo2MC}
and \cite{IMQ}).

%%%%%%%%%%%%%%%%%%%%%%%%%%%%%%%%%%%%%%%%%%%%%%%%%%%%%%%%%%
\subsection{Some remarks}
%%%%%%%%%%%%%%%%%%%%%%%%%%%%%%%%%%%%%%%%%%%%%%%%%%%
\subsubsection{}
To simplify computations it would be desirable to pass from
$\hs$
to the motivic homotopic
category of Morel--Voevodsky
$\hH$.
The category $\hH$ is the localization of $\hs$,
identifying $\aaa^1$ with the point (see \cite{MV}).
For computations in $\hH$ it is useful to keep in mind that
$\hH(X,Y)=\hs(X,Y)$
for $\aaa^1$-local $Y$
\cite[Theorem 2.3.2, p. 86]{MV}.

The $\hs$-type of $S$ is clearly $\aaa^1$-local as the final object of $\hs$ (see \ref{SFinalInHsSSS}).
Therefore, $S$ is also the final object of $\hH$.
The $\hs$-type of $EX$ is also $\aaa^1$-local, as a subobject of a local one
(see \ref{EXIsSubPointinsHSSS}).

Moreover, $EX$ is a subobject of $S$ in
%0%0 $\mathcal H$
$\hH$, that is, the unique
%0%0 $\mathcal H$-arrow
$\hH$-arrow $EX \to S$ is a monomorphism.
Indeed, for arbitrary
$Z_\bullet$, the set
%0%0 $\mathcal H(Z_\bullet, EX)$
$\hH(Z_\bullet, EX)$
consists of at most one element.
This follows from the $\aaa^1$-locality of $EX$ and the fact that $EX$ is a subobject of $S$ in
$\hs$ (see \ref{EXIsSubPointinsHSSS}).

In the statements \ref{HsArrowsFromEYToEXSSS} and \ref{HsIsoOfEYToEXSSS} one can replace
$\hs$ by $\hH$.
This follows from the $\aaa^1$-locality of $EX$.

%%%%%%%%%%%%%%%%%%%%%%%%%%%%%%%%%%%%%%%%%%%%%%%%%%%%%%%%%%%%%%%%%%%%%%%%%%%%%%%%%%%%%%%%%%%%%%%%%%%%%%%
\subsection{Nisnevich and \'etale classifying spaces}
%%%%%%%%%%%%%%%%%%%%%%%%%%%%%%%%%%%%%%%%%%%%%%%%%%%%%%%%%%%%%%%%%%%%%%%%%%%%%%%%%%%%%%%%%%%%%%%%%%%%%

\subsubsection{}
\label{OtherTopologiesSSS}
Everything
said above about the classification of torsors in
the Nisnevich topology,
can be also applied to other topologies, in particular, to the \'etale one.
This way, we get
a
classificator in
$\hs(\TTT)$ for the respective site.
As
a family
of classificators and supports appears, the corresponding notations should
contain references to $\TTT$.

For the \'etale topology, $\betO{G}\in\hs((\smF{S})_{\text{et}})$.
But, in \cite{MV} the notation $\betO{G}$ denotes something else, namely,
$R\pi_*(\betO{G})$, where
$$
\xymatrix {
\hs((\smF{S})_{et}) \ar@<0.5ex>[d]^{{R\pi_*}}\\
\hs((\smF{S})_{Nis}) \ar@<0.5ex>[u]^{L\pi^*=\pi^*} \\
}
$$
is the pair of conjugate functors induced by the morphism of sites
$\pi:(\smF{S})_{et}\row (\smF{S})_{Nis}$.
Therefore, to avoid confusion, we set
$\bet{G} = R\pi_*(\betO{G})$.

Besides, the $\mathcal H_s$-type of $\bet{G}$ is defined in \cite{MV} slightly differently,
namely, as
$R\pi_*(\pi^*\bzar{G})$. The result is the same though, since
$\betO{G} = \pi^*(\bzar{G})$.

%%%%%%%%%%%%%%%%%%%%%%%%%%%%%%%%%%%%%%%%%%%%%%%%%%%%%%%%%%%%%%%%%%%%%%%%%%%%%%%
\subsubsection{}
\label{NisAndEtBAreDifferentSSS}
The $\hs$-types $\bzar{G}$ and $\bet{G}$ are quite different.
For example, there is only one $\hs$-arrow from $\Spec \Field$ to
$\bzar{G}$, while $\hs$-arrows $\Spec \Field \to \bet{G}$ correspond to
the isomorphism classes of algebraic $G$-torsors. For $G=O(n)$
it is the set of the isomorphism classes of $n$-dimensional
quadratic forms over $\Field$, which is
very non-trivial.

%%%%%%%%%%%%%%%%%%%%%%%%%%%%%%%%%%%%%%%%%%%%%%%%%%%%%%%%%%%%%%%%%%%%%%%%%%%%%%%%%%%%
\subsubsection{}
\label{ArrowFromBGToBGEtSSS}
The discussion of \ref{OtherTopologiesSSS} shows that the identity map on $\betO{G}$,
by adjunction,
%%% adjointness,
induces an $\hs$-arrow
$$
\varepsilon: \bzar{G}\to\bet{G}.
$$

Any algebraic $G$-torsor $P$ induces $\hH_s$-arrow $i_P: \Supp P \to \bzar{G}$ (see \ref{ClasMap}). On the other hand it induced an
$\hH_s((\smF{S})_{et})$-arrow
$i_{et,P}: S \to \betO{G}$ since the corresponding etale support coincides with $S$. By adjunction, we get the corresponding $\hH_s$-arrow
$i_{et/Nis,P}: S \to \bet{G}$. The arrows $i_P$ and $i_{et/Nis,P}$ are related by means of $\varepsilon$. Namely, the following $\hH_s$-diagram is commutative.
$$
\xymatrix {
\Supp P \ar[d]_{\subset}\ar[rr]^{i_P} & & \bzar{G}\ar[d]^{\varepsilon}\\
S \ar[rr]^{i_{et/Nis,P}}& & \bet{G} \\
}
$$

We know an $\hH_s$-model for $\Supp P$ (see \ref{ClasMap}). In addition, there are nice geometric $\hH$-models for
$\bet{G}$ (see \ref{ArrowFromBGToGeomBGSSS}).
We are going to relate these models by means of $\varepsilon$. Moreover, we are going to get a similar diagram not for an individual algebraic torsor, but rather for a family of such torsors.
%%%%%%%%%%%%%%%%%%%%%%%%%%%%%%%%%%%%%%%%%%%%%%%%%%%%%%%%%%%%%%%%%%%%%%%%%%%%%%%
\subsubsection{}
\label{ArrowFromBGToGeomBGSSS}
For any exact representation $\rho: G \to GL(V)$
a geometric model $\bgm{(G, \rho)}$ for the $\hH$-type of $\bet{G}$ is constructed in
\cite[Prop. 4.2.6]{MV}. As a space, that is as an object of $\Spc$,
it is defined as the quotient $G\backslash \egm{(G,\rho)}$, where $\egm{(G,\rho)}$ is an open subscheme of $V^{\infty}$
consisting of the points where the diagonal action of $G$ is free.
It is proved that $\egm{(G,\rho)}$ is $\hH$-contractible and that $\bgm{(G, \rho)}$ is $\hH$-isomorphic to $\bet{G}$.
The choice of the representation $\rho$ will not be important for us, so below we denote the
respective spaces simply as $\egm{G}$ and $\bgm{G}$.

This isomorphism can be described explicitly.
Indeed, the algebraic torsor $\egm{G}$ over $\bgm{G}$ induces a canonical $\hH_s((\smF{S})_{et})$-arrow $i_{et}$
from its base $\bgm{G}$ to the classificator $\betO{G}$. By adjunction, we get the corresponding $\hH_s$-arrow
$i_{et/Nis}: \bgm{G} \to \bet{G}$.
This arrow becomes the $\hH$-isomorphism we look for.

Let $x: S \to \bgm{G}$ be a rational point of the (inductive) scheme $\bgm{G}$.
Denote by $P_x$ the algebraic $G$-torsor
$\pi^{-1}(x)$, where $\pi$ is the projection $\egm{G} \to \bgm{G}$.

%%%%%%%%%%%%%%%%%%%%%%%%%%%%%%%%%%%%%%%%%%%%%%%%%%%%%%%%%%%%%%%%%%%%%%%%%%%%%%%
\subsubsection{}
\label{???}
To relate the homotopic models and $\varepsilon$ we need more spaces and arrows.
Consider the $\Spc$-diagram
$$
\bzar{G}\stackrel{p}{\low}G\backslash\left(\ezar{G}\times \egm{G}\right)\xrightarrow{\tilde{\eps}_\rho}\bgm{G},
$$
where the action on the middle term is diagonal and the arrows $p$ and $\tilde{\eps}_\rho$ are induced by the projections.
It follows from \cite[Prop. 4.2.3]{MV} that $p$ is an $\hH$-isomorphism.
Set
$$
\bzart{G}=G\backslash(\ezar{G}\times \egm{G}).
$$

Let $\varphi: \cu\row\cv$ be a $\Spc$-arrow
represented by a termwise smooth morphism.
Then we have the well-defined $\Spc$-product
$\cu\times_\cv\times \dots \times_\cv\cu$. Denote it by $(\cu/\cv)^{n}$.
Consider the following simplicial space $sE_\bullet(\cu/\cv)\in \sSpc$, where $sE_n(\cu/\cv) = (\cu/\cv)^{n+1}\in \Spc$, and the face and degeneration
morphisms are partial projections and partial diagonals. Applying the diagonal functor
$\sSpc\row\Spc$ to $sE_\bullet(\cu/\cv)$ we get a space $E_\bullet(\cu/\cv)=\hiq{\cu/\cv}$ together with the structure $\Spc$-arrow $\hiq{\cu/\cv}\row\cv$. Denote this map as ${\cal X}(\cu\row\cv)$.

%%%%%%%%%%%%%%%%%%%%%%%%%%%%%%%%%%%%%%%%%%%%%%%%%%%%%%%%%%%%%%%%%%%%%%%%%%%%%%
\begin{prop}
\label{etzar}
$$
{\cal X}\left(\egm{G}\stackrel{\pi}{\row}\bgm{G}\right)=
\bzart{G}\stackrel{\tilde{\eps}_{\rho}}{\row}\bgm{G}.
$$
\end{prop}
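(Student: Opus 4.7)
The strategy is to generalise Proposition \ref{Ghiq} to the relative setting, with $\bgm{G}$ as the base rather than $S$. The key ingredient is already in place: by construction (see \ref{ArrowFromBGToGeomBGSSS}) the map $\egm{G}\row\bgm{G}$ is an algebraic $G$-torsor, since $G$ acts freely on $\egm{G}$ with quotient $\bgm{G}$. Once this is observed, the formula used in Proposition \ref{Ghiq} should give the required simplicial isomorphism almost verbatim, with the fibre-product structure over $\bgm{G}$ replacing the absolute product over $S$.

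Concretely, my plan is to compare the two sides degree by degree and then check the simplicial structure and the structure maps to $\bgm{G}$. In simplicial degree $n$ the left-hand side is $\hiq{\egm{G}/\bgm{G}}_n=(\egm{G}/\bgm{G})^{n+1}$, the $(n+1)$-fold fibre product over $\bgm{G}$, while the right-hand side reads $\bzart{G}_n=G\backslash(G^{n+1}\times \egm{G})$ with diagonal $G$-action. I define
$$
\Phi_n\colon G^{n+1}\times \egm{G}\row (\egm{G}/\bgm{G})^{n+1},\qquad ((g_0,\ldots,g_n),e)\mapsto (g_n^{-1}e,\ldots,g_0^{-1}e),
$$
following the convention of Proposition \ref{Ghiq}. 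The image lies in the fibre product since all $g_i^{-1}e$ have the same class in $\bgm{G}$, and a direct computation shows $\Phi_n$ is $G$-invariant, so it descends to $\tilde\Phi_n\colon \bzart{G}_n\row\hiq{\egm{G}/\bgm{G}}_n$. Bijectivity of $\tilde\Phi_n$ follows from the torsor property of $\egm{G}\row\bgm{G}$: given $(e_0,\ldots,e_n)$ in the fibre product and any representative $e\in\egm{G}$ with $\bar e=\overline{e_i}$, the torsor condition provides unique $g_i\in G$ with $g_i^{-1}e=e_i$, and replacing $e$ runs through a single $G$-orbit.

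Next I verify that $\tilde\Phi_\bullet$ is compatible with the simplicial structures. Both sides carry face and degeneration morphisms given by partial projections and partial diagonals on the $n+1$ copies, and these agree under $\Phi_n$ once one accounts for the order reversal $i\leftrightarrow n-i$ built into the formula, exactly as in the argument for Proposition \ref{Ghiq}. Finally, the structure maps to $\bgm{G}$ match: on the left ${\cal X}(\egm{G}\row\bgm{G})$ sends $(e_0,\ldots,e_n)$ to the common class $\bar e_0\in\bgm{G}$, while on the right $\tilde\eps_\rho$ sends $[(g_0,\ldots,g_n),e]$ to $\bar e$; these coincide under $\tilde\Phi$ because $\overline{g_i^{-1}e}=\bar e$.

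The only real obstacle is careful bookkeeping of the diagonal $G$-action, the fibre-product identifications, and the order reversal convention inherited from Proposition \ref{Ghiq}. There is no extra conceptual content beyond the torsor property of $\egm{G}\row\bgm{G}$, which is precisely what transports the formula from the absolute to the relative setting.
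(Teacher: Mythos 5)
Your argument is correct and is essentially the paper's own proof: both rest on the torsor property of $\egm{G}\row\bgm{G}$, which identifies $(\egm{G}/\bgm{G})^{n+1}$ with $G\backslash(G^{n+1}\times\egm{G})\cong\egm{G}\times G^{n}$ via exactly the formula of Proposition \ref{Ghiq}, with the same fibrewise check of the simplicial structure. You merely make explicit the map, its $G$-invariance, and the compatibility with the structure maps to $\bgm{G}$, which the paper leaves implicit.
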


\begin{proof}
It follows from the fact that $(\egm{G}/\bgm{G})^{r+1}=\egm{G}\times G^{r}$ and
faces and degeneration maps are as in $\bzart{G}$ (which on the fibers can be seen from
Prop. \ref{Ghiq}).
\end{proof}

In particular, the fiber $(\tilde{\eps}_{\rho})^{-1}(x)$ over the rational point $x\in \bgm{G}$
is $\hiq{P}=EP_x$ (see \ref{ArrowFromBGToGeomBGSSS} for the notation $P_x$), and
the composition $E(P_x)\hookrightarrow\bzart{G}\stackrel{p}{\lrow}\bzar{G}$ is exactly the map
$i_P$ from \ref{ClasMap}. Abusing notations somewhat we will use the name "Nisnevich-\'etale fiber"
for this classifying map.

%%%%%%%%%%%%%%%%%%%%%%%%%%%%%%%%%%%%%%%%%%%%%%%%%%%%%%%%%%
\subsection{Groups and groupoids}

Let $G$ be a linear algebraic group over a field $\Field$.
An anti-automorphism: $g\mapsto g^{-1}$ of $G$
identifies the set of left and right $G$-torsors: $X\mapsto X^{-1}$.

If $X$ is a left algebraic $G$-torsor, the functor
$U\mapsto \Aut_G(X\times U\row U)$, by \'etale descent is
represented by some group scheme $G_X$ over $\Field$, and $X$ has
a natural structure of the right $G_X$-torsor.
We get a {\it torsor triple}, i. e. the triple $(G,X,H)$,
where $G$ and $H$ are group schemes, and $X$ is a left $G$-torsor, and a right $H$-torsor,
and these structures commute. Such triples can be composed:
$(G,X,H)\circ (H,Y,K):=(G,X\times_H Y,K)$, and inverted: $(G,X,H)^{-1}:=(H,X^{-1},G)$, and form a
{\it groupoid}. In particular, $(G,X,H)\circ (G,X,H)^{-1}=id_G=(G,G,G)$.
Thus, torsor triples are just morphisms of our groupoid.

The following statement shows how the classifying spaces corresponding to two different groups from
the same groupoid are related.

\begin{prop}
\label{GYH}
For any torsor triple $(G,Y,H)$ there is a natural $\hs$-identification
$$
\xymatrix @-0.7pc{
\hiq{Y}\times\bzar{H}^R \ar @{=}[rr] \ar @{->}[rd] & &\bzar{G}^L\times\hiq{Y} \ar @{->}[ld]\\
& \hiq{Y} &
}
$$
such that the natural projections
$\hiq{Y}\times\bzar{H}^R\row\bzar{H}^R$ and $\bzar{G}^L\times\hiq{Y}\row\bzar{G}^L$
map "the other" copy of $\hiq{Y}$ to the Nisnevich-\'etale fibers over $[Y]$ (the map from \ref{ClasMap}).
\end{prop}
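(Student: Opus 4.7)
The plan is to construct a common $\Spc$-model $\wt Y$ lying over $\hiq Y$ that identifies in $\hs$ with both sides. Set
$$
\wt Y \;:=\; G\backslash(\ezar{G}\times Y\times \ezar{H})/H,
$$
with $G$ acting diagonally on the left of $\ezar G$ and $Y$, $H$ acting diagonally on the right of $Y$ and $\ezar H$, and the two actions commuting. Carrying out the quotients in either order and applying Proposition~\ref{Ghiq} to $Y$ as a left $G$-torsor (respectively as a right $H$-torsor) yields the two $\Spc$-identifications
$$
\wt Y \;=\; (\hiq{Y} \times \ezar{H})/H \;=\; (\ezar{G} \times \hiq{Y})/G,
$$
where the residual $H$- (resp.\ $G$-) action on $\hiq Y$ is the one inherited from the right (resp.\ left) action on $Y$.

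To pass to $\hs$, observe that the weak equivalence $\hiq Y \to \Supp Y$ of Proposition~\ref{HotModelForSuppProp} is $H$-equivariant, and any $H$-action on $\Supp Y$ is automatically trivial, since $\Supp Y$ is a sub-object of the final object $S$ and therefore admits only one arrow to itself. Hence in $\hs$
$$
(\hiq Y \times \ezar H)/H \;\simeq\; (\Supp Y \times \ezar H)/H \;=\; \Supp Y \times \bzar H^R \;\simeq\; \hiq Y \times \bzar H^R,
$$
and symmetrically $\wt Y \simeq \bzar G^L \times \hiq Y$. Composing these two weak equivalences gives the asserted $\hs$-identification $\hiq Y \times \bzar H^R = \bzar G^L \times \hiq Y$.

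For the commutative triangle over $\hiq Y$: by \ref{EXIsSubPointinsHSSS}, $\hiq Y$ is a sub-object of $S$ in $\hs$, so any space admits at most one $\hs$-arrow to $\hiq Y$; the two natural projections from $\wt Y$ to $\hiq Y$ coming from the two factorizations thus coincide automatically, and the triangle commutes. For the last assertion, the natural projection $\wt Y \to \ezar H/H = \bzar H^R$ is, under the identification $\wt Y \simeq \bzar G^L \times \hiq Y$, the composition of the projection to $\hiq Y$ with the Nisnevich-\'etale classifying arrow $i_Y : \hiq Y \to \bzar H^R$ of the right $H$-torsor $Y$ (compare \ref{ClasMap} and Proposition~\ref{etzar}); its fiber over $[Y]$ is exactly $\hiq Y$, so the ``other'' copy of $\hiq Y$ maps into this fiber tautologically, and symmetrically for the $\bzar G^L$-side.

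The main obstacle is the first weak equivalence in the displayed chain, namely that the $H$-equivariant map $\hiq Y \to \Supp Y$ induces $(\hiq Y \times \ezar H)/H \xrightarrow{\sim} (\Supp Y \times \ezar H)/H$. Because $\ezar H \to \bzar H^R$ is a Nisnevich-locally trivial $H$-torsor, both sides are fiber bundles over $\bzar H^R$ whose fibers are $\hiq Y$ and $\Supp Y$ respectively, and fiberwise the map is precisely the weak equivalence of Proposition~\ref{HotModelForSuppProp}; combined with the simplicial model structure of \cite{MV}, this upgrades to a weak equivalence of the totals.
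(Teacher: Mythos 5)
Your argument is correct and follows the same skeleton as the paper's proof: the same intermediate model $G\backslash(\ezar{G}\times Y\times\ezar{H})/H$, the same use of Proposition~\ref{Ghiq} to peel off the two quotients in either order, and the same appeal to the fact that $\hiq{Y}$ is a subobject of the point (\ref{EXIsSubPointinsHSSS}) both for commutativity over $\hiq{Y}$ and for identifying the projections restricted to the distinguished copies of $\hiq{Y}$. The one place you genuinely diverge is the key step that the residual $H$- (resp.\ $G$-) action on $\hiq{Y}$ contributes nothing, i.e.\ that $(\hiq{Y}\times\ezar{H})/H\simeq\hiq{Y}\times\bzar{H}^R$. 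The paper argues inside $\hs$: since $\Hom_{\hs}(\hiq{Y}\times H,\hiq{Y})=*$, the action is homotopically trivial and the Borel construction splits. You instead replace $\hiq{Y}$ equivariantly by $\Supp Y$, where the action is \emph{strictly} trivial in $\Spc$, and transport the splitting back. This buys you something real: you never need to pass from ``homotopically trivial action'' to ``the homotopy quotient is a product,'' which is the least transparent point of the paper's argument. The cost is the auxiliary claim that the equivariant weak equivalence $\hiq{Y}\row\Supp Y$ induces a weak equivalence $(\hiq{Y}\times\ezar{H})/H\row(\Supp Y\times\ezar{H})/H$; your justification via ``fiber bundles over $\bzar{H}^R$'' is loose as phrased, but the clean version is immediate: viewing both sides as diagonals of bisimplicial sheaves, in each fixed $\ezar{H}$-degree $n$ the map is $\hiq{Y}\times H^{n}\row\Supp Y\times H^{n}$, a weak equivalence, and a row-wise weak equivalence of bisimplicial sheaves induces a weak equivalence on diagonals. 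With that rephrasing the proof is complete.
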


\begin{proof}
Consider $\ezar{G}\times Y\times\ezar{H}$ with the left $G$-action on the first two factors and
the right $H$-action on the last two. From Proposition \ref{Ghiq}, in $\Spc$ we have an identification:
$$
G\backslash(\ezar{G}\times Y\times\ezar{H})=\hiq{Y}\times\ezar{H}\,\,\,\text{ and }\,\,\,
(\ezar{G}\times Y\times\ezar{H})/H=\ezar{G}\times\hiq{Y}
$$
with the standard right $H$ and left
$G$-action, respectively. And $G\backslash(\ezar{G}\times\hiq{Y})$ and
$(\hiq{Y}\times\ezar{H})/H$ are just homotopic quotients of $\hiq{Y}$ by these actions. But, by
\ref{EXIsSubPointinsHSSS},
$$
\Hom_{\hs(\Field)}(G\times\hiq{Y},\hiq{Y})=*=
\Hom_{\hs(\Field)}(\hiq{Y}\times H,\hiq{Y}).
$$
Thus, our actions on $\hiq{Y}$ are homotopically trivial.
Hence, $G\backslash(\ezar{G}\times\hiq{Y})=\bzar{G}^L\times\hiq{Y}$ and
$(\hiq{Y}\times\ezar{H})/H=\hiq{Y}\times\bzar{H}^R$ in $\hs(\Field)$.
Thus, we get an identification in $\hs(\Field)$:
$$
\hiq{Y}\times\bzar{H}^R=G\backslash(\ezar{G}\times Y\times\ezar{H})/H=\bzar{G}^L\times\hiq{Y}.
$$
Since $\hiq{Y}$ is a subobject of the final object $\sbt$ of $\hs(\Field)$,
this identification is over $\hiq{Y}$.

Consider the $G-H$-equivariant projection:
$\ezar{G}\times Y\times\ezar{H}\row\ezar{G}\times\ezar{H}$ giving the map:
$G\backslash(\ezar{G}\times Y\times\ezar{H})/H\row\bzar{G}^L\times\bzar{H}^R$.
Since the maps $G\backslash(\ezar{G}\times Y\times H)/H\row (\hiq{Y}\times\bullet)$ and
$G\backslash(G\times Y\times\ezar{H})/H\row (\bullet\times\hiq{Y})$ are isomorphisms
(here $\bullet$ is the only homotopic rational point on $\bzar{H}^R$ and $\bzar{G}^L$), we see that
the map $(\hiq{Y}\times\bullet)\row\bzar{G}^L$ is induced by the $G$-equivariant map
$\ezar{G}\times Y\row\ezar{G}$, while the map $(\bullet\times\hiq{Y})\row\bzar{H}^R$ is
induced by the $H$-equivariant map $Y\times\ezar{H}\row\ezar{H}$. By Lemma \ref{Ghiq}
these are exactly the Nisnevich-\'etale fibers over $[Y]$.
\end{proof}

%%%%%%%%%%%%%%%%%%%%%%%%%%%%%%%%%%%%%%%%%%%%%%%%%%%%%%%%%%%%%%%%%%%%%%%%%%%%

\subsubsection{}
Note, that left and right classifying spaces $\bzar{G}^L$ and $\bzar{G}^R$ are canonically isomorphic.

In the above situation, $\bzar{G}$ is not isomorphic to $\bzar{H}$, in general.
The situation with the \'etale classifying spaces is different.

%%%%%%%%%%%%%%%%%%%%%%%%%%%%%%%%%%%%%%%%%%%%%%%%%%%%%%%%%%%%%%%%%%%%%%%%%%%%
\begin{prop}
\label{etGYH}
For any torsor triple $(G,Y,H)$ there is canonical $\hH_s$-isomorphism
$$
\bet{H}\stackrel{\theta_Y}{\row}\bet{G},
$$
which acts on homotopic rational points by $[X]\mapsto [Y\times_H X]$.
\end{prop}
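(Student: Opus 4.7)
The plan is to construct $\theta_Y$ via the universal property of the \'etale classifying space, and to prove invertibility by applying the same construction to the inverse torsor triple $(H,Y^{-1},G)$.

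First I would work \'etale-locally. The space $\betO{H}$ carries a canonical \'etale $H$-torsor (the \'etale analogue of the universal torsor $\egm{H}\to\bgm{H}$ from \ref{ArrowFromBGToGeomBGSSS}). Twisting this torsor by $Y$ via $X\mapsto Y\times_H X$ produces an \'etale $G$-torsor over $\betO{H}$: the left $G$-action on $Y$ commutes with the right $H$-action used to form the quotient, so $G$ acts freely on the result, and \'etale-locally (where the universal $H$-torsor is trivial) the twist becomes $Y$ itself, manifestly a $G$-torsor. By the \'etale analogue of Theorem \ref{MV-theorem}, this classifies an $\hs((\smF{S})_{et})$-arrow $\betO{H}\to\betO{G}$, and applying $R\pi_*$ (see \ref{OtherTopologiesSSS}) gives the desired $\hH_s$-arrow $\theta_Y:\bet{H}\to\bet{G}$.

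Next I would construct the inverse. Applying the same construction to $(H,Y^{-1},G)$ yields $\theta_{Y^{-1}}:\bet{G}\to\bet{H}$. By functoriality of the classifying correspondence, the composition $\theta_{Y^{-1}}\circ\theta_Y$ is classified by the iterated twist of the tautological $H$-torsor on $\betO{H}$, namely by the $H$-torsor $Y^{-1}\times_G(Y\times_H(-))=(Y^{-1}\times_G Y)\times_H(-)$. Since $(G,Y,H)^{-1}\circ(G,Y,H)=\mathrm{id}_H$, the biproduct $Y^{-1}\times_G Y$ is canonically identified with the trivial bitorsor $H$, so the iterated twist is canonically the tautological $H$-torsor; hence $\theta_{Y^{-1}}\circ\theta_Y=\mathrm{id}_{\bet{H}}$. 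The symmetric argument establishes $\theta_Y\circ\theta_{Y^{-1}}=\mathrm{id}_{\bet{G}}$, so $\theta_Y$ is an $\hH_s$-isomorphism. The action on homotopic rational points is then immediate from naturality: a point $[X]:S\to\bet{H}$ classifies the algebraic $H$-torsor $X$, and pulling back the universal $G$-torsor along $\theta_Y\circ[X]$ yields the twist $Y\times_H X$, so the induced map is $[X]\mapsto[Y\times_H X]$.

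The main technical obstacle is verifying rigorously that $X\mapsto Y\times_H X$ defines a natural transformation of sheaves of torsors at the level of the classifying space (not merely on rational points), and carefully handling the adjoint pair $\pi^*\dashv R\pi_*$ between the \'etale and Nisnevich homotopy categories. The essential use of the \'etale topology is that $H$-torsors become locally trivial there, which is what makes $Y\times_H(-)$ genuinely a functor valued in $G$-torsors; over the Nisnevich topology $\bzar{G}$ and $\bzar{H}$ are genuinely different (cf.\ \ref{NisAndEtBAreDifferentSSS}) and no such isomorphism exists in general.
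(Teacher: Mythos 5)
Your argument is correct, but it is genuinely different from the one in the paper. The paper's proof is a two-line deduction from Proposition \ref{GYH}: one takes the already-established Nisnevich identification $\hiq{Y}\times\bzar{H}\cong\bzar{G}\times\hiq{Y}$, applies $\pi^*$ (which collapses $\hiq{Y}$ to the final object of the \'etale topos, since $Y$ has \'etale-local points, and respects products), and then applies $R\pi_*$ to land in $\hs((\smF{S})_{Nis})$; the statement about rational points is read off from the explicit equivariant maps $\ezar{H}\times X\row\ezar{H}$ and $\ezar{G}\times(X\times_H Y)\row\ezar{G}$. You instead construct $\theta_Y$ intrinsically in the \'etale topos via the classification theorem: the bitorsor contraction $Y\times_H(-)$ sends \'etale $H$-torsors to \'etale $G$-torsors (this is where \'etale-local triviality of $Y$ is essential, as you correctly emphasize), hence classifies a map $\betO{H}\row\betO{G}$, and the groupoid identity $Y^{-1}\times_G Y=H$ makes invertibility transparent. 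Both routes are valid. What the paper's route buys is brevity (Proposition \ref{GYH} has already done the work) and, more importantly, an explicit compatibility between $\theta_Y$ and the Nisnevich-level identification, which is what the authors actually use downstream (e.g.\ in Proposition \ref{lambdadejstvie}). What your route buys is independence from Proposition \ref{GYH} and a cleaner conceptual explanation of \emph{why} the \'etale classificators of Morita-equivalent groups agree while the Nisnevich ones do not. The only points you should nail down are (i) that the contracted product $Y\times_H P$ is a torsor for formal torsors $P$ over an arbitrary simplicial base $B_\bullet$, not just over rational points (routine, since being a torsor is a local condition and $Y$ trivializes \'etale-locally), and (ii) that the \'etale analogue of Theorem \ref{MV-theorem} is available and natural in $B_\bullet$, so that your Yoneda-type argument and the identification of compositions with iterated twists are legitimate; both hold since \cite{MV} proves the classification for an arbitrary site.
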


\begin{proof}
By Proposition \ref{GYH}, we have a natural identification
$\hiq{Y}\times\bzar{H}\stackrel{\cong}{\row}\bzar{G}\times\hiq{Y}$.
Since $\bet{G}=R\pi_*\circ\pi^*\bzar{G}$, and
$\pi^*(\hiq{Y})=\bullet_{et}$  and $\pi^*$ respects products, we get an identification
$\bet{H}\stackrel{\theta_Y}{\row}\bet{G}$.

Since the fibers $\hiq{X}\row\bzar{H}$ and $\hiq{X\times_H Y}\row\bzar{G}$ are given by
the $H$, respectively, $G$-equivariant maps $\ezar{H}\times X\row\ezar{H}$ and
$\ezar{G}\times (X\times_H Y)\row\ezar{G}$, and
$(Y\times\ezar{H}\times X)/H=(Y\times_H X)\times\hiq{Y}$ we get that
$[X]\mapsto [Y\times_H X]$.
\end{proof}

The above map does not preserve a base-point (given by a trivial torsor).

%%%%%%%%%%%%%%%%%%%%%%%%%%%%%%%%%%%%%%%%%%%%%%%%%%%%%%%%%%
\subsection{Some invariants of torsors}
%%%%%%%%%%%%%%%%%%%%%%%%%%%%%%%%%%%%%%%%%%%%%%%%%%%

\subsubsection{}
As soon as we have got an arrow $\Supp P\row BG$, we can apply cohomology theories to it. For example, to such a theory $A$ we can associate an invariant:
$$
\kker_A(P):=\kker[A(BG)\row A(\Supp P)].
$$
The task is to determine which information on $P$ this invariant carries, and to describe the
possible values of it.

%%%%%%%%%%%%%%%%%%%%%%%%%%%%%%%%%%%%%%%%%%%%%%%%%%%%%%%%%%%%%%%%%%%%%%%%%%%%%%%%%%%%%%%%%%%%%%%%%%%%%%%

\subsubsection{}
For an algebraic subgroup $H \subset G$ and an algebraic $H$-torsor $Q$ we have an algebraic $G$-torsor $P$ defined by
$P=G\times_H Q$.
This fits into the commutative $\mathcal H_s$-diagram
$$
\xymatrix{
\Supp Q \ar[r] \ar[d] & BH\ar[d] \\
\Supp P \ar[r] & BG
}.
$$
It gives the commutative diagram
\begin{equation}
\label{YXalpha}
\xymatrix{
A(BG) \ar[r]  \ar[d]   & A(\Supp P)  \ar[d] \\
A(BH)  \ar[r]                & A(\Supp Q).
}
\end{equation}
This enables one to get information about $\kker_A(P)$ from $\kker_A(Q)$ for induced torsors.

%%%%%%%%%%%%%%%%%%%%%%%%%%%%%%%%%%%%%%%%%%%%%%%%%%%%%%%%%%%%%%%%%%%%%%%%%%%%%%%%%%%%%%%%%%%%%%%%%%%%%%%%%%%%
\section{Subtle Stiefel--Whitney Classes}
\label{CohomologyOfBOS}
We would like to apply the technique developed above to the case of an
orthogonal group with the aim of classifying quadratic forms.
The first step is to choose an appropriate cohomology theory $A$ and
to compute the cohomology ring of the classificator.
Our theory will be motivic cohomology $\hm^{*,*'}$.
From now on let $\charac \Field\neq 2$.
Set
$H=\hm^{*,*'}(\Spec \Field,\zz/2)$.
By the result of V.\,Voevodsky \cite{VVMilnorConj},
$H=K^M_*(\Field)/2[\tau]$, where $\tau$ is the only non-zero
element of degree $(1)[0]$.

%%%%%%%%%%%%%%%%%%%%%%%%%%%%%%%%%%%%%%%%%%%%%%%%%%%%%%%%%%%%%%%%%%%%%%%%%%%%%%%%%%%%%%%%%%%%%%%%%%%%%%%%%%%%
\subsection{Motivic cohomology of $BO(n)$}
\label{MotCohBOn}
Everywhere below $\sqrt{-1}\in \Field$.
For $n = 1, 2,\dots $ the form
$q_n = x_1^2 + \dots + x_n^2$
will be called the standard one, and the respective automorphism group will be denoted $O(n)$.
Then $n$-dimensional quadratic forms will correspond to the (left) $O(n)$-torsors via the rule:
$q\leftrightarrow X_q=\Iso(q\row q_n)$.
Let
%0%0
%$$
%w_i\in\hm^{i,i}(B^{et}_{Nis}{O(n)},\zz/2), \quad c_i\in\hm^{2i,i}(B^{et}_{Nis}{O(n)},\zz/2)
%$$
%0%0
$$
w_i\in\hm^{i,i}(\bet{O(n)},\zz/2), \quad c_i\in\hm^{2i,i}(\bet{O(n)},\zz/2)
$$
be the Stiefel--Whitney
and Chern classes. Slightly abusing the notation
denote $\eps^*$ of them also as $w_i$ and $c_i$, where
%0%0 $\eps: B{O(n)}\row B^{et}_{Nis}{O(n)}$
$\eps: \bzar{O(n)}\row\bet{O(n)}$ is the arrow from \ref{ArrowFromBGToBGEtSSS}.

%%%%%%%%%%%%%%%%%%%%%%%%%%%%%%%%%%%%%%%%%%%%%%%%%%%%%%%%%%%%%%%

\begin{thm}
\label{u}
There is a unique set $u_1,\dots, u_n$ of classes in the motivic $\zz/2$-cohomology for $BO(n)$
such that $\ddeg u_i=([i/2])[i]$,
$$
\hm^{*,*'}(\bzar{O(n)},\zz/2)=H[u_1,\ldots,u_n], \quad w_i=u_ i\tau^{[(i + 1)/2]},
\text{ and }c_i=u_i^2\tau^{\delta(i)}.
$$
Here $\delta(i)$ is the indicator of oddness, that is $\delta=0$, for $i$ even, and $\delta=1$, for $i$ odd.
\end{thm}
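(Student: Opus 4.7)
The plan is induction on $n$. The base case $n=0$ is trivial. For the inductive step I use the natural $\hs$-fibration
$$
O(n)/O(n-1) \longrightarrow \bzar{O(n-1)} \longrightarrow \bzar{O(n)}
$$
coming from the inclusion $O(n-1)\hookrightarrow O(n)$ fixing the last basis vector. The fiber is the affine quadric $\{q_n=1\}\subset\aaa^n$; since $\sqrt{-1}\in k$ the form $q_n$ is split, so this quadric is $\hH$-equivalent to a motivic sphere of bidegree $(n-1,[n/2])$, and its motivic $\zz/2$-cohomology is a free $H$-module of rank two, generated by $1$ and a class $v_n\in\hm^{n-1,[n/2]}$.

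Assume inductively $\hm^{*,*'}(\bzar{O(n-1)};\zz/2)=H[u_1,\ldots,u_{n-1}]$. Running the motivic Leray--Serre spectral sequence (equivalently, a Rothenberg--Steenrod argument on the bar construction) for the fibration above, the $E_2$-page is $\hm^{*,*'}(\bzar{O(n)};\zz/2)\otimes_H(H\oplus H\cdot v_n)$, and it converges to $\hm^{*,*'}(\bzar{O(n-1)};\zz/2)=H[u_1,\ldots,u_{n-1}]$. The only potentially non-zero differential is $d_n$, which transgresses $v_n$ to a class in $\hm^{n,[n/2]}(\bzar{O(n)};\zz/2)$ that I define to be $u_n$. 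For the spectral sequence to abut to a polynomial ring in $n-1$ variables, $u_n$ must be a polynomial generator independent of $u_1,\ldots,u_{n-1}$, giving $\hm^{*,*'}(\bzar{O(n)};\zz/2)=H[u_1,\ldots,u_n]$.

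To identify the $u_i$ with the classical invariants, I pull the Stiefel--Whitney and Chern classes back along $\eps:\bzar{O(n)}\row\bet{O(n)}$. In $H[u_1,\ldots,u_n]$, bidegree considerations leave $\eps^*w_i$ in the $H$-linear span of $u_i\tau^{[(i+1)/2]}$ together with decomposable monomials in $u_1,\ldots,u_{i-1}$ and $\tau$ (and analogously $\eps^*c_i$ in the span of $u_i^2\tau^{\delta(i)}$ and lower terms). The decomposable contributions vanish by induction combined with functoriality along $O(n-1)\hookrightarrow O(n)$, and the leading coefficient is pinned to $1$ by topological realization, which sends both $u_i$ and $w_i$ to the classical topological Stiefel--Whitney class. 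Uniqueness of the $u_i$ is immediate: multiplication by $\tau$ is injective in $H[u_1,\ldots,u_n]$, so $u_i$ is determined by $u_i\tau^{[(i+1)/2]}=w_i$.

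The main technical obstacle is establishing the fibration and its associated motivic spectral sequence rigorously at the unstable homotopy level --- $\bzar{G}$ is purely simplicial with only one homotopy rational point, so the fibration picture requires more justification than its classical topological analogue --- together with verifying non-vanishing of the transgression $d_n(v_n)=u_n$. This non-vanishing is precisely what makes the subtle classes visible in the Nisnevich classificator, and relies crucially on the motivic-sphere structure of the affine quadric provided by the assumption $\sqrt{-1}\in k$.
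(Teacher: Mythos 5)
Your overall strategy coincides with the paper's: induction on $n$ via the pair $O(n-1)\subset O(n)$, using that $O(n-1)\backslash O(n)$ is the split affine quadric whose motive is $\zz\oplus\zz([n/2])[n-1]$, so that the passage from $\bzar{O(n)}$ to $\bzar{O(n-1)}$ is governed by a two-term (Gysin-type) sequence whose connecting map defines $u_n$. However, the step you flag as "the main technical obstacle" is not a loose end one can defer --- it is the actual content of the proof, and you have not supplied it. There is no off-the-shelf motivic Leray--Serre spectral sequence for the map $\bttzar{O(n-1)}=O(n-1)\backslash\ezar{O(n)}\row\bzar{O(n)}$ (note also that the correct total space is this intermediate model, not $\bzar{O(n-1)}$ itself; one needs the zig-zag through $\btzar{O(n-1)}$ to compare). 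This map is only a fibration levelwise: over each simplicial component it is a trivial bundle with fibre the affine quadric. The paper's substitute for your spectral sequence is to work in Voevodsky's category $\dmDVA{\bzar{O(n)}}$ of motives over the simplicial scheme and to prove a gluing statement (Proposition \ref{gradTate}): a relative motive whose graded components are all isomorphic to a fixed Tate motive is itself that Tate motive, the obstruction living in $\Hom(\pi_1(CC(Y_\bullet)),R^{\times})$, which vanishes for $R=\zz/2$. Applying this to the levelwise decomposition of $M(\bttzar{O(n-1)}\row\bzar{O(n)})$ produces the exact triangle $T([n/2])[n]\stackrel{f_n}{\row}T\row M(\bttzar{O(n-1)}\row\bzar{O(n)})$ whose boundary map is multiplication by $u_n$, and then the ring-homomorphism property of $g_n^*$ plus weight considerations force $h_n^*=0$ and the polynomial answer. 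Without an argument of this kind your $E_2$-page and its convergence are unjustified assertions.

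The identification of the leading coefficients is also not done correctly. Over a general field $\Field$ (with $\sqrt{-1}$) there is no topological realization to appeal to, and even where one exists it could not distinguish a class that dies under realization; since coefficients are $\zz/2$ the real issue is to rule out that the "leading coefficient" is $0$, i.e.\ to show $\eps^*w_i$ is actually divisible by $u_i$ with nonzero quotient. The paper does this by first computing the case $n=1$ by hand (restriction to the unique homotopic rational point kills the constant term in $\eps^*\omega_1=\tau u_1+\{a\}$, and $\tau c_1=\omega_1^2$ kills the linear term in $\eps^*c_1$), and then invoking the splitting principle: the restriction $\delta^*$ to $\bzar{\left(\times_{i=1}^nO(1)\right)}$ is injective and sends $u_i$ to $\tau^{[i/2]}\sigma_i(x_1,\ldots,x_n)$ (Proposition \ref{i111n}), which pins down $\eps^*\omega_i=\tau^{[(i+1)/2]}u_i$ and $\eps^*c_i=u_i^2\tau^{\delta(i)}$ by degree count. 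Your uniqueness argument via injectivity of multiplication by $\tau$ on $H[u_1,\ldots,u_n]$ is fine once the ring structure is established.
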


The Theorem will be proven in \ref{ProofUTheoremSSS}. We start with some preliminary observations.

%%%%%%%%%%%%%%%%%%%%%%%%%%%%%%%%%%%%%%%%%%%%%%%%%%%%%%%%%%%%%%%%%%%%%%%%%%%%%%%%%%%%%%%%
%\subsubsection{}
%\label{Affquad}
Let $p$, $q$ be quadratic forms. Denote as $I(p,q)$ the (smooth) variety of isometric embeddings
from $p$ to $q$. For example, $A_q=I(q_1,q)$
is the affine quadric $q=1$.

%%%%%%%%%%%%%%%%%%%%%%%%%%%%%%%%%%%%%%%%%%%%%%%%%%%%%%%%%%%%%%%%%%%%%%%%%%%%%%
\begin{prop}
\label{Affquad}
Let $q$ be a quadratic form of dimension $n$. Then
$$
O(n-1)\backslash X_q\cong A_q,\hspace{3mm} \text{in particular,}\hspace{3mm}
O(n-1)\backslash O(n)\cong I(q_1,q_n).
$$
\end{prop}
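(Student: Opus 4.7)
The plan is to build an explicit morphism $f\colon X_q\row A_q$ and show that it exhibits $X_q$ as an $O(n-1)$-torsor over $A_q$. Embed $O(n-1)\hookrightarrow O(n)$ as the stabilizer of the first standard basis vector $e_1\in V_{q_n}$ (so $q_n(e_1)=1$), and define $f$ on $T$-points by
\[
X_q(T)=\Iso_T(q_T,(q_n)_T)\ni\phi\longmapsto \phi^{-1}(e_1)\in V_q(T).
\]
The identity $q(\phi^{-1}(e_1))=q_n(e_1)=1$ shows the image lies in $A_q(T)=I(q_1,q)(T)$. For $g\in O(n-1)(T)$, the left $O(n)$-action gives $(g\cdot\phi)^{-1}(e_1)=\phi^{-1}(g^{-1}(e_1))=\phi^{-1}(e_1)$, so $f$ is $O(n-1)$-invariant and descends to $\bar f\colon O(n-1)\backslash X_q\row A_q$.

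To see that $\bar f$ is an isomorphism it suffices (by fpqc descent) to check two things, both \textbf{étale-locally}. First, the action-projection morphism
\[
\alpha\colon O(n-1)\times_\Field X_q\lrow X_q\times_{A_q}X_q,\qquad (g,\phi)\mapsto (g\phi,\phi),
\]
is an isomorphism: given $(\phi_1,\phi_2)$ with $\phi_1^{-1}(e_1)=\phi_2^{-1}(e_1)$, the element $\phi_1\phi_2^{-1}\in O(n)(T)$ fixes $e_1$, hence lies in $O(n-1)(T)$ and supplies the unique preimage. This is a functorial bijection, so an isomorphism of schemes, and it means $X_q\row A_q$ is a pseudo-torsor under $O(n-1)$. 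Second, $f$ is surjective in the fppf topology, so the pseudo-torsor becomes a torsor and $\bar f$ is an isomorphism.

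For the surjectivity we reduce to the "in particular" statement by fpqc descent: $X_q\row\Spec\Field$ is itself faithfully flat since $X_q$ is an $O(n)$-torsor. After base change to $X_q$, the torsor trivializes, $q$ becomes isomorphic to $q_n$, and $A_q$ becomes $A_{q_n}=I(q_1,q_n)$, so the claim collapses to $O(n-1)\backslash O(n)\cong I(q_1,q_n)$, i.e. to the transitivity of $O(n)$ on $A_{q_n}$ with stabilizer of $e_1$ equal to $O(n-1)$. Equivalently, given a $T$-point $v\in A_q(T)$, one writes $q_T\cong q|_{v^\perp}\perp\langle 1\rangle$ and $q_n\cong q_{n-1}\perp\langle 1\rangle$; étale-locally on $T$, the form $q|_{v^\perp}$ diagonalizes and each diagonal coefficient becomes a square, producing an isometry $q\row q_n$ carrying $v$ to $e_1$.

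The main obstacle is precisely this last point: verifying that the map $O(n)\row A_{q_n}$, $g\mapsto g^{-1}(e_1)$, is étale-locally surjective. This is the standard (but non-trivial) fact that a unimodular vector on the affine quadric $\{q_n=1\}$ can be completed to an orthogonal basis after an étale extension, relying on étale-local diagonalizability of non-degenerate quadratic forms together with extraction of square roots. Once this is in place, Steps 1–3 above close the argument, and the "in particular" assertion is the special case $q=q_n$, where the canonical point $\op{id}\in X_{q_n}(\Field)$ identifies $X_{q_n}$ with $O(n)$.
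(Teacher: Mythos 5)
Your proof is correct and follows the same route as the paper, whose entire argument is to exhibit the map $x\mapsto x^{-1}(0,\ldots,0,1)$ (you use $e_1$ in place of the last basis vector, which is immaterial). You simply supply the verifications the paper omits: the $O(n-1)$-invariance, the pseudo-torsor identity $O(n-1)\times X_q\cong X_q\times_{A_q}X_q$, and the \'etale-local surjectivity via Witt extension, all of which are sound.
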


\begin{proof}
An isomorphism is given by: $x\mapsto x^{-1}(0,...,0,1)$.
\end{proof}

%%%%%%%%%%%%%%%%%%%%%%%%%%%%%%%%%%%%%%%%%%%%%%%%%%%%%%%%%%%%%%%%%%%%%%%%%%%%%%%%%%%%%%%%%%%%%%%%%%%%

\begin{prop}
\label{motAff}
Let $M:\mathcal H\row\dm{\Field}$ be the motivic functor, $A=I(q_1,q_n)$. Then
$$
M(A)=\zz\oplus\zz([n/2])[n-1].
$$
\end{prop}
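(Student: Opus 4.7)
The plan is to induct on $n$, using $\sqrt{-1}\in\Field$ to put $q_n$ into hyperbolic form. The base cases are immediate: $I(q_1,q_1)=\{z^2=1\}\cong\Spec(\Field)\sqcup\Spec(\Field)$ has motive $\zz\oplus\zz$, and after the substitution $u=x_1+ix_2$, $v=x_1-ix_2$ the variety $I(q_1,q_2)$ becomes $\{uv=1\}\cong\gm$ with motive $\zz\oplus\zz(1)[1]$; both agree with the claimed formula.

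For the inductive step with $n\geq 3$, write $q_n=uv+q_{n-2}(w)$ with $w\in\aaa^{n-2}$ and decompose $A = \{uv+q_{n-2}(w)=1\}$ as $U\sqcup Z$, where $U=A\cap\{u\neq 0\}$ is open and $Z=A\cap\{u=0\}$ is a smooth divisor. Solving $v=(1-q_{n-2}(w))/u$ identifies $U\cong\gm\times\aaa^{n-2}$, so $M(U)=\zz\oplus\zz(1)[1]$. The divisor equals $Z=I(q_1,q_{n-2})\times\aaa^1$, so $M(Z)=\zz\oplus\zz([(n-2)/2])[n-3]$ by the inductive hypothesis. Using $[(n-2)/2]+1=[n/2]$, the Gysin triangle for $Z\subset A$ (of codimension one) then reads
$$
\zz\oplus\zz(1)[1]\lrow M(A)\lrow \zz(1)[2]\oplus\zz([n/2])[n-1]\xrightarrow{\partial}\zz[1]\oplus\zz(1)[2].
$$

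The components of $\partial$ landing in $\zz[1]$ vanish for bidegree reasons (the relevant groups $\Hom_{\dm{\Field}}(\zz(i)[j],\zz[1]) = \hm^{1-j,-i}(\Spec\Field,\zz)$ sit in weight $< 0$), so $\partial$ factors through the $\zz(1)[2]$-summand of the target. The main obstacle is to identify $\partial$ restricted to this summand: by the residue description of the Gysin boundary in motivic cohomology, it agrees with the valuation map
$$
\hm^{1,1}(U) = \mathcal{O}(U)^{\times} = \Field^{\times}\oplus\zz\cdot u \lrow \hm^{0,0}(Z,\zz)
$$
sending $u\mapsto 1$ on each connected component of $Z$, because $u$ is a local parameter for the divisor and its conormal class $du$ trivializes the normal bundle. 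For $n\geq 4$ the factor $I(q_1,q_{n-2})$ is irreducible, so $Z$ is irreducible and the restricted map is the identity on the $\zz\cdot u$ part; for $n=3$ one has $Z=\aaa^1\sqcup\aaa^1$ and the map becomes $(1,1):\zz(1)[2]^{\oplus 2}\to\zz(1)[2]$, whose kernel supplies exactly the required $\zz(1)[2]=\zz([3/2])[2]$ summand. In either situation the cone of $\partial[-1]$ collapses: the $\zz(1)$-pieces of $M(U)$ and of the source of $\partial$ cancel, leaving $M(A) = \zz\oplus\zz([n/2])[n-1]$ and completing the induction.
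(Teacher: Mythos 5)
Your argument is correct, but it takes a genuinely different route from the paper's. The paper compactifies: it realizes $A$ as the complement of the hyperplane section $Q=\{q_n=0\}$ inside the projective quadric $R=\{x_0^2=q_n\}$, writes the Gysin triangle $M(A)\to M(R)\xrightarrow{j} M(Q)(1)[2]$, and reads off $M(A)=\op{Cone}[-1](j)$ from the known Tate decompositions of motives of split projective quadrics (both $R$ and $Q$ are split because $\sqrt{-1}\in\Field$). You instead stay affine and induct on $n$, peeling off a hyperbolic plane $uv$ and applying the Gysin triangle to the divisor $Z=\{u=0\}$, with the boundary map pinned down by the divisor/valuation interpretation of $\partial^*:\hm^{1,1}(U,\zz)\to\hm^{0,0}(Z,\zz)$ evaluated on the unit $u$. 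The paper's version buys brevity at the cost of invoking Rost's decomposition of split quadrics and, implicitly, the surjectivity of $j$ onto the relevant Tate summands; yours buys self-containedness — the only inputs are the Gysin triangle, homotopy invariance, cancellation, and vanishing of motivic cohomology in negative weight — at the cost of the explicit residue computation and the separate treatment of $n=3$, where $Z$ becomes reducible and the boundary is the fold map $(1,1)$. Both arguments are complete; your base cases and the bookkeeping $[(n-2)/2]+1=[n/2]$, $(n-3)+2=n-1$ check out.
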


\begin{proof} Consider $r=\la -1\ra \perp q_n$ with the respective quadric $R$. Then
$Q\subset R$ is a codimension one subquadric with complement $A$.
In $\dm{\Field}$ we have Gysin's exact triangle:
$$
M(A) \xrightarrow{} M(R) \xrightarrow{j} M(Q)(1)[2]\xrightarrow{}M(A)[1].
$$
Since the quadrics $R$ and $Q$ are split, $\op{Cone}[-1](j)=\zz\oplus\zz([n/2])[n-1]$ (we use
the fact that $\sqrt{-1}\in \Field$.).
\end{proof}

%%%%%%%%%%%%%%%%%%%%%%%%%%%%%%%%%%%%%%%%%%%%%%%%%%%%%%%%%%%%%%%%%%%%%%%%%%%%%%%%%%%%
\subsubsection{}
\label{MotOverSimplicialSchemasSSS}
Let $Y_\bullet \in \Spc$, $R$ be a commutative ring. The category
$\dm{Y_\bullet}$ is introduced in \cite{VoMSS}. The notation there is slightly different (minus is a subscript),
but we prefer to denote it the above way, since it reflects the fact that the cohomological indices of
the non-trivial terms of a complex are bounded from above. This is coherent with the derived category
notations.
We need the category $\dmR{Y_\bullet}{R}$. This category is not introduced in \cite{VoMSS}, but it is
mentioned there in \S 7 that all the results can be extended to the case with coefficients. So, we will use the $R$-analogues referring to the respective $\zz$-formulations.

By definition, $\dmR{Y_\bullet}{R}$ is the localization of $D(Y_\bullet, R)$ \cite[Def 4.2]{VoMSS},
that is (\cite[after Lemma 2.3]{VoMSS}), of the derived category $D^{-}(\PST(Y_\bullet, R))$,
where $\PST(Y_\bullet, R)$ is the category of the presheaves of $R$-modules with transfers on $Y_\bullet$.
The objects of $\dmR{Y_\bullet}{R}$ are complexes
$\dots \to C_{i+1} \to C_i \to \dots \to 0 \to 0 \to \dots$,
where $C_j\in \PST(Y_\bullet, R)$.

The category $\PST(Y_\bullet, R)$ is defined as in \cite[Def 2.1]{VoMSS}) with the replacement of Abelian groups  by $R$-modules.
An object $K\in\PST(Y_\bullet, R)$ is represented by the system $\{K_n,f_{\theta}\}$, where
$K_n\in \PST(Y_n, R)$ and $f_{\theta}:(Y_{\theta}^*)(K_n)\row K_m$, for
$\theta:[n]\row [m]$ is a coherent system of arrows.

We need also functors
$$
r_i^*: \dmR{Y_\bullet}{R} \to \dmR{Y_i}{R}, \quad
r_i^*(N) = N_i.
$$

Following \cite{VoMSS}, for a space $Y_{\bullet}$ we denote as $CC(Y_{\bullet})$
the simplicial set, where $CC$ is the functor commuting with the coproducts and sending
a connected scheme to the point.

%%%%%%%%%%%%%%%%%%%%%%%%%%%%%%%%%%%%%%%%%%%%%%%%%%%%%%%%%%%%%%%%%%%%%%%

\begin{prop}
\label{gradTate}
Suppose that $H^1(CC(Y_\bullet), R^{\times})=0$.
Let $M = T(u)[v] \in \dmR{S}{R}$ be the Tate-motive.
Let $N\in\dmR{Y_\bullet}{R}$ be such a motive
that its graded components $N_i\in\dmR{Y_i}{R}$ are isomorphic to $M$ and all the structure maps $N_{\theta}:LY_{\theta}^*(N_i)\row N_j$
are isomorphisms (here $\theta:[i]\row [j]$ is a simplicial map).
Then $N$ is isomorphic to $M$.
\end{prop}

\begin{proof}
Consider the conjugate pair of functors (see \cite{VoMSS}):
$$
Lc_{\#}:\dmR{Y_{\bullet}}{R}\row\dmR{S}{R}\hspace{3mm}\text{and}\hspace{3mm}
c^*:\dmR{S}{R}\row\dmR{Y_{\bullet}}{R}.
$$
Objects $N$ with the property that all the structural maps $N_{\theta}$ are isomorphisms form a full localizing subcategory
$\dmcohR{Y_{\bullet}}{R}$ of "coherent objects" of $\dmR{Y_{\bullet}}{R}$.
By Proposition \ref{cohPost} below, an  object $N$ of this subcategory has a functorial (increasing) filtration
$(N)_{\leq n}$ with graded pieces $(N)_n\cong Lr_{n,\#}r_n^*(N)[n]$ (here $(N)_n=\op{Cone}((N)_{\leq n-1}\row N_{\leq n})$) and
this filtration "converges" in the sense that the sequence
$$
\oplus_n(N)_{\leq n}\stackrel{id-i}{\lrow}\oplus_n(N)_{\leq n}\row N
$$
extends to a distinguished triangle
(here $i:(N)_{\leq n}\row (N)_{\leq n+1}$ is the map from the filtration data).
Applying the functor $Lc_{\#}$, we get a functorial in $N$ filtration $(Lc_{\#}N)_{\leq n}$ on $Lc_{\#}N$ with graded pieces
$(Lc_{\#}N)_n\cong Lc_{n,\#}r_n^*(N)[n]$ and the property that $Lc_{\#}N$ is isomorphic to
$\op{Cone}(\oplus_n(Lc_{\#}N)_{\leq n}\stackrel{id-i}{\lrow}\oplus_n(Lc_{\#}N)_{\leq n})$.

In our situation, $(Lc_{\#}N)_n\cong M(Y_n)(u)[v+n]$. Denoting as $(Lc_{\#}N)_{>n}$ the cone
$\op{Cone}((Lc_{\#}N)_{\leq n}\row Lc_{\#}N)$ and as $(Lc_{\#}N)_{m\geq *>n}$ the cone
$\op{Cone}((Lc_{\#}N)_{\leq n}\row (Lc_{\#}N)_{\leq m})$, we get the isomorphism
$(Lc_{\#}N)_{>n}\cong\op{Cone}(\oplus_{m>n}(Lc_{\#}N)_{m\geq *>n}\stackrel{id-i}{\lrow}\oplus_{m>n}(Lc_{\#}N)_{m\geq *>n})$, for $m\geq n$.
From this, using the fact that $(Lc_{\#}N)_{m\geq *>n}$ is an extension of $(Lc_{\#}N)_k$, for $m\geq k>n$,
and the fact that for a smooth variety $Z$ over $S$, $\Hom_{\dmR{S}{R}}(Z,T[j])=0$, for $j<0$, we get:
\begin{equation*}
\begin{split}
&\Hom_{\dmR{S}{R}}((Lc_{\#}N)_{>0},T(u)[v])=0;\\
&\Hom_{\dmR{S}{R}}((Lc_{\#}N)_{>1},T(u)[v+k])=0,\hspace{2mm}\text{for}\,k=0,1.
\end{split}
\end{equation*}
Then we have an exact sequence:
$$
0\row\Hom(Lc_{\#}(N),T(u)[v])\row\Hom(Lc_{\#,0}(N_0),T(u)[v])\row\Hom(Lc_{\#,1}(N_1),T(u)[v]).
$$
And the same is true with $N$ replaced by $M$ (since this object is also "coherent").

Since each $N_n$ can be identified with the $(T(u)[v])_n$,  we can identify the group\\
$\Hom_{\dmR{S}{R}}(Lc_{\#,n}(N_n),T(u)[v])$ with $\Hom_{\dmR{S}{R}}(M(Y_n),T)$ which is a free $R$-module of
the rank equal to the number of connected components of $Y_n$. Let us choose a basis (supported on connected components).
For each $\theta:[i]\row [j] \in Mor(\Delta)$, we
have the natural map $N_{\theta}:(LY_{\theta}^*)(N_i)\row N_j$.
So, the choice of $\theta$ and of connected component of $Y_j$ gives us an element of
$R^{\times}$ (as an automorphism of a Tate-motive on a connected variety). Thus, the obstruction to
identifying the $\Hom(Lc_{\#}(N),T(u)[v])$ with $\Hom(Lc_{\#}(M),T(u)[v])$ lies in
$\Hom_{gr}(\pi_1(CC(Y_{\bullet})),R^{\times})$.
Since $H^1(CC(Y_\bullet), R^{\times})=0$, the above exact sequences for $M$ and $N$ can be identified. In particular,
$\Hom(Lc_{\#}(N),T(u)[v])$ can be identified with $\Hom(Lc_{\#}(M),T(u)[v])$,
and we get a map $\psi:Lc_{\#}(N)\row T(u)[v]$ such that the composition of it with the natural map $(Lc_{\#}(N))_0\row Lc_{\#}(N)$
gives an element in $\Hom_{\dmR{S}{R}}(M(Y_0),T)$ whose coordinates corresponding to all the connected components of $Y_0$
are invertible. By conjugation, it gives us the map
$\ffi:N\row T(u)[v]$ in $\dmR{Y_{\bullet}}{R}$. From the construction, $\ffi_0$ is an isomorphism. Since all the structure maps
$N_{\theta}$ and $T(u)[v]_{\theta}$ are isomorphisms, all the maps $\ffi_i$ are isomorphisms as well, and so is $\ffi$ by
\cite[Lemma 4.4]{VoMSS}.
\end{proof}

In particular, the above result works if $R=\zz/2$, or if $R$ is a field of characteristic $p$
and $\pi_1(CC(Y_{\bullet}))$ is a $p$-group.

\begin{prop}
\label{coh-cond}
Let $N$ be an object of $\dmR{Y_\bullet}{R}$. Then the following conditions are equivalent:
\begin{itemize}
\item[$1)$ ] For every $\theta:[i]\row[j]$, the structure map $N_{\theta}:LY_{\theta}^*(N_i)\row N_j$ is an isomorphism.
\item[$2)$ ] For every $i$, the natural map $Lr_{i,\#}r_i^*(N)\stackrel{\mu_i}{\lrow} N\stackrel{L}{\otimes} Lr_{i,\#}r_i^*(\laa)$
is an isomorphism (here $\laa$ is the unit object of our tenzor triangulated category).
\end{itemize}
\end{prop}

\begin{proof}
The $j$-th component of $Lr_{i,\#}r_i^*(N)$ can be identified with the $\oplus_{\theta:[i]\row[j]}LY_{\theta}^*(N_i)$, while
the $j$-th component of $N\stackrel{L}{\otimes} Lr_{i,\#}r_i^*(\laa)$ can identified with the
$\oplus_{\theta:[i]\row[j]}N_j$ and $r_j^*(\mu_i)$ with the $\oplus_{\theta:[i]\row[j]}N_{\theta}$.
Since the morphism in $\dmR{Y_\bullet}{R}$ is an isomorphism if and only if all of its graded components are
(\cite[Lemma 4.4]{VoMSS}), we obtain the result.
\end{proof}

\begin{defi}
\label{coh-cat}
Denote as $\dmcohR{Y_{\bullet}}{R}$ the full localizing subcategory of $\dmR{Y_{\bullet}}{R}$ consisting of objects satisfying the
conditions of Proposition \ref{coh-cond}.
\end{defi}

\begin{prop}
\label{cohPost}
On an object $N$ of $\dmcohR{Y_{\bullet}}{R}$ we have a functorial increasing filtration $(N)_{\leq n}$ with the graded pieces
$(N)_n\cong Lr_{n,\#}r_n^*(N)[n]$. This filtration "converges" in the sense that the sequence
$$
\oplus_n(N)_{\leq n}\stackrel{id-i}{\lrow}\oplus_n(N)_{\leq n}\row N
$$
extends to a distinguished triangle.
\end{prop}

\begin{proof}
It follows from \cite[Lemma 3.9]{VoMSS} that the unit object $\laa$ of our tenzor triangulated category has the respective filtration.
Applying the $N\stackrel{L}{\otimes}$ functor we get the functorial "converging" filtration on $N$ with graded pieces isomorphic to
$N\stackrel{L}{\otimes}Lr_{n,\#}r_n^*(\laa)[n]$. Since $N$ belongs to $\dmcohR{Y_{\bullet}}{R}$, these graded pieces can be
(functorially) identified with $Lr_{n,\#}r_n^*(N)[n]$.
\end{proof}

%%%%%%%%%%%%%%%%%%%%%%%%%%%%%%%%%%%%%%%%%%%%%%%%%%%%%%%%%%%%%%%%%%%%%%%%%%%%%%%%%%%%%%%%%%%%%%%%%%%%%%%%%%%%
\subsubsection{Proof of Theorem \ref{u}.}
\label{ProofUTheoremSSS}
Set $u_0=1$ and use induction on $n$.
As the base take $n=0$.
For $n>0$, consider the transition $(n-1) \to n$.
Identifying $q_n=q_{n-1}\perp\la 1\ra$, we get an embedding
$O(n-1)\hookrightarrow O(n)$.
Denote as $\btzar{O(n-1)}$ (respectively, $\bttzar{O(n-1)}$) the quotient
$O(n-1)\backslash(\ezar{O(n-1)}\times\ezar{O(n)})$
with the diagonal action (respectively, $O(n-1)\backslash\ezar{O(n)}$) in $\Spc$.
Then we have natural maps in $\Spc$
(see the very beginning of \ref{TorsorsClassificationOnHsLevelSS}):
$$
\bzar{O(n-1)}\stackrel{\ffi}{\llow}\btzar{O(n-1)}
\stackrel{\psi}{\lrow}\bttzar{O(n-1)}
$$

Then $\ffi$ is an isomorphism in $\hsF{\Field}$. Indeed, over each graded component of $\bzar{O(n-1)}$ this is a trivial fibration
with the contractible fiber $\ezar{O(n)}$. In particular, it induces an
isomorphism on motivic cohomology.
On the other hand, the fiber of $\psi$ over a point of a graded component of $\bttzar{O(n-1)}$ is a \v{C}ech simplicial scheme $\hiq{X}$,
corresponding to such $O(n-1)$-tosor $X$ whose extension to $O(n)$ is split, so $X$ itself is split as an $O(n-1)$-torsor. Hence, $\psi$ is
an isomorphism in $\hsF{\Field}$ as well, and induces an isomorphism on motivic cohomology.

We have a natural fibration $\bttzar{O(n-1)}\row\bzar{O(n)}=O(n)\backslash\ezar{O(n)}$
with fibers $O(n-1)\backslash O(n)$, which is trivial over the simplicial components.

Let $M(\bttzar{O(n-1)}\row\bzar{O(n)})\stackrel{g}{\row}T$ be the natural projection in
$\dmDVA{\bzar{O(n)}}$. Then it follows from Lemmas \ref{Affquad}, \ref{motAff}, and \ref{gradTate}
(taking into account that our coefficients are $\zz/2$)
that $\op{Cone}[-1](g)=T([n/2])[n-1]$. Recalling the fact about $\psi$ above, we
obtain an exact triangle in $\dmR{\bzar{O(n)}}{\zz/2}$:
$$
\xymatrix @-1.7pc{
& M(\bttzar{O(n-1)}\row\bzar{O(n)}) \ar @{->}[rddd]^(0.5){g_n} &  \\
& & \\
& \star & \\
T([n/2])[n] \ar @{->}[ruuu]_(0.5){[1]}^(0.5){h_n} &  & T
\ar @{->}[ll]^(0.5){f_n}.
}
$$
Using the property of $\ffi$ we get an induced diagram:
$$
\xymatrix @-1.7pc{
& \hm^{*,*'}(\bzar{O(n-1)},\zz/2) \ar @{->}[lddd]^(0.5){[1]}_(0.5){h_n^*} &  \\
& & \\
& \star & \\
\hm^{*-n,*'-[n/2]}(\bzar{O(n)},\zz/2) \ar @{->}[rr]_(0.5){f_n^*} &  &
\hm^{*,*'}(\bzar{O(n)},\zz/2)
\ar @{->}[luuu]_(0.5){g_n^*},
}
$$
where $f_n^*$ is multiplication by some non-zero
$u_n\in\hm^{n,[n/2]}(\bzar{O(n)},\zz/2)$.
By induction,
$\hm^{*,*'}(\bzar{O(n-1)},\zz/2)$ is generated by $u_1,\ldots,u_{n-1}$.
Since $\hm^{?,<0}(\bzar{O(n)},\zz/2)=0$, and $f_n^*$ is injective on $\hm^{0,0}(\bzar{O(n)},\zz/2)=\zz/2$,
we get that $h_n^*(u_i)=0$, for $i=0,\ldots,n-1$.
In particular, $u_i,i=1,\ldots n-1$ can be uniquely lifted to $\hm^{*,*'}(\bzar{O(n)},\zz/2)$.
Since $g_n^*$ is a ring homomorphism, it is surjective.
Hence, $h_n^*=0$ and $\hm^{*,*'}(\bzar{O(n)},\zz/2)=H[u_1,\ldots,u_n]$.

Let us compare $u_i$'s with $\omega_i$'s and $c_i$'s.
Start with $n=1$ case:
$\bzar{O(1)}=\bzar{(\zz/2)}$. Let $\bzar{O(1)}\stackrel{\eps}{\row}\bet{O(1)}$ be
the Nisnevich-\'etale $\hs$-map, and $\omega_1\in\hm^{1,1}(\bet{O(1)},\zz/2)$,
$c_1\in\hm^{2,1}(\bet{O(1)},\zz/2)$ be the usual Stiefel--Whitney and Chern classes.
Then $\eps^*(\omega_1)=\tau\cdot u_1+\{a\}$, for some $\{a\}\in K^M_1(\Field)/2$.
But the only homotopic rational point $\bullet$ of $\bzar{O(1)}$ is mapped to the fixed rational
point of $\bet{O(1)}$ (corresponding to the trivial torsor $\Iso(\la 1\ra\row\la 1\ra)$), so the restriction
of $\omega_1$ to this point is zero. On the other hand, it is equal to $\{a\}$. Thus,
$\eps^*(\omega_1)=\tau\cdot u_1$.
Analogously, $\eps^*(c_1)=\tau\cdot u_1^2+\{b\}\cdot u_1$, for some $\{b\}\in K^M_1(\Field)/2$.
Since $\tau\cdot c_1=\omega_1^2$ (as $-1$ is a square in $\Field$),
we obtain that $\{b\}=0$ and $\eps^*(c_1)=\tau\cdot u_1^2$.

In particular, this shows that $\omega_i$ restricts non-trivially to
$\hm^{*,*'}(\bzar{\left(\times_{i=1}^nO(1)\right)},\zz/2)$.
Comparing the Nisnevich and \'etale classificators for $O(i)$ and $O(i-1)$
we obtain that $\eps^*(\omega_i)$ is divisible by $u_i$. By degree consideration,
we must have $\eps^*(\omega_i)=\tau^{[\frac{i+1}{2}]}\cdot u_i$. But looking at the
restriction to $\hm^{*,*'}(\bzar{\left(\times_{i=1}^nO(1)\right)},\zz/2)$ we also obtain:

%%%%%%%%%%%%%%%%%%%%%%%%%%%%%%%%%%%%%%%%%%%%%%%%%%%%%%%%%%%%%%%%
\begin{prop}
\label{i111n}
Let $\left(\times_{j=1}^nO(1)\right)\stackrel{\delta}{\row} O(n)$ be the standard embedding.
Then
$$
\delta^*(u_i)=\tau^{[i/2]}\cdot\sigma_i(x_1,\ldots,x_n),
$$
where $x_j$ is $u_1$ from the $j$-th
component, and $\sigma_i$ is the $i$-th elementary symmetric function.
In particular, the map
$\delta^*:\hm^{*,*'}(\bzar{O(n)},\zz/2)\row\hm^{*,*'}(\bzar{\left(\times_{i=1}^nO(1)\right)},\zz/2)$
is injective.
\end{prop}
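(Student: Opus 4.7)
The plan is to transfer the classical Whitney-sum formula for Stiefel--Whitney classes from the \'etale to the Nisnevich classifying space via $\eps^*$, then to cancel the $\tau$-factors using the relation $w_i=\tau^{[(i+1)/2]}u_i$ from Theorem \ref{u}, and finally to read off injectivity from algebraic independence of the elementary symmetric functions.

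First I would invoke the splitting formula on the \'etale side: the induced map $\delta_{et}:\bet{(\times_{j=1}^n O(1))}\to\bet{O(n)}$ satisfies
\[
\delta_{et}^*(w_i)=\sigma_i\bigl(w_1^{(1)},\ldots,w_1^{(n)}\bigr)
\]
in $\hm^{*,*'}(\bet{(\times_{j=1}^n O(1))},\zz/2)$, where $w_1^{(j)}$ denotes the first Stiefel--Whitney class of the $j$-th factor. This is the motivic analogue of the classical Whitney-sum identity, available from the Esnault--Kahn--Viehweg realisation \cite{EKV}, and reflects the fact that the tautological $O(n)$-bundle pulls back along $\delta$ to the direct sum of the $n$ tautological $O(1)$-line bundles.

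Next, naturality of $\eps^*$ together with the identity $\eps^*(w_1)=\tau\cdot u_1$ already established for $n=1$ in the proof of Theorem \ref{u} gives, inside $\hm^{*,*'}(\bzar{(\times_{j=1}^n O(1))},\zz/2)=H[x_1,\ldots,x_n]$,
\[
\delta^*(w_i)=\sigma_i(\tau x_1,\ldots,\tau x_n)=\tau^{i}\,\sigma_i(x_1,\ldots,x_n).
\]
Substituting $w_i=\tau^{[(i+1)/2]}u_i$ from Theorem \ref{u} and applying $\delta^*$, we obtain
\[
\tau^{[(i+1)/2]}\,\delta^*(u_i)=\tau^{i}\,\sigma_i(x_1,\ldots,x_n).
\]
By the Milnor conjecture invoked at the start of \S\ref{MotCohBOn}, $H=K^M_*(\Field)/2[\tau]$ with $\tau$ a free polynomial variable, so $\tau$ is a non-zero-divisor in $H[x_1,\ldots,x_n]$; cancelling $\tau^{[(i+1)/2]}$ and using the elementary identity $i-[(i+1)/2]=[i/2]$ yields the claimed $\delta^*(u_i)=\tau^{[i/2]}\sigma_i(x_1,\ldots,x_n)$.

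For the injectivity assertion, $\delta^*$ is the $H$-algebra morphism $H[u_1,\ldots,u_n]\to H[x_1,\ldots,x_n]$ sending $u_i\mapsto \tau^{[i/2]}\sigma_i$. Since $H[x_1,\ldots,x_n]$ is free of rank $n!$ over $H[\sigma_1,\ldots,\sigma_n]$, the elementary symmetric functions $\sigma_1,\ldots,\sigma_n$ are algebraically independent over $H$; combined with $\tau$ being a non-zero-divisor in $H$, the elements $\tau^{[i/2]}\sigma_i$ are also algebraically independent over $H$, and so $\delta^*$ is injective. The only substantive external input is the \'etale Whitney-sum formula of the first step; everything after is formal manipulation in the polynomial ring $H[u_1,\ldots,u_n]$.
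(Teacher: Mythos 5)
Your proof is correct and follows essentially the same route as the paper: the paper's one-line argument ("the formula is clear from the description of $\delta^*\eps^*(\omega_i)$") is precisely your computation, resting on the \'etale Whitney-sum formula, the relation $\eps^*(\omega_i)=\tau^{[(i+1)/2]}u_i$ established just before the Proposition, cancellation of the non-zero-divisor $\tau$, and algebraic independence of the $\sigma_i$ over $H$ for injectivity. No substantive difference to report.
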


\begin{proof}
The formula is clear from the description of $\delta^*\eps^*(\omega_i)$.
It remains to observe that these elements are algebraically independent over $H$.
\end{proof}

Taking into account that $c_i$ restricted to $\bet{\left(\times_{i=1}^nO(1)\right)}$ is
equal to the $i$-th elementary symmetric function in $c_1$'s from components (and the fact that
$2=0$), we obtain also that $\eps^*(c_i)$ is either $u_i^2$, or $\tau\cdot u_i^2$, depending on
parity.
Theorem \ref{u} is proven.
\Qed

%%%%%%%%%%%%%%%%%%%%%%%%%%%%%%%%%%%%%%%%%

\begin{prop}
\label{BOcone}
In the category $\dmDVA{\bzar{O(n)}}$ of motives over $\bzar{O(n)}$ with $\zz/2$-coefficients
we have:
$$
M(\ezar{O(n)}\row\bzar{O(n)})=\otimes_{i=1}^n
\op{Cone}[-1]\left(\tate{}\stackrel{u_i}{\row}\tate{}([i/2])[i]\right),
$$
where $\tate{}=\tate{\bzar{O(n)}}$ is the Tate-motive.
\end{prop}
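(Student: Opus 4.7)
The plan is to prove the identity by induction on $n$, factoring $\ezar{O(n)}\row\bzar{O(n)}$ through the intermediate space $\bttzar{O(n-1)}=O(n-1)\backslash\ezar{O(n)}$ that already appears in the proof of Theorem \ref{u}. Throughout, let $C_i:=\op{Cone}[-1]\bigl(\tate{}\stackrel{u_i}{\row}\tate{}([i/2])[i]\bigr)$, understood over whichever base is clear from context. The base case $n=0$ is immediate (both sides are $\tate{}$). For $n\geq 1$ decompose the morphism as $\ezar{O(n)}\stackrel{g}{\row}\bttzar{O(n-1)}\stackrel{f}{\row}\bzar{O(n)}$ and compute $M(f)$ and $M(g)$ separately.

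The motive $M(f)$ has essentially been extracted inside the proof of Theorem \ref{u}: the fibration $\bttzar{O(n-1)}\row\bzar{O(n)}$ has fibre $O(n-1)\backslash O(n)\cong A_{q_n}$, and combining Propositions \ref{Affquad}, \ref{motAff} and \ref{gradTate} in the presence of $\zz/2$-coefficients produces the exact triangle
$$\tate{}([n/2])[n-1]\row M(f)\row\tate{}\stackrel{u_n}{\row}\tate{}([n/2])[n]$$
in $\dmDVA{\bzar{O(n)}}$; that is, $M(f)=C_n$.

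For $M(g)$, observe that the $\hs$-equivalences $\bttzar{O(n-1)}\stackrel{\psi}{\low}\btzar{O(n-1)}\stackrel{\ffi}{\row}\bzar{O(n-1)}$ identify the map $g$ with the canonical classifying arrow $\ezar{O(n-1)}\row\bzar{O(n-1)}$, since $\ezar{O(n)}$ and $\ezar{O(n-1)}$ are both $\hs$-contractible free $O(n-1)$-spaces. The inductive hypothesis, transferred across this equivalence, gives $M(g)=\otimes_{i=1}^{n-1}C_i$ in $\dmDVA{\bttzar{O(n-1)}}$, with the $u_i$'s taken in $\hm^{*,*'}(\bzar{O(n-1)},\zz/2)$. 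By the very construction of the $u_i$'s in Theorem \ref{u}, the classes $u_i\in\hm^{*,*'}(\bzar{O(n)},\zz/2)$ for $i<n$ are the lifts along $g_n^*$ of the previous ones, hence pull back along $f$ to the $u_i$'s on $\bttzar{O(n-1)}$. Therefore $M(g)=f^*\bigl(\otimes_{i=1}^{n-1}C_i\bigr)$, now with the factors living in $\dmDVA{\bzar{O(n)}}$.

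Finally, $M(\ezar{O(n)}\row\bzar{O(n)})=Lf_{\#}(M(g))$, and the projection formula together with $Lf_{\#}(\tate{}_{\bttzar{O(n-1)}})=M(f)=C_n$ gives
$$M(\ezar{O(n)}\row\bzar{O(n)})=\bigl(\otimes_{i=1}^{n-1}C_i\bigr)\otimes C_n=\otimes_{i=1}^{n}C_i,$$
completing the induction. The main technical obstacle is ensuring that the projection formula $Lf_{\#}(f^*A)=A\otimes Lf_{\#}(\tate{})$ is available in the relative motivic category $\dmR{-}{\zz/2}$ of \cite{VoMSS} over simplicial base spaces; it is expected from the general six-functor formalism but deserves a precise citation. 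The compatibility of the $u_i$'s under the restriction $\bzar{O(n-1)}\hookrightarrow\bzar{O(n)}$ is built into the construction in Theorem \ref{u}, and so poses no additional difficulty.
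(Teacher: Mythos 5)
Your proposal is correct and follows essentially the same route as the paper's (very terse) proof: induction on $n$ through the intermediate space $\bttzar{O(n-1)}$, with $M(\bttzar{O(n-1)}\row\bzar{O(n)})=\op{Cone}[-1](\tate{}\stackrel{u_n}{\row}\tate{}([n/2])[n])$ coming from the exact triangle in the proof of Theorem \ref{u}, the inductive hypothesis transported via $\ffi$ and $\psi$, and the fact that the $u_i$ for $i<n$ come from $\bzar{O(n)}$. The projection-formula gluing step you flag is indeed left implicit in the paper, so making it explicit is a reasonable refinement rather than a deviation.
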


\begin{proof}
It follows by induction on $n$ from the exact triangles involving $\btzar{O(n-1)}$ and $\bzar{O(n)}$,
the fact that the functor $\ffi^*:\dmDVA{\bzar{O(n-1)}}\row\dmDVA{\btzar{O(n-1)}}$ maps
$M(\ezar{O(n-1)}\row\bzar{O(n-1)})$ to $M(\ezar{O(n-1)}\times\ezar{O(n)}\row\btzar{O(n-1)})$,
and the fact that
$u_i\in\hm^{i,[i/2]}(\bzar{O(i)},\zz/2)$ comes from $\bzar{O(n)}$.
\end{proof}

We call $u_i$ the {\it subtle Stiefel--Whitney classes}.
Clearly, under the topological realization functor these project to the
topological Stiefel--Whitney classes just as the usual Stiefel--Whitney classes $\omega_i$.
Notice that the motivic cohomology (with $\zz/2$-coefficients) for $\bzar{O(n)}$ look much
simpler than for $\bet{O(n)}$ (computed by N.Yagita in \cite[Theorem 8.1]{YaCF}).

The action of the Steenrod algebra is also as simple as in the topological case
(provided that $-1$ is a square in $\Field$).

%%%%%%%%%%%%%%%%%%%%%%%%%%%%%%%%%%%%%%%%%%%%%%%%%%%%%%%%%%%%%%%%%%%%%%%%%%%%%%%%%%%%%%%%%%%%%%
\begin{prop}
\label{Stinrodu}
$$
Sq^k(u_m)=\sum_{j=0}^k\binom{-(m-k)}{j}u_{k-j}u_{m+j},\hspace{2mm}\text{for}\,\,k\leq m,
$$
and is zero for $k>m$.
\end{prop}

\begin{proof}
For $n=1$ we have $O(1)\cong\zz/2$, $Sq^1(u_1)=u_1^2$, and $Sq^k(u_1)=0$, for $k>1$.
The general case can be obtained
using Proposition \ref{i111n}.
By \cite[\S9]{VoOP}, we have a multiplicative operation
$R^{\bullet}=\sum_i(Sq^{2i}+Sq^{2i+1}\cdot\tau^{\frac{1}{2}})$.
Since $R^{\bullet}(u_1)=u_1+u_1^2\cdot\tau^{\frac{1}{2}}$, and $R^{\bullet}(\tau)=\tau$
(as $-1$ is a square in $k$), we get:
$$
R^{\bullet}(\delta^*(u_m))=\tau^{[m/2]}\cdot
\sigma_m(x_1(1+x_1\tau^{\frac{1}{2}}),\ldots,x_n(1+x_n\tau^{\frac{1}{2}})).
$$
It implies what we need (just as in topology), since $\binom{m-k+j-1}{j}=0$ mod $2$, if $[k/2]+[m/2]\neq [k-j/2]+[m+j/2]$.
\end{proof}

An important role in the computation of the map
$\hm^{*,*'}(\bzar{O(n)},\zz/2)\row\hm^{*,*'}(\hiq{X},\zz/2)$ will be played by the restrictions
induced by the embedding of groups $O(m)\times O(l)\subset O(n)$, where $n=m+l$.

%%%%%%%%%%%%%%%%%%%%%%%%%%%%%%%%%%%%%%%%%%%%%%%%%%%%%%%%%%%%%%%%%%%%%%%%%%%%%%%%%%%%%%
\begin{prop}
\label{Olmn}
The map $\hm^{*,*'}(\bzar{O(n)},\zz/2)\row\hm^{*,*'}(\bzar{(O(m)\times O(l))},\zz/2)$ is given by:
$$
u_r\mapsto\sum_{i=0}^r u_i\otimes u_{r-i}\cdot\tau^{[\frac{r}{2}]-[\frac{i}{2}]-[\frac{r-i}{2}]}.
$$
\end{prop}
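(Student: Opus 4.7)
The plan is to reduce everything to the totally split case and use the injectivity from Proposition \ref{i111n}. The standard embedding $\times_{j=1}^n O(1) \hookrightarrow O(n)$ factors through $O(m)\times O(l)$ via $\big(\times_{j=1}^m O(1)\big)\times \big(\times_{j=1}^l O(1)\big)$, so we get a commutative square of restriction maps between the corresponding $\bzar{}$'s. By Proposition \ref{i111n} applied to $O(n)$, the diagonal map from $\hm^{*,*'}(\bzar{O(n)},\zz/2)$ all the way down to $\hm^{*,*'}(\bzar{(\times_{j=1}^n O(1))},\zz/2)$ is injective, hence it suffices to verify the claimed identity after further restricting to $\hm^{*,*'}(\bzar{(\times_{j=1}^n O(1))},\zz/2)$.

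Before doing so, I need the product formula $\hm^{*,*'}(\bzar{(O(m)\times O(l))},\zz/2)=\hm^{*,*'}(\bzar{O(m)},\zz/2)\otimes_H\hm^{*,*'}(\bzar{O(l)},\zz/2)$, so that the symbols $u_i\otimes u_{r-i}$ make sense and the cohomology ring is the polynomial ring in the $u'_i$ (coming from the first factor) and $u''_j$ (coming from the second). Since $\bzar{(O(m)\times O(l))}=\bzar{O(m)}\times \bzar{O(l)}$ in $\Spc$ and, by Theorem \ref{u}, each factor has motivic $\zz/2$-cohomology which is free polynomial over $H$, a Künneth formula (or a direct repetition of the inductive argument of \ref{ProofUTheoremSSS} for the pair $(m,l)$) yields this identification, and simultaneously identifies the restriction of $u'_i$, $u''_j$ to $\hm^{*,*'}(\bzar{(\times_{j=1}^n O(1))},\zz/2)$ with $\tau^{[i/2]}\sigma_i(x_1,\ldots,x_m)$ and $\tau^{[j/2]}\sigma_j(x_{m+1},\ldots,x_n)$ respectively.

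Now the computation is immediate. The proposed image $\sum_{i=0}^r u_i\otimes u_{r-i}\cdot\tau^{[r/2]-[i/2]-[(r-i)/2]}$ restricts to
\[
\sum_{i=0}^r\tau^{[i/2]}\sigma_i(x_1,\ldots,x_m)\cdot\tau^{[(r-i)/2]}\sigma_{r-i}(x_{m+1},\ldots,x_n)\cdot\tau^{[r/2]-[i/2]-[(r-i)/2]}=\tau^{[r/2]}\sigma_r(x_1,\ldots,x_n),
\]
using the classical identity $\sigma_r(x_1,\ldots,x_n)=\sum_i\sigma_i(x_1,\ldots,x_m)\sigma_{r-i}(x_{m+1},\ldots,x_n)$. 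On the other hand, by Proposition \ref{i111n}, $u_r$ restricts to exactly $\tau^{[r/2]}\sigma_r(x_1,\ldots,x_n)$. The two images coincide, so by the injectivity noted above, the formula holds in $\hm^{*,*'}(\bzar{(O(m)\times O(l))},\zz/2)$.

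The only real subtlety is the Künneth identification in the second paragraph: I expect this to be the main bookkeeping obstacle, since a priori motivic Künneth is not formal. However, given that one of the factors has motivic cohomology which is a free $H$-module concentrated in degrees where a spectral sequence must degenerate for dimensional reasons — or, more robustly, by reading off $\hm^{*,*'}(\bzar{(O(m)\times O(l))},\zz/2)$ directly from Proposition \ref{BOcone} applied iteratively to the embedding $O(m)\times O(l)\hookrightarrow O(n)$, or by mimicking the proof of Theorem \ref{u} in the product setting — this identification is unproblematic, and the rest of the argument is the purely symbolic manipulation above.
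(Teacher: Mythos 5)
Your argument is correct and is essentially the paper's own proof, which reads in its entirety ``It follows immediately from Proposition \ref{i111n}.'' One small precision: the injectivity you actually need in the last step is that of the restriction $\hm^{*,*'}(\bzar{(O(m)\times O(l))},\zz/2)\row\hm^{*,*'}(\bzar{(\times_{j=1}^n O(1))},\zz/2)$, not of the composite from $\bzar{O(n)}$; but this follows from your own K\"unneth identification together with the algebraic independence over $H$ of the elements $\tau^{[i/2]}\sigma_i(x_1,\ldots,x_m)$ and $\tau^{[j/2]}\sigma_j(x_{m+1},\ldots,x_n)$, exactly as at the end of the proof of Proposition \ref{i111n}.
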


\begin{proof}
It follows immediately from Proposition \ref{i111n}.
\end{proof}

%%%%%%%%%%%%%%%%%%%%%%%%%%%%%%%%%%%%%%%%%
\subsection{Towards classification of quadratic forms}
%%%%%%%%%%%%%%%%%%%%%%%%%%%%%%%%%%%%%%%%%

Let us see how the above techniques can be used to classify torsors for an orthogonal group $G=O(n)$,
that is, quadratic forms.

Let $G=O(n)=O(q_n)$, where $q_n=\perp_{i=1}^n\la (-1)^{i-1}\ra$ is the standard split form of
dimension $n$. Everywhere below we assume that $(-1)$ is a square in $\Field$, so this form
coincides with the one considered in \ref{MotCohBOn}.
$G$-torsors are in $1$-to-$1$ correspondence with quadratic forms $q$, where
$X_q:=Iso(q\row q_n)$ is the variety of isomorphisms (of course, it has no rational point unless $q$ is
standard split).  It has the natural left action of $G=Iso(q_n\row q_n)$, as well as the right action
of $G_q=Iso(q\row q)$ (and it is a torsor under both).

To a quadratic form $q$ we can associate the variety of complete isotropic flags $F_q$.
If $\ddim(q)$ is even, then it has the property that, for any field extension $L/\Field$,
$$
F_q(L)\neq\emptyset\LRw q\cong\perp_j\hh\LRw
(X_q)_L\,\,\text{is trivial}\LRw X_q(L)\neq\emptyset.
$$
Thus, by (\ref{HsIsoOfEYToEXSSS}), in the case of even-dimensional $q$, $\hiq{X_q}=\hiq{F_q}$
(canonically).

If $\ddim(q)$ is odd, let $a=\op{det}_{\pm}(q)\in \Field^*/(\Field^*)^2$ be it's signed determinant. Then
$$
F_q(L)\neq\emptyset\,\,\text{and}\,\,X_{\la a\ra}(L)\neq\emptyset
\LRw q\cong(\perp_j\hh)\perp\la 1\ra\LRw
(X_q)_L\,\,\text{is trivial}\LRw X_q(L)\neq\emptyset.
$$
Thus, in this case, $\hiq{X_q}=\hiq{F_q}\times\hiq{X_{\la a\ra}}$, where $a=\op{det}_{\pm}(q)$.

We can see that the object $\hiq{X_q}$ (of $\hsF{\Field}$, or $\hF{\Field}$) itself carries the information of where $q$ is split,
but it does not remember $q$.

\begin{exa}
\label{adelim}
Let $q=\lva a\rva\cdot p$, where $\ddim(p)$ is odd. Then, for any extension $L/\Field$,
$$
q_L\,\,\text{is split}\,\,\LRw\lva a\rva_L\,\,\text{is split}.
$$
Thus, $\hiq{X_q}=\hiq{X_{\lva a\rva}}$.
\end{exa}

Still $\hiq{X_q}$ remembers various interesting things, for example, the $J$-invariant $J(q)$
(see \cite[Definition 5.11]{CGQG} for the definition). Recall, that the $\hs$-map
$\hiq{X_q}\stackrel{\ax{X_q}}{\row}\bzar{O(n)}$ does
remember $q$ itself.
\\

Now we can use $u_i$'s to reconstruct the motive of a torsor $X$ out of the motive of $\hiq{X}$.
We have the following diagram with cartesian squares in $\Spc$:
\begin{equation}
\label{hiXBOnDiagr}
\xymatrix @-0.7pc{
X\times\ezar{O(n)} \ar @{->}[r]  \ar @{->}[d]_(0.5){p}
& \egm{O(n)}\times\ezar{O(n)}  \ar @{->}[r] \ar @{->}[d] &
\ezar{O(n)} \ar @{->}[d]\\
\hiq{X} \ar @{->}[r] & O(n)\backslash(\egm{O(n)}\times\ezar{O(n)}) \ar @{->}[r] &
\bzar{O(n)},
}
\end{equation}
where the map $p$ is given by: $(x,g_0,g_1,\ldots,g_m)\mapsto(g_m^{-1}x,\ldots,g_0^{-1}x)$.

%%%%%%%%%%%%%%%%%%%%%%%%%%%%%%%%%%%%%%%%%%%%%%%%%%%%%%%%%%%%%%%%%%%%%%%%%%%%%%%%%%%%%%%%%%%%%%%%%

\begin{thm}
\label{XhiqX}
Let $X$ be an $O(n)$-torsor. Then in $\dmkDVA$,
$$
M(X)=\otimes_{i=1}^n\op{Cone}[-1]\left(M(\hiq{X})\stackrel{u_i(X)}{\lrow}M(\hiq{X})([i/2])[i]\right).
$$
\end{thm}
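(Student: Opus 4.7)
The plan is to deduce the formula from Proposition \ref{BOcone} by pulling back along the classifying map $\ax{X}:\hiq{X}\to\bzar{O(n)}$ and then pushing down to $\dmkDVA$.

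First, diagram (\ref{hiXBOnDiagr}) consists of cartesian squares, so $X\times\ezar{O(n)}\to\hiq{X}$ is the pullback of $\ezar{O(n)}\to\bzar{O(n)}$ along $\ax{X}$ (using the $\hH$-isomorphism $\bzart{O(n)}\simeq\bzar{O(n)}$ from Proposition \ref{etzar} to identify the classifying target). Base change for motives over simplicial bases yields
$$
M(X\times\ezar{O(n)}\to\hiq{X})=\ax{X}^*M(\ezar{O(n)}\to\bzar{O(n)})
$$
in $\dmDVA{\hiq{X}}$. Since $\ax{X}^*$ is a symmetric monoidal triangulated functor, applying it to Proposition \ref{BOcone} and setting $u_i(X):=\ax{X}^*u_i$ gives
$$
M(X\times\ezar{O(n)}\to\hiq{X})=\bigotimes_{i=1}^n\op{Cone}[-1]\bigl(T\xrightarrow{u_i(X)}T([i/2])[i]\bigr)
$$
in $\dmDVA{\hiq{X}}$, where $T=T_{\hiq{X}}$ denotes the Tate motive over $\hiq{X}$.

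Next we apply $Lc_{\#}$ for the structure morphism $c:\hiq{X}\to\Spec\Field$. The left-hand side becomes $M(X\times\ezar{O(n)})$ in $\dmkDVA$. Since $O(n)$ has the identity as a rational point, the $\hH$-version of \ref{HsIsoOfEYToEXSSS} established in \S2.4 gives $\ezar{O(n)}\simeq S$ in $\hH$, hence $M(X\times\ezar{O(n)})=M(X)$.

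The hard part will be to verify that $Lc_{\#}$ commutes with the $n$-fold tensor product on the right. For each individual cone, $Lc_{\#}$ produces $\op{Cone}[-1]\bigl(M(\hiq{X})\xrightarrow{u_i(X)}M(\hiq{X})([i/2])[i]\bigr)$. However, the tensor product in $\dmkDVA$ of $n$ such cones yields terms of the form $M(\hiq{X})^{\otimes j}$ with Tate twists, while $Lc_{\#}$ of the tensor product in $\dmDVA{\hiq{X}}$ produces terms with only a single copy of $M(\hiq{X})$. The key observation resolving this is that $\hiq{X}$ is a subobject of $S$ in $\hH$ (by \ref{EXIsSubPointinsHSSS} and its $\hH$-extension in \S2.4), so the diagonal $\hiq{X}\to\hiq{X}\times\hiq{X}$ is an $\hH$-isomorphism and $M(\hiq{X})$ is tensor-idempotent: $M(\hiq{X})^{\otimes j}=M(\hiq{X})$ for all $j\geq 1$. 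This collapse of powers yields term-by-term agreement, and functoriality of the differentials completes the identification, producing the claimed formula.
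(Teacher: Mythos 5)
Your proposal is correct and takes essentially the same route as the paper: pull back the decomposition of Proposition \ref{BOcone} along the (monoidal) restriction functor $\dmDVA{\bzar{O(n)}}\row\dmDVA{\hiq{X}}$ using the cartesian squares of (\ref{hiXBOnDiagr}), then apply the forgetful functor to $\dmkDVA$, which respects tensor products precisely because the diagonal $M(\hiq{X})\row M(\hiq{X})\otimes M(\hiq{X})$ is an isomorphism. The ``hard part'' you flag is resolved exactly as in the paper, which cites \cite[Lemma 6.8, Example 6.3]{VoMSS} for the tensor-idempotence of $M(\hiq{X})$ that you derive from $\hiq{X}$ being a subobject of the point.
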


\begin{proof}
It follows from (\ref{hiXBOnDiagr}) that $M(X\times\ezar{O(n)}\row\hiq{X})$
is just the pull-back of $M(\ezar{O(n)}\row\bzar{O(n)})$
(whose description we know from Proposition \ref{BOcone})
under the restriction functor:
$$
\dmDVA{\bzar{O(n)}}\row\dmDVA{\hiq{X}}.
$$
This functor
respects tensor products. It remains to apply the forgetful functor
$$
\dmDVA{\hiq{X}}\row\dmkDVA
$$
which sends our motive to $M(X)$ and
also respects tensor products since the diagonal map
$M(\hiq{X})\row M(\hiq{X})\otimes M(\hiq{X})$ is an isomorphism
(see \cite[Lemma 6.8, Example 6.3]{VoMSS}).
\end{proof}

If $n=\ddim(q)$ is even, we have an action of $\gm(\Field)$ on $\hm^{*,*'}(\bzar{O(n)},\zz/2)$.
Let $q'=\lambda\cdot q$ be two proportional forms of the same (even) dimension $n$.
Then we have the canonical identification $\hiq{X_{q}}\cong\hiq{X_{q'}}$.

%%%%%%%%%%%%%%%%%%%%%%%%%%%%%%%%%%%%%%%%%%%%%%%%%%%%%%%%%%%%%%%%%%%%%%%%%%%%%%%%%%%%
\begin{prop}
\label{lambdadejstvie}
There is a commutative diagram:
$$
\xymatrix @-0.7pc{
\hm^{*,*'}(\bzar{O(n)},\zz/2) \ar @{->}[r]^(0.5){\alpha_{X_q}^*}  \ar @{->}[d]_(0.5){\ffi_{\lambda}}
& \hm^{*,*'}(\hiq{X_{q}},\zz/2)  \ar @{=}[d] \\
\hm^{*,*'}(\bzar{O(n)},\zz/2)  \ar @{->}[r]_(0.5){\alpha_{X_{q'}}^*}
& \hm^{*,*'}(\hiq{X_{q'}},\zz/2),
}
$$
where $\ffi_{\lambda}$ is an automorphism over $H$ s. t.
$\ffi_{\lambda}(u_{2i+1})=u_{2i+1}$, $\ffi_{\lambda}(u_{2i})=u_{2i}+\{\lambda\}\cdot u_{2i-1}$.
\end{prop}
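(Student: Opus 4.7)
The identification $\hiq{X_q} \cong \hiq{X_{q'}}$ in $\hs(\Field)$ follows from \ref{HsIsoOfEYToEXSSS}: for even $n$ with $-1$ a square, $\lambda q$ is hyperbolic over $L/\Field$ if and only if $q$ is, since scaling preserves hyperbolic planes.

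The key geometric observation is that $X_{q'}$ realizes as the $\mu_2$-twist of $X_q$. Using the central embedding $\mu_2 \hookrightarrow O(n)$, $\pm 1 \mapsto \pm I$, one has a natural isomorphism of $O(n)$-torsors
$$
X_{\lambda q} \,\cong\, (X_{\la\lambda\ra} \times X_q)/\mu_2
$$
(antidiagonal action), reflecting the identification $\la\lambda\ra \otimes (V,q) = (V,\lambda q)$ of associated vector bundles with form. Composing with the scalar multiplication $\mu\colon \mu_2 \times O(n) \to O(n)$ yields a commutative $\hs$-diagram
$$
\xymatrix @-0.5pc{
\hiq{X_{\la\lambda\ra}} \times \hiq{X_q} \ar[rr]^{\alpha_{X_{\la\lambda\ra}} \times \alpha_{X_q}} \ar[d]_{\pi_2} & & \bzar{\mu_2} \times \bzar{O(n)} \ar[d]^{\bzar{\mu}} \\
\hiq{X_{q'}} \ar[rr]_{\alpha_{X_{q'}}} & & \bzar{O(n)},
}
$$
where $\pi_2$ is the unique (by \ref{EXIsSubPointinsHSSS}) $\hs$-projection onto the subobject $\hiq{X_{q'}}$ of the final object.

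The heart of the proof is the computation of $(\bzar{\mu})^*(u_r)$. Further pulling back via the splitting-principle embedding $\delta\colon (O(1))^n \hookrightarrow O(n)$ of Proposition \ref{i111n}, the composed map $\mu \circ (\mathrm{id}\times\delta)$ is induced by the group homomorphism $(\mu_2)^{n+1} \to (\mu_2)^n$, $(\eps, \eps_1, \dots, \eps_n) \mapsto (\eps\eps_1, \dots, \eps\eps_n)$, which on cohomology acts by $x_i \mapsto y + x_i$ (with $y$ the generator of the $\bzar{\mu_2}$-factor). Combining the mod-$2$ identity $\sigma_r(y+x_1, \dots, y+x_n) = \sum_s \binom{n-s}{r-s} y^{r-s} \sigma_s$ with $\delta^*(u_r) = \tau^{[r/2]}\sigma_r$ and the injectivity of $\delta^*$ yields
$$
(\bzar{\mu})^*(u_r) = \sum_{s=0}^r \binom{n-s}{r-s}\, y^{r-s}\, \tau^{[r/2]-[s/2]}\, u_s.
$$

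Pulling this back via $\alpha_{X_{\la\lambda\ra}} \times \alpha_{X_q}$ and writing $\zeta := \alpha_{X_{\la\lambda\ra}}^*(u_1)$, the decisive identities are $\tau\zeta = \{\lambda\}$ (from $\eps^* w_1 = \tau u_1$), $\{\lambda\}^2 = 0$ (since $-1$ is a square), and $\zeta\{\lambda\} = \tau\zeta^2 = \alpha_{X_{\la\lambda\ra}}^*(\eps^*c_1) = \{\lambda\}^2 = 0$. These jointly annihilate every summand with $r - s \geq 2$; the single surviving correction $s = r-1$ carries coefficient $\binom{n-r+1}{1} \equiv n-r+1 \pmod 2$, which for $n$ even equals $1$ when $r$ is even and $0$ when $r$ is odd. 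This gives
$$
\pi_2^*\alpha_{X_{q'}}^*(u_r) = \pi_2^*\bigl(\alpha_{X_q}^*(u_r) + \{\lambda\}\alpha_{X_q}^*(u_{r-1})\bigr)\quad (r \text{ even}),
$$
and $\pi_2^*\alpha_{X_{q'}}^*(u_r) = \pi_2^*\alpha_{X_q}^*(u_r)$ for $r$ odd, all in $\hm^{*,*'}(\hiq{X_{\la\lambda\ra}}\times\hiq{X_q}, \zz/2)$. Since $H$ embeds as a direct summand of $\hm^{*,*'}(\hiq{X_{\la\lambda\ra}}, \zz/2)$ via the unit, the Kunneth formula makes $\pi_2^*$ split injective, so the identities descend to $\hm^{*,*'}(\hiq{X_{q'}}, \zz/2)$. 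This is precisely the commutativity of the diagram with $\ffi_\lambda$ as specified. The main obstacle is the combinatorial collapse of the summation, driven crucially by the vanishing $\zeta\{\lambda\} = 0$, which in turn relies on $\sqrt{-1}\in\Field$.
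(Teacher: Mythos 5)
Your geometric setup is essentially sound and runs parallel to the paper's computation: the central $\mu_2\subset O(n)$, the identification $X_{\lambda q}\cong(X_{\la\lambda\ra}\times X_q)/\mu_2$, the splitting-principle formula for $(\bzar{\mu})^*(u_r)$, and the collapse of the sum via $\tau\zeta^2=\zeta\{\lambda\}=0$ all reproduce the combinatorics that the paper organizes via torsor triples and Proposition \ref{GYH}. The genuine gap is the final descent. You assert that $\pi_2^*:\hm^{*,*'}(\hiq{X_{q'}},\zz/2)\row\hm^{*,*'}(\hiq{X_{\la\lambda\ra}}\times\hiq{X_q},\zz/2)$ is split injective ``by the Kunneth formula''. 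There is no Kunneth formula for the motivic cohomology of a product of Chech simplicial schemes --- computing $\hm^{*,*'}(\hiq{X}\times\hiq{Y},\zz/2)$ is in general a hard problem (compare the effort spent on Lemma \ref{XhiqtildeQ} and Proposition \ref{cohIn}). Concretely, $M(\hiq{X_{\la\lambda\ra}}\times\hiq{X_q})=M(\hiq{\lva\lambda\rva})\otimes M(\hiq{X_q})$ and $M(\pi_2)=p\otimes\mathrm{id}$, where $p:M(\hiq{\lva\lambda\rva})\row\zz/2$ is the projection; to split $\pi_2^*$ this way one would need a section of $p$, which does not exist for non-square $\lambda$ (the nontrivial class $\gamma\in\hm^{1,0}(\hiq{\lva\lambda\rva},\zz/2)$ of Lemma \ref{hiqa1} obstructs it). So the identity you establish after applying $\pi_2^*$ does not automatically descend to $\hm^{*,*'}(\hiq{X_{q'}},\zz/2)$.

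The paper circumvents this by never specializing away the classifying-space factor: since $q_n$ and $q'_n=\lambda\cdot q_n$ are both split, the torsor $\Iso(q'_n\row q_n)$ is trivial, so Proposition \ref{GYH} yields an honest $\hs$-identification $\theta^{Zar}_X:\bzar{O(q'_n)}=\bzar{O(q_n)}$; the formula for $(\theta^{Zar}_X)^*$ (your $\ffi_\lambda$) is then read off inside $\hm^{*,*'}(\hiq{\lva\lambda\rva}\times\bzar{O(q'_n)},\zz/2)$, where Proposition \ref{Yu} both computes the ring completely and guarantees that $\hm^{*,*'}(\bzar{O(q'_n)},\zz/2)$ injects into it. Once $\ffi_\lambda$ is realized as an automorphism of $\hm^{*,*'}(\bzar{O(n)},\zz/2)$ itself, commutativity of the square for arbitrary $q$ is formal. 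To repair your argument you must replace the $\pi_2$-descent by this mechanism (or supply an independent proof that $\pi_2^*$ is injective in the relevant bidegrees, which is not available). A minor separate point: the chain $\tau\zeta^2=\alpha_{X_{\la\lambda\ra}}^*(\eps^*c_1)=\{\lambda\}^2$ is off in bidegree ($\tau\zeta^2\in\hm^{2,1}$ while $\{\lambda\}^2\in\hm^{2,2}$); the correct reason for the vanishing is that $\alpha_{X_{\la\lambda\ra}}^*(\eps^*c_1)$ is pulled back from $\hm^{2,1}(\Spec\Field,\zz/2)=0$, or equivalently that $\gamma\cdot\{\lambda\}=0$ by Lemma \ref{hiqa1}.
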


\begin{proof}
We start with the $1$-dimensional case. Consider quadratic forms
$\la 1\ra$ and $\la\lambda\ra$.
We have the following generalization of Theorem \ref{u}, which can be proven in exactly the same way.

%%%%%%%%%%%%%%%%%%%%%%%%%%%%%%%%%%%%%%%%%%%%%%%%%%%%%%%%%%%%%%%%%%%%%%%%%%%%%%%%%
\begin{prop}
\label{Yu}
Let ${\cal Y}$ be smooth simplicial scheme. Then
$$
\hm^{*,*'}({\cal Y}\times\bzar{O(n)},\zz/2)=\hm^{*,*'}({\cal Y},\zz/2)[u_1,\ldots,u_n],
$$
where $u_i$ are subtle Stiefel--Whitney classes.
\end{prop}

We have a torsor triple $(O(\la 1\ra),Y,O(\la\lambda\ra))$, where $Y=Iso(\la\lambda\ra\row\la1\ra)$.
$\hiq{Y}=\hiq{\lva\lambda\rva}$.
By Proposition \ref{Yu},
$\hm^{*,*'}(\hiq{Y}\times\bzar{O(\la 1\ra)},\zz/2)=\hm^{*,*'}(\hiq{\lva\lambda\rva},\zz/2)[u_1]$.
Our groups can be identified: $O(\la\lambda\ra)=O(\la 1\ra)=\zz/2$, and by Proposition \ref{GYH}
we have an identification:
$$
\hiq{Y}\times\bzar{O(\la\lambda\ra)}\stackrel{\theta^{Zar}_Y}{\cong}\bzar{O(\la 1\ra)}\times\hiq{Y}.
$$

%%%%%%%%%%%%%%%%%%%%%%%%%%%%%%%%%%%%%%%%%%%%%%%%%%%%%%%%%%%%%%%%%%%%%%%%%%%%%%%%%%%%%%%%%%%%%%%
\begin{lem}
\label{hiqa1}
$\hm^{*,*'}(\hiq{\lva\lambda\rva},\zz/2)=H[\gamma]/(\tau\cdot\gamma=\{\lambda\};\,
\gamma\cdot Ann_{\{\lambda\}}=0)$,
where $\gamma\in\hm^{1,0}(\hiq{\lva\lambda\rva},\zz/2)$ and $Ann_{\{\lambda\}}$ is the annulator of
$\{\lambda\}$ in $K^M_*(\Field)/2$.
\end{lem}

\begin{proof}
In $\dmDVA{\Field}$ we have an exact triangle (cf. \cite[Theorem 4.4]{VoMil}):
$$
\xymatrix @-2.1pc{
& M(\op{Spec}(\Field\sqrt{\lambda})) \ar @{->}[rddd] &  \\
& & \\
& \star & \\
M(\hiq{\lva\lambda\rva}) \ar @{->}[ruuu] &  & M(\hiq{\lva\lambda\rva})
\ar @{->}[ll]_(0.5){\gamma}^(0.5){[1]}.
}
$$
The map $\gamma$ here is given by the same-named element
$\gamma\in\hm^{1,0}(\hiq{\lva\lambda\rva},\zz/2)$, and since $\op{Spec}(\Field\sqrt{\lambda})$
is a zero-dimensional pure motive, multiplication by $\gamma$ is an isomorphism on all
diagonals starting from the $1$-st one, and a surjection on the $0$-th diagonal.
On the other hand, it follows from \cite{VoMil} that multiplication by $\tau$
identifies the $1$-st diagonal with the $\kker(K^M_*(\Field)/2\row K^M_*(\Field\sqrt{\lambda})/2)$,
which is a principal ideal in $K^M_*(\Field)/2$ generated by $\{\lambda\}$.
Thus, for each $i\geq 1$, the $i$-th diagonal is a cyclic module over $K^M_*(\Field)/2$
generated by $\gamma^i$ and isomorphic to $\{\lambda\}\cdot K^*(\Field)/2$.
Clearly, $\tau\cdot\gamma=\{\lambda\}$. The rest of the description follows.
\phantom{a}\hspace{5mm}
\end{proof}

Let $\hiq{Y}\stackrel{\ax{Y}}{\row}\bzar{O(\la 1\ra)}$ be the fiber over $[Y]$.

%%%%%%%%%%%%%%%%%%%%%%%%%%%%%%%%%%%%%%%%%%%%%%%%%%%%%%%%%%%%%%%%%%%%%%%%%%%%%%%%%%%%%%%%%%%%%%%%%%%

\begin{lem}
\label{gammau1}
We have: $\alpha_Y^*(u_1)=\gamma$. In particular, the map
$$
\ax{Y}^*:\hm^{*,*'}(\bzar{O(\la 1\ra)},\zz/2)\row\hm^{*,*'}(\hiq{\lva\lambda\rva},\zz/2)
$$
is surjective.
\end{lem}

\begin{proof}
We know that $\ax{Y}^*(\omega_1)=\{\lambda\}$. Hence, $\ax{Y}^*(u_1)=\gamma$.
\end{proof}

It follows from Proposition \ref{GYH} that
$(\theta^{Zar}_Y)^*(u_1)=u_1+\gamma$.

Let now $q_n=\perp_{i=1}^n\la 1\ra$, and $q'_n=\lambda\cdot q_n$.
Consider the torsor triple $(O(q_n),X,O(q'_n))$, where $X=Iso(q'_n\row q_n)$.
Since $q_n$ is even-dimensional split, the torsor $X$ is trivial. Hence,
by Proposition \ref{GYH}, we have an identification
$
\xymatrix @-0.7pc{
\bzar{O(q'_n)} \ar @{->}[r]^(0.5){\theta^{Zar}_X}_(0.5){=} & \bzar{O(q_n)}.
}
$
It can be extended to a commutative diagram:
$$
\xymatrix @-0.7pc{
\hiq{\lva\lambda\rva}\times\bzar{O(q_n')} \ar @{->}[r]^(0.5){\theta} &
\bzar{O(q_n)}\times\hiq{\lva\lambda\rva} \\
\hiq{\lva\lambda\rva}\times\times_{j=1}^n\bzar{O(\la\lambda\ra)}
\ar @{->}[r]_(0.5){\wt{\theta}} \ar @{->}[u] &
\times_{j=1}^n\bzar{O(\la 1\ra)}\times\hiq{\lva\lambda\rva} \ar @{->}[u],
}
$$
Computing the induced maps on $u_r$ we get:
$$
\xymatrix @-0.7pc{
\sum_{j=0}^r\binom{n-j}{r-j}u_j\cdot\gamma^{r-j}\cdot\tau^{[r/2]-[j/2]} \ar @{|->}[d] &
u_r \ar @{|->}[d] \\
\sigma_r(x_1+\gamma,\ldots,x_n+\gamma)\cdot\tau^{[r/2]} &
\sigma_r(x_1,\ldots,x_n)\cdot\tau^{[r/2]} \ar @{|->}[l].
}
$$
Therefore,
$$
\theta^*(u_r)=\sum_{j=0}^r\binom{n-j}{r-j}u_j\cdot\gamma^{r-j}\cdot\tau^{[r/2]-[j/2]}.
$$
Notice that $\gamma^2\cdot\tau=\gamma\cdot\{-1\}$, and since $-1$ is a square in $\Field$,
we obtain: $\theta^*(u_{2m+1})=u_{2m+1}$ and
$\theta^*(u_{2m})=u_{2m}+u_{2m-1}\cdot\{\lambda\}$.
Since the map:
$$
\hm^{*,*'}(\bzar{O(q'_n)},\zz/2)\row
\hm^{*,*'}(\hiq{\lva\lambda\rva}\times\bzar{O(q'_n)},\zz/2)
$$
is injective
by Proposition \ref{Yu}, the same formulas work for the map $(\theta^{Zar}_X)^*$.
If now $q$ is an arbitrary $n$-dimensional form, and $q'=\lambda\cdot q$, then
the torsor $Z'=Iso(q'\row q'_n)$ can be canonically identified with the torsor
$Z=Iso(q\row q_n)$ so that we have a commutative diagram:
$$
\xymatrix @-0.7pc{
\hiq{Z'} \ar @{=}[r] \ar @{->}[d]_(0.5){\ax{Z'}^*}& \hiq{Z} \ar @{->}[d]^(0.5){\ax{Z}^*}\\
\bzar{O(q_n')} \ar @{=}[r] &
\bzar{O(q_n)},
}
$$
where we use the standard identification $O(q'_n)=O(q_n)$ (not $\theta^{Zar}_X$).
It remains to observe that the composition $\hiq{X_{q}}\stackrel{\ax{Z'}^*}{\row}
\bzar{O(q'_n)}\stackrel{\theta^{Zar}_X}{\lrow}\bzar{O(q_n)}$ is equal to the composition:
$
\xymatrix @-0.7pc{
\hiq{X_{q}} \ar @{=}[r] & \hiq{X_{q'}} \ar @{->}[r]^(0.4){\ax{X_{q'}}^*} & \bzar{O(q_n)}.
}
$
Proposition \ref{lambdadejstvie} is proven.
\end{proof}

We can use the {\it subtle Stiefel--Whitney classes} to reconstruct the motive
of the highest quadratic Grassmanian corresponding to $X=X_q$.
Let $d=[n/2]$ and $P_d=P_d^n$ be the stabilizator in $O(n)$ of the totally-isotropic
subspace of maximal dimension (equal to $d$). Then $G_d(X)=P_d\backslash X$ is the
variety of projective subspaces of maximal dimension on the projective quadric $Q$
(defined by the form $q$).
We have a natural embedding $GL_d\subset P_d$, for even $n$, and
$GL_d\times\zz/2\subset P_d$, for odd $n$, coming from the presentation of
$V_{{q_n}}$ as $V\oplus V^*$ and $V\oplus V^*\oplus V_{\la 1\ra}$, respectively.

The cohomology of $\bzar{(GL_d)}$ can be computed in the same way as for $\bzar{O(n)}$.

%%%%%%%%%%%%%%%%%%%%%%%%%%%%%%%%%%%%%%%%%%%%%%%%%%%%%%%%%%%%%%%%%%%%%%%%%%%%%%%%%%%%%%%%%%%%%%%%%%%

\begin{prop}
\label{GLd}
$$
\hm^{*,*'}(\bzar{(GL_d)},\zz)=\hm^{*,*'}(\op{Spec}(\Field),\zz)[c_1,c_2,\ldots,c_d],
$$
where $c_i\in\hm^{2i,i}(\bzar{(GL_d)},\zz)$ coincides with the pull-back of the
$i$-th Chern class from $\bet{(GL_d)}$. Also, in $\dm{\bzar{(GL_d)}}$,
$$
M(\ezar{(GL_d)}\row\bzar{(GL_d)})=\otimes_{j=1}^d
\op{Cone}[-1]\left(\tate{}\stackrel{c_j}{\lrow}\tate{}(j)[2j]\right),
$$
where $\tate{}=\tate{\bzar{(GL_d)}}$ is the Tate-motive.
\end{prop}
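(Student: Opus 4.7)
The plan is to mirror, step by step, the proofs of Theorem \ref{u} and Proposition \ref{BOcone}, with the orthogonal groups $O(n)$ replaced throughout by the general linear groups $GL_d$. We induct on $d$; the base case $d=0$ is trivial since $\bzar{GL_0}=S$.

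For the inductive step, realise $GL_{d-1}\subset GL_d$ as the stabilizer of the standard basis vector $e_1$ (say). The homogeneous space $GL_{d-1}\backslash GL_d$ is canonically isomorphic to $\aaa^d\setminus\{0\}$, whose motive in $\dm{\Field}$ is $\tate{}\oplus\tate{}(d)[2d-1]$ by the Gysin triangle for the closed embedding $\{0\}\hookrightarrow\aaa^d$. Form the auxiliary spaces $\btzar{GL_{d-1}}$ and $\bttzar{GL_{d-1}}$ exactly as in \ref{ProofUTheoremSSS}, and consider the fibration $\bttzar{GL_{d-1}}\row\bzar{GL_d}$, trivial over the simplicial components, with fibre $\aaa^d\setminus\{0\}$.

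The integer-coefficient version of Proposition \ref{gradTate} applies in this situation: the hypothesis $H^1(CC(\bzar{GL_d}),\zz)=0$ is automatic because every $GL_d^n$ is connected, so $CC(\bzar{GL_d})$ is a point in each simplicial degree and is therefore contractible (which also makes the obstruction group $\Hom_{gr}(\pi_1(CC(\bzar{GL_d})),\zz^{\times})$ vanish). Consequently $M(\bttzar{GL_{d-1}}\row\bzar{GL_d})$ is globally an extension of $\tate{}(d)[2d-1]$ by $\tate{}$, giving an exact triangle in $\dmR{\bzar{GL_d}}{\zz}$ whose boundary arrow, after passage to cohomology and use of the $\hs$-equivalence $\btzar{GL_{d-1}}\stackrel{\ffi}{\row}\bzar{GL_{d-1}}$, is multiplication by a new class $c_d\in\hm^{2d,d}(\bzar{GL_d},\zz)$. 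The same degree/injectivity argument as in the proof of Theorem \ref{u} then yields $\hm^{*,*'}(\bzar{GL_d},\zz)=H[c_1,\ldots,c_d]$.

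To match $c_d$ with the pullback of the étale Chern class, restrict along the diagonal maximal torus $\delta:\times_{j=1}^d GL_1\hookrightarrow GL_d$. By multiplicativity of the étale Chern class and the same inductive computation applied on the torus side, both our $c_d$ and the pullback of the étale Chern class restrict to the $d$-th elementary symmetric polynomial in the $c_1$'s of the factors; since these symmetric polynomials are algebraically independent over $H$, the map $\delta^{*}$ is injective (an exact analogue of Proposition \ref{i111n} for $GL$), and the two classes therefore coincide. Finally, iterating the exact triangle obtained in the induction, exactly as in Proposition \ref{BOcone}, produces the tensor-product description of $M(\ezar{GL_d}\row\bzar{GL_d})$. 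The main delicate point is the application of Proposition \ref{gradTate} with integer rather than $\zz/2$-coefficients; this is handled by the contractibility of $CC(\bzar{GL_d})$ noted above.
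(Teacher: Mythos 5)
Your proof follows essentially the same route as the paper's: induction on $d$ via the tower $\{e\}\subset GL_1\subset\cdots\subset GL_d$, the motive of $GL_{d-1}\backslash GL_d$, and Proposition~\ref{gradTate}, with the Chern-class identification handled by the torus restriction. Your explicit check that Proposition~\ref{gradTate} applies with $\zz$-coefficients — via the contractibility of $CC(\bzar{(GL_d)})$, since each $GL_d^{\times n}$ is connected, so $\pi_1$ and the obstruction group $\Hom_{gr}(\pi_1,\zz^{\times})$ both vanish — is a careful and worthwhile addition; the paper leaves this point tacit.

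Two small imprecisions, neither fatal. First, $GL_{d-1}$ embedded as a diagonal block is \emph{not} the stabiliser of $e_1$ in $GL_d$; that stabiliser is a strictly larger affine subgroup (the block $GL_{d-1}$ together with a unipotent part). Second, $GL_{d-1}\backslash GL_d$ is not isomorphic as a variety to $\aaa^d\setminus 0$ — its dimension is $2d-1$, not $d$. It is, however, an affine-space bundle over $\aaa^d\setminus 0$, so the two have the same motive in $\dm{\Field}$, which is all the argument actually requires. (The paper's $\cong$ in the same spot suffers from the same looseness.)
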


\begin{proof}
From the tower of subgroups
$$
\{e\}\row GL_1\row GL_2\row\ldots\row GL_{d-1}\row GL_d,
$$
we get a tower of fibrations:
$$
\ezar{(GL_d)}\stackrel{g_1}{\lrow}\bzart{(GL_1)}\stackrel{g_2}{\lrow}\bzart{(GL_2)}\stackrel{}{\lrow}
\ldots\stackrel{}{\lrow}\bzart{(GL_{d-1})}\stackrel{g_d}{\lrow}\bzar{(GL_d)},
$$
(where $\bzart{(GL_i)}$ is $\hs$-isomorphic to $\bzar{(GL_i)}$)
which is trivial over simplicial components.
Since $GL_{d-1}\backslash GL_d\cong\aaa^d\backslash 0$, whose motive in $\dm{\Field}$ is $\zz\oplus\zz(d)[2d-1]$,
we get that in $\dm{\bzar{(GL_d)}}$ there is a distinguished triangle
$$
\xymatrix @-1.7pc{
& M(\bzart{(GL_{d-1})}) \ar @{->}[rddd]^(0.5){g_d} &  \\
& & \\
& \star & \\
M(\bzar{(GL_d)})(d)[2d] \ar @{->}[ruuu]^(0.5){[1]} &  & M(\bzar{(GL_d)})
\ar @{->}[ll].
}
$$
And again, by induction on $d$ and degree considerations, we get that $g_d^*$ is surjective on
$\hm^{*,*'}(-,\zz)$ with the kernel generated by $c_d\in\hm^{2i,i}(\bzar{(GL_d)},\zz)$, and
$c_1,\ldots,c_{d-1}$ are lifted uniquely to $\hm^{*.*'}(\bzar{(GL_d)},\zz)$.
Simultaneously, we get that in $\dm{\bzar{(GL_d)}}$,
$$
M(\bzart{(GL_{d-1})}\row \bzar{(GL_{d})})=
\op{Cone}[-1]\left(\tate{}\stackrel{c_d}{\lrow}\tate{}(d)[2d]\right).
$$
Recalling that $c_i$ comes from $\bzar{(GL_d)}$, we get the description of
$M(\ezar{(GL_d)}\row\bzar{(GL_d)})$.
\end{proof}

\begin{rem}
\label{GLdetzar}
In particular, the map
$\eps^*:\hm^{*.*'}(\bet{(GL_d)},\zz)\row\hm^{*.*'}(\bzar{(GL_d)},\zz)$
is an isomorphism. But, actually, the very map
$\eps:\bzar{(GL_d)}\row\bet{(GL_d)}$ is an isomorphism in $\hsF{\Field}$
by {\rm \cite[Lemma 4.1.18]{MV}}.
\end{rem}

%%%%%%%%%%%%%%%%%%%%%%%%%%%%%%%%%%%%%%%%%%%%%%%%%%%%%%%%%%%%%%%%%%%%%%%%%%%%%%%%%%%%%%%%%%

\begin{prop}
\label{PdGLd}
In $\hk$, we have an identification: $\bzar{(P^n_d)}=\bzar{(GL_d)}$, for even $n$, and
$\bzar{(P^n_d)}=\bzar{(GL_d\times\zz/2)}$, for odd $n$.
In particular,
\begin{equation*}
\hm^{*,*'}(\bzar{(P^n_d)},\zz/2)=
\begin{cases}
H[c_1,\ldots,c_d],\,\,\,
\text{if}\,\,\,n\,\,\,\text{is even};\\
H[u_1,c_1,\ldots,c_d],
\,\,\,\text{if}\,\,\,n\,\,\,\text{is odd}.
\end{cases}
\end{equation*}
\end{prop}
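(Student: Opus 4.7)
The plan is to exhibit $P_d^n$ as a semidirect product $L \ltimes U$ with $U$ a unipotent radical isomorphic to affine space, and then reduce to Proposition \ref{GLd} and Theorem \ref{u} via the $\aaa^1$-contractibility of $U$. Writing $V_{q_n} = W \oplus W^*$ for $n=2d$, and $V_{q_n} = W \oplus W^* \oplus V_{\la 1\ra}$ for $n=2d+1$, with $W \subset V_{q_n}$ totally isotropic of maximal dimension $d$, a direct block-matrix computation identifies
\begin{equation*}
L = \begin{cases} GL(W) = GL_d, & n \text{ even,} \\ GL(W) \times O(V_{\la 1\ra}) = GL_d \times \zz/2, & n \text{ odd,} \end{cases}
\end{equation*}
as a Levi subgroup of $P_d$, and realizes the unipotent radical $U$ as a vector group (namely $\Lambda^2 W$ for even $n$ and $\Lambda^2 W \oplus W$ for odd $n$). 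In particular, $U \cong \aaa^N$ as a scheme.

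Next I would use the short exact sequence $1 \row U \row P_d \row L \row 1$ to produce a fiber sequence $\bzar{U} \row \bzar{(P_d)} \row \bzar{L}$ in $\hk$. Since $U \cong \aaa^N$ is $\aaa^1$-contractible, and $EU$ is always simplicially contractible, the fiber sequence $U \row EU \row \bzar{U}$ forces $\bzar{U}$ to be $\aaa^1$-contractible. Therefore the projection $\bzar{(P_d)} \row \bzar{L}$ is an $\hk$-isomorphism. (Concretely, the Levi section $L \hookrightarrow P_d$ gives an inverse map $\bzar{L} \row \bzar{(P_d)}$; the composition $\bzar{L} \row \bzar{(P_d)} \row \bzar{L}$ is the identity on the nose, and the other composition is $\aaa^1$-homotopic to the identity via the $\aaa^1$-contractibility of $U$.) This yields the claimed identification $\bzar{(P^n_d)} = \bzar{L}$ in $\hk$.

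The cohomology description then follows. For even $n$, Proposition \ref{GLd} with $\zz/2$-coefficients gives
\begin{equation*}
\hm^{*,*'}(\bzar{(P^n_d)}, \zz/2) = \hm^{*,*'}(\bzar{(GL_d)}, \zz/2) = H[c_1, \ldots, c_d].
\end{equation*}
For odd $n$, since $\bzar{(GL_d \times \zz/2)} = \bzar{(GL_d)} \times \bzar{(\zz/2)}$ and $\zz/2 = O(1)$, Theorem \ref{u} gives $\hm^{*,*'}(\bzar{(\zz/2)}, \zz/2) = H[u_1]$; the Künneth-type Proposition \ref{Yu} (applied with $\cy = \bzar{(GL_d)}$) then combines these into $\hm^{*,*'}(\bzar{(P^n_d)}, \zz/2) = H[u_1, c_1, \ldots, c_d]$.

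The main obstacle is rigorously producing the fiber sequence $\bzar{U} \row \bzar{(P_d)} \row \bzar{L}$ in $\hk$ from the short exact sequence of algebraic groups; once this is in hand, the $\aaa^1$-contractibility of $\bzar{U}$ (coming from $U \cong \aaa^N$) closes the $\hk$-equivalence, and the cohomology computation becomes essentially automatic. Should the fiber-sequence argument be awkward in the Nisnevich $\aaa^1$-setting, a workaround is to use the geometric models $\bgm{G}$ of \ref{ArrowFromBGToGeomBGSSS}: choosing a faithful $L$-representation $V$ and extending it trivially on $U$ produces a map $\bgm{(P_d)} \row \bgm{L}$ which is a Zariski-locally trivial $U$-bundle, hence an $\hk$-weak equivalence.
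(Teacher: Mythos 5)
Your proposal is correct and follows essentially the same route as the paper: decompose $P^n_d$ as a Levi subgroup ($GL_d$, resp. $GL_d\times\zz/2$) times a unipotent radical isomorphic to an affine space, conclude the identification of classifying spaces in $\hk$ from $\aaa^1$-contractibility of that radical (the paper does this termwise on the simplicial level), and then quote Proposition \ref{GLd} together with the $O(1)=\zz/2$ factor. Two cosmetic caveats: for odd $n$ the radical is a Heisenberg-type (non-abelian) unipotent group rather than a vector group, though it is still an affine space as a scheme, which is all that is used; and your fallback via $\bgm{}$ would compute $\bet{}$ rather than $\bzar{}$, so the termwise/section argument should be regarded as the actual proof.
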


\begin{proof}
For $n$ even, we have a decomposition: $P^n_d=GL_d\cdot U$, where
$U$ consists of transformations: $(v,v^*)\mapsto (v+f(v^*),v^*)$, where
$f:V^*\row V$ is a linear map with the property: $<f(v^*),v^*>=0$.

For $n$ odd, we have a decomposition:
$P^n_d=(GL_d\times\zz/2)\cdot U'$, where $U'$ consists of transformations:
$(v,v^*,\alpha)\mapsto (v+f(v^*)+\alpha\cdot w,v^*,\alpha-\frac{1}{2}<w,v^*>)$,
where $w\in V$ is an arbitrary vector, and
$f:V^*\row V$ is a linear map with the property:\\
$<f(v^*),v^*>=-\frac{1}{4}(<w,v^*>)^2$.

Since $U$ and $U'$ are isomorphic to affine spaces, we have an identification in $\hk$:
\begin{equation*}
\bzar{(P^n_d)}=
\begin{cases}
\bzar{(GL_d)},\,\,\,
\text{if}\,\,\,n\,\,\,\text{is even};\\
\bzar{(GL_d\times\zz/2)},\,\,\,
\text{if}\,\,\,n\,\,\,\text{is odd}.\\
\end{cases}
\end{equation*}
\end{proof}

We have the following natural diagram in $\Spc$:
$$
\bzar{P^n_d}\stackrel{\ffi}{\llow}\btzar{P^n_d}\stackrel{\psi}{\lrow}
\bttzar{P^n_d},
$$
where $\btzar{P^n_d}=P^n_d\backslash(\ezar{P^n_d}\times\ezar{O(n)})$, and
$\bttzar{P^n_d}=P^n_d\backslash\ezar{O(n)}$.
Since the projection $O(n)\row P^n_d\backslash O(n)$ is split over each point of the base,
the maps $\ffi$ and $\psi$ are isomorphisms in $\hsF{\Field}$. In particular,
$M(\btzar{P^n_d}\stackrel{\psi}{\row}\bttzar{P^n_d})=T\in\dmDVA{\bttzar{P^n_d}})$.

%%%%%%%%%%%%%%%%%%%%%%%%%%%%%%%%%%%%%%%%%%%%%%%%%%%%%%%%%%%%%%%%%%%%%%%%%%%%%%%%%%%%%%%%%%%%%%%
\begin{prop}
\label{Pdu}
Under the natural projection $\bttzar{(P^n_d)}\stackrel{f}{\row}\bzar{O(n)}$, we have:
$f^*(u_{2i})=c_i$, and $f^*(u_{2i+1})=0$, for even $n$, and $f^*(u_{2i+1})=c_i\cdot u_1$, for odd $n$.
In particular,
\begin{equation*}
\hm^{*,*'}(\bzar{(P^n_d)},\zz/2)=
\begin{cases}
\hm^{*,*'}(\bzar{O(n)},\zz/2)/(u_{2i+1},\,\,0\leq i<n/2),\,\,\,
\text{if}\,\,\,n\,\,\,\text{is even};\\
\hm^{*,*'}(\bzar{O(n)},\zz/2)/(u_{2i+1}-u_{2i}\cdot u_1,\,\,0<i<n/2),
\,\,\,\text{if}\,\,\,n\,\,\,\text{is odd}.
\end{cases}
\end{equation*}
\end{prop}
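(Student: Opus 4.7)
The plan is to deduce everything from the even case, which itself follows from a bidegree analysis combined with the Whitney sum formula on the \'etale side. First, I set up the geometry: by Proposition \ref{PdGLd}, $\bzar{(P^n_d)}$ is $\hk$-isomorphic to $\bzar{(GL_d)}$ when $n$ is even and to $\bzar{(GL_d\times\zz/2)}$ when $n$ is odd. Combined with the $\hs$-isomorphisms $\ffi,\psi$ appearing right before the statement, one sees that $\bttzar{(P^n_d)}\stackrel{f}{\row}\bzar{O(n)}$ becomes the classifying pullback of the inclusion of groups $GL_d\subset O(n)$ (respectively $GL_d\times\zz/2\subset O(n)$) induced by the splittings $V_{q_n}=V\oplus V^*$ (respectively $V\oplus V^*\oplus V_{\la 1\ra}$).

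For the even case $n=2d$, I perform a bidegree count in $\hm^{*,*'}(\bzar{(GL_d)},\zz/2)=H[c_1,\ldots,c_d]$ (with $c_k\in\hm^{2k,k}$, and $H$-elements in bidegree $(d,w)$ requiring $w\geq d$). It shows $\hm^{2j+1,j}=0$, whence $f^*(u_{2j+1})=0$ automatically, and $\hm^{2j,j}$ coincides with the pure $\zz/2$-span of Chern monomials $c^{\alpha}$ with $\sum k\alpha_k=j$. To pin down $f^*(u_{2j})$ inside this span, I invoke Remark \ref{GLdetzar}: the comparison $\eps:\bzar{(GL_d)}\row\bet{(GL_d)}$ is an $\hs$-isomorphism, so the pullback of $c_{2j}=\eps^*(c_{2j}^{et})$ can be computed on the \'etale side, where the restriction of the standard $O(n)$-representation is $V\oplus V^*$ and $c(V\oplus V^*)=c(V)^2$ in $\zz/2$-coefficients, yielding $f^*(c_{2j})=(c_j)^2$. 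Combined with $c_{2j}=u_{2j}^2$ from Theorem \ref{u} (since $\delta(2j)=0$), this gives $(f^*(u_{2j}))^2=(c_j)^2$, and Frobenius is injective on the $\zz/2$-span of distinct Chern monomials (their squares are again distinct basis elements), hence $f^*(u_{2j})=c_j$.

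For odd $n=2d+1$, I factor the inclusion $GL_d\times\zz/2\subset O(2d+1)$ through the block-diagonal subgroup $O(2d)\times O(1)\subset O(2d+1)$. Proposition \ref{Olmn} with $(m,l)=(2d,1)$, together with the vanishing of $u_r$ on $\bzar{O(1)}$ for $r\geq 2$, yields
\begin{equation*}
u_{2j}\mapsto u_{2j}\otimes 1+u_{2j-1}\otimes u_1\cdot\tau,\qquad u_{2j+1}\mapsto u_{2j+1}\otimes 1+u_{2j}\otimes u_1;
\end{equation*}
restricting further via the already-established even case gives $f^*(u_{2j})=c_j$ and $f^*(u_{2j+1})=c_j\cdot u_1$. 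The quotient description then follows formally: the listed relations lie in $\ker f^*$, and the quotient of $H[u_1,\ldots,u_n]$ by these relations is a polynomial ring in $u_2,u_4,\ldots,u_n$ (respectively $u_1,u_2,u_4,\ldots,u_{n-1}$), mapping bijectively onto the generators of the target ring.

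The main obstacle is the combination of the first two steps: cleanly packaging the $\hk$-identification of Proposition \ref{PdGLd} so that $f^*$ genuinely becomes the classifying pullback of the subgroup inclusion, and transferring the Whitney sum computation from the \'etale to the Nisnevich side via Remark \ref{GLdetzar} to identify $f^*(c_{2j})$ concretely. Once the even case is secured, the odd case is essentially combinatorial through Proposition \ref{Olmn}, and the quotient presentation is immediate.
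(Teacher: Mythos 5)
Your proof is correct, and the odd case and the final quotient presentation coincide with the paper's argument (factor $GL_d\times\zz/2$ through $O(n-1)\times O(1)$ and apply Proposition \ref{Olmn}); likewise your bidegree argument for $f^*(u_{2i+1})=0$ when $n$ is even is exactly the paper's. Where you genuinely diverge is in establishing the key identity $f^*(u_{2i})=c_i$ for even $n$. The paper does this intrinsically, via the commuting square of group embeddings $P^{n-2}_{d-1}\subset P^n_d$, $O(n-2)\subset O(n)$: the composite $GL_{d-1}\backslash GL_d\row O(n-1)\backslash O(n)$ is an isomorphism in $\dm{\Field}$, so the Gysin triangles that \emph{define} $c_d$ (from the tower for $\bzar{(GL_d)}$ in Proposition \ref{GLd}) and $u_n$ (from the tower for $\bzar{O(n)}$ in the proof of Theorem \ref{u}) are matched under $f^*$, forcing $f^*(u_{2i})=c_i$ on the nose. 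You instead combine the relation $c_{2j}=u_{2j}^2$ from Theorem \ref{u} with the \'etale-side Whitney computation $c(V\oplus V^*)=c(V)^2$ mod $2$ (transported through the isomorphism $\eps$ of Remark \ref{GLdetzar}), obtaining $(f^*(u_{2j}))^2=(c_j)^2$, and then conclude by injectivity of squaring on the homogeneous $\zz/2$-span of weight-$j$ Chern monomials (which, as you note, is all of $\hm^{2j,j}(\bzar{(GL_d)},\zz/2)$ by the same bidegree count). Both arguments are sound. Yours is more self-contained modulo Theorem \ref{u} and classical Chern-class functoriality, and it bypasses any motivic identification of homogeneous spaces; its cost is the extra bookkeeping of naturality of $\eps$ in the group and the Frobenius-injectivity step. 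The paper's version is shorter once the towers are in place and sits closer to the motivic statement proved next in Proposition \ref{PdOn}, but it leaves the compatibility of the two connecting classes more implicit. No gap either way.
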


\begin{proof}
Consider the diagram of group embeddings:
$$
\xymatrix @-0.7pc{
P^{n-2}_{d-1} \ar @{->}[r] \ar @{->}[d] & P^n_d \ar @{->}[d] \\
O(n-2) \ar @{->}[r] & O(n).
}
$$
If $n$ is even, then the composition
%0%0
$$
GL_{d-1}\backslash GL_d\row P^{n-2}_{d-1}\backslash P^n_d
\row O(n-2)\backslash O(n)\row O(n-1)\backslash O(n)
$$
%0%0
is an isomorphism in $\dm{\Field}$.
If $n$ is odd, then the composition
%0%0
$$
GL_{d-1}\backslash GL_d\row P^{n-2}_{d-1}\backslash P^n_d\row O(n-2)\backslash O(n)
$$
%0%0
factors through the map $GL_{d-1}\backslash GL_d\row O(n-2)\backslash O(n-1)$,
which is an isomorphism in $\dm{\Field}$.
This implies that $c_i=f^*(u_{2i})$. The fact that $f^*$ of odd subtle Stiefel--Whitney classes
is zero, for even $n$, follows from the fact that there are no elements of such grading in
$\hm^{*,*'}(\bzar{(GL_d)},\zz/2)$. And for odd $n$, we observe that the map
$GL_d\times\zz/2\row O(n)$ factors through $O(n-1)\times O(1)\row O(n)$. It remains to apply
Proposition \ref{Olmn}.
\end{proof}

Note that the fibration $\bttzar{(P^n_d)}\row\bzar{O(n)}$ is trivial over the graded
components with fibers -
the split Grassmannian $P^n_d\backslash O(n)$. In particular, the graded components
$$
r_i^*M(\bttzar{(P^n_d)}\row\bzar{O(n)})\in\dmDVA{(\bzar{O(n)})_i}
$$
%0%0
belong to the thick
subcategory $DT((\bzar{O(n)})_i)$
generated by Tate-motives.
We have the following general fact:

%%%%%%%%%%%%%%%%%%%%%%%%%%%%%%%%%%%%%%%%%%%%%%%%%%%%%%%%%%%%%%%%%%%%%%%%%%%%%%%%%%%%%%%%%%%%%%
\begin{prop}
\label{gradDT}
Let $F$ be a field of characteristic $p$, $\cy$ be smooth simplicial scheme over
smooth scheme $S$ such that $\pi=\pi_1(CC(\cy))$ is a $p$-group. Let $N\in\dmR{\cy}{F}$ be such motive that its graded components $N_i\in\dmR{\cy_i}{F}$
belong to $DT(\cy_i)$, and for each $\theta:[i]\row [j] \in Mor(\Delta)$
the natural map $f_{\theta}:(\cy_{\theta}^*)(N_j)\row N_i$ is an isomorphism. Then $N\in DT(\cy)$.
\end{prop}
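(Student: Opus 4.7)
The strategy is induction on the Tate-length $k$ of $N_0 \in DT(\cy_0)$. The base case $k \leq 1$ follows immediately from Proposition \ref{gradTate}: when $N_0$ is zero or a pure Tate motive $T(u)[v]$, the hypothesis that all $f_\theta$ are isomorphisms forces each $N_i \in \dmR{\cy_i}{F}$ to be isomorphic to $c_i^*T(u)[v]$, so \ref{gradTate} yields $N \simeq T(u)[v]_\cy \in DT(\cy)$. The cohomological hypothesis $H^1(CC(\cy),F) = 0$ of \ref{gradTate} is supplied by the remark following it, as $\pi$ is a $p$-group and $F$ has characteristic $p$.

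For the inductive step, choose a distinguished triangle
$$
N_0' \row N_0 \row T(u)[v] \row N_0'[1]
$$
in $\dmR{\cy_0}{F}$ obtained by splitting off an extremal Tate piece of $N_0$, with $N_0' \in DT(\cy_0)$ of Tate-length $k-1$. The key step is to globalize the map $N_0 \row T(u)[v]$ to a morphism $\ffi \colon N \row T(u)[v]_\cy$ in $\dmR{\cy}{F}$ restricting to the original over $\cy_0$. For this I would adapt the Postnikov-obstruction argument from the proof of \ref{gradTate}: via the adjunction $Lc_\# \dashv c^*$ one has
$$
\Hom_{\dmR{\cy}{F}}(N, T(u)[v]_\cy) = \Hom_{\dmR{S}{F}}(Lc_\#(N), T(u)[v]),
$$
and the infinite Postnikov tower on $Lc_\#(N)$ has $i$-th graded piece $L(c_i)_\#(N_i)[i]$. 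Since $N_i \in DT(\cy_i)$, this graded piece lies in the thick subcategory generated by $M(\cy_i)(u')[v'+i]$, so the required $\Hom$-vanishings for lifting through the tower follow from the nonnegativity of motivic cohomology exactly as in \ref{gradTate}. The sole surviving obstruction is a class in $\Hom_{gr}(\pi, F^\times)$, which vanishes because $\pi$ is a $p$-group while $F^\times$ has no $p$-torsion in characteristic $p$; this produces the desired $\ffi$ lifting the prescribed map on $\cy_0$.

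Taking the fiber $N' := \op{Cone}[-1](\ffi)$ yields an object of $\dmR{\cy}{F}$ whose $i$-th component fits into a triangle $N_i' \row N_i \row T(u)[v]_{\cy_i} \row N_i'[1]$ that restricts over $\cy_0$ to the original Postnikov triangle. Since the structure maps of $N$ and of $T(u)[v]_\cy$ are isomorphisms, so are those of $N'$, and each $N_i' \in DT(\cy_i)$ has Tate-length $k-1$; by the inductive hypothesis, $N' \in DT(\cy)$. Then $N \in DT(\cy)$ as the cone of a morphism between two objects of $DT(\cy)$. The principal obstacle is the construction of $\ffi$: one must verify that the Postnikov-vanishing analysis of \ref{gradTate} extends from the single-Tate case to our $DT$-setting. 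This should indeed be the case, since all the required $\Hom$-vanishings depend only on the graded components of $Lc_\#(N)$, which is precisely what the hypothesis $N_i \in DT(\cy_i)$ controls componentwise.
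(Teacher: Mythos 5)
Your overall strategy is the same as the paper's: construct a global morphism $\ffi\colon N\row T(u)[v]$ restricting to the projection onto an extremal Tate constituent, pass to $\op{Cone}[-1](\ffi)$, and induct. But the crucial step --- the claim that ``the sole surviving obstruction is a class in $\Hom_{gr}(\pi,F^{\times})$'' --- is where the argument breaks. That identification of the obstruction is only valid when the extremal Tate slot $(l)[r]$ occurs in $N_0$ with multiplicity one, so that the coherence data for the system $\{N_i,f_{\theta}\}$ amounts to automorphisms of a single Tate motive, i.e.\ elements of $F^{\times}$. In general the extremal layer is an $m$-dimensional $F$-vector space and the coherence data is a representation $\pi\row GL_m(F)$, which is certainly not forced to be trivial for a $p$-group in characteristic $p$ (unipotent representations abound); consequently an arbitrarily chosen surjection of that layer onto one copy of $T(l)[r]$ need not be compatible with the $f_{\theta}$'s and need not globalize. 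The paper's proof resolves exactly this point: it uses that every $F[\pi]$-module with $\pi$ a $p$-group and $\op{char}F=p$ is an iterated extension of trivial modules, and maps the extremal layer onto its $\pi$-\emph{coinvariants} $\oplus T(l)[r]$, which is canonical and hence automatically respected by the $f_{\theta}$'s. Your proof needs this representation-theoretic input (or at least a $\pi$-equivariant choice of the quotient map, which exists precisely because the coinvariants are nonzero); the vanishing of $\Hom_{gr}(\pi,F^{\times})$ alone does not suffice.

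Two smaller points. First, ``extremal'' must be made precise for the Postnikov $\Hom$-vanishings to go through: for general $DT$-components the groups $\Hom(L(c_i)_{\#}(N_i)[i],T(u)[v+k])$ do not vanish for arbitrary twists, so one must take $(l)[r]$ minimal in a specific sense --- the paper takes $l$ minimal for the slice filtration of \cite[Lemma 5.9]{VoMSS} and then $r$ minimal for the $t$-structure with heart $LC(\cy_i)$, and it is this double minimality that kills the obstructions to descending the map along $Lc_{\#}(N)$. Second, your induction is on a ``Tate-length'' of $N_0$, which is not obviously well defined for an object of a thick (retract-closed) subcategory; the paper instead inducts on the pair $(l,r)$, which is canonically attached to $N$ via the slice filtration. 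Both issues are repairable, but as written the proof has a genuine hole at the equivariance step.
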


\begin{proof}
By \cite[Lemma 5.9]{VoMSS} we have a slice filtration on each of $N_i$ with only finitely many nontrivial
graded pieces $s_m(N_i)\in DT_m(\cy_i)$. Moreover, for each $\theta:[i]\row [j] \in Mor(\Delta)$, we
have the natural map $f_{\theta}:(\cy_{\theta}^*)(N_j)\row N_i$ which uniquely extends to the filtration
according to \cite[Lemma 5.11]{VoMSS}. It follows from \cite[Remarks 5.19, 5.21]{VoMSS} and our
conditions on $F$ and $\cy$ (which guarantee that any $F[\pi]$-module is an extension of trivial ones)
that $DT'_0(\cy)=DLC(\cy)$ (loc. cit.). Let $l=\op{min}(m|\,s_m(N_i)\neq 0)$ (for some $=$ for any $i$).
Then $s_l(N_i)$ considered as an element of $DT'_0(\cy_i)$ (see \cite[Proposition 5.20]{VoMSS}) has natural
filtration coming from the $t$-structure with the heart $LC(\cy_i)=F-mod$ (\cite[Remark 5.21]{VoMSS}).
This filtration is respected by the maps $f_{\theta}$, which provide the action of $\pi$ on the
$t$-graded pieces $(s_l(N_i))_k$. If $r=\op{min}(k|\,(s_l(N_i))_k\neq 0)$, then representing $(s_l(N_i))_r$
as an extension of trivial $\pi$-modules, we get the natural map
$(s_l(N_i))_r\row \oplus T(l)[r]$ (the coinvariants of the $\pi$-action)
which is respected by the maps $f_{\theta}$ and so gives the map
$Lc_{\#}N\row \oplus T(l)[r]$ in $\dmR{S}{F}$ (here we are using arguments from the proof of Proposition
\ref{gradTate}). By conjugation we get the map
$\ffi:N\row \oplus T(l)[r]$. Clearly the motive $N':=Cone[-1](\ffi)$ still satisfies the conditions of our
Proposition. Repeating this process we will eventually kill the $(l)[r]$-component of $N$, and the
induction on $l$ and $r$ finishes the proof.
\end{proof}

Now we can compute the motive of $\bttzar{(P^n_d)}$ in $\dmDVA{\bzar{O(n)}}$.

%%%%%%%%%%%%%%%%%%%%%%%%%%%%%%%%%%%%%%%%%%%%%%%%%%%%%%%%%%%%%%%%%%%%%%%%%%%%%%%%%%%%%%%
\begin{prop}
\label{PdOn}
In $\dmDVA{\bzar{O(n)}}$, we have the natural identification:
$$
M(\bttzar{(P^n_d)}\row\bzar{O(n)})=\otimes_{\delta\leq j<n/2}
\op{Cone}[-1]\left(\tate{}\stackrel{v_{2j+1}}{\lrow}\tate{}(j)[2j+1]\right),
$$
where $v_{2j+1}=u_{2j+1}$, for even $n$, $=u_{2j+1}-u_{2j}\cdot u_1$, for odd $n$, and
$\delta=n-2d$.
\end{prop}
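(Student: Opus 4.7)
The plan is to locate both sides of the claimed identification in the thick Tate subcategory $DT(\bzar{O(n)})$, to construct a canonical comparison morphism from the vanishing of $v_{2j+1}$ on $\bttzar{(P^n_d)}$, and to verify it is an isomorphism on motivic cohomology over $\bzar{O(n)}$.

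First I would apply Proposition \ref{gradDT} with $\cy = \bzar{O(n)}$, $F = \zz/2$ and $N := M(\bttzar{(P^n_d)} \row \bzar{O(n)})$. The group $\pi_1(CC(\bzar{O(n)})) = \pi_0(O(n)) = \zz/2$ is a $2$-group; the graded components $r_i^*N$ lie in $DT((\bzar{O(n)})_i)$ by the triviality of the fibration over each $(\bzar{O(n)})_i$ noted just before the statement (with cellular fiber the split Grassmannian $P^n_d \backslash O(n)$); and the structure maps $f_\theta$ are isomorphisms because this trivialization is compatible with the faces and degeneracies of $\bzar{O(n)}$. Hence $N \in DT(\bzar{O(n)})$. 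The right-hand side $K := \otimes_{\delta \leq j < n/2} \op{Cone}[-1]\bigl(\TTT \stackrel{v_{2j+1}}{\row} \TTT(j)[2j+1]\bigr)$ is in $DT(\bzar{O(n)})$ by construction.

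Next, by Proposition \ref{Pdu} the pullback $f^*(v_{2j+1})$ vanishes in $\hm^{*,*'}(\bttzar{(P^n_d)}, \zz/2)$, so in $\dmDVA{\bzar{O(n)}}$ the composition $N \row \TTT \stackrel{v_{2j+1}}{\row} \TTT(j)[2j+1]$ is zero for each relevant $j$. This provides a lift of the structure map $N \row \TTT$ to a morphism $N \row \op{Cone}[-1](v_{2j+1})$, and combining these lifts through the tensor structure yields a morphism $\Phi \colon N \row K$ over $\TTT$. To show $\Phi$ is an isomorphism, one checks that $\{v_{2j+1}\}_{\delta \leq j < n/2}$ is a regular sequence in $\hm^{*,*'}(\bzar{O(n)}, \zz/2) = H[u_1, \dots, u_n]$: for even $n$ these are a subset of the polynomial generators, while for odd $n$ the relation $v_{2j+1} = u_{2j+1} - u_{2j} u_1$ lets one eliminate $u_{2j+1}$ to obtain the polynomial ring $H[u_1, u_2, u_4, \dots, u_{2d}]$. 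The Koszul computation then gives $\hm^*(K) = H[u_1, \dots, u_n]/(v_{2j+1})$, which by Proposition \ref{Pdu} equals $\hm^*(N) = \hm^*(\bttzar{(P^n_d)}, \zz/2)$, and the map induced by $\Phi$ is the natural quotient. Since both $N$ and $K$ lie in $DT(\bzar{O(n)})$, which is generated by the Tate objects $\TTT(a)[b]$, $\Phi$ being an isomorphism on $\hm^{*,*'}(\bzar{O(n)}, -)$ forces it to be an isomorphism in $\dmDVA{\bzar{O(n)}}$.

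The main obstacle is the construction of $\Phi$: each lift $N \row \op{Cone}[-1](v_{2j+1})$ is determined only up to $\Hom(N, \TTT(j)[2j])$, so combining them coherently into a single morphism into the tensor product requires compatible choices. One clean route is to build $\Phi$ recursively along the natural Postnikov filtration of $K$ inside $DT(\bzar{O(n)})$, where the regular-sequence structure ensures that the successive obstructions vanish. An alternative is to exploit the factorization $\ezar{O(n)} \row \bttzar{(P^n_d)} \row \bzar{O(n)}$: the first map is a $P^n_d$-torsor whose motive in $\dmDVA{\bttzar{(P^n_d)}}$ equals $\otimes_j \op{Cone}[-1](c_j) = f^*\bigl(\otimes_j \op{Cone}[-1](u_{2j})\bigr)$ by Propositions \ref{GLd}, \ref{PdGLd} and \ref{Pdu}, so the projection formula gives $M(\ezar{O(n)} \row \bzar{O(n)}) = (\otimes_j \op{Cone}[-1](u_{2j})) \otimes N$; comparing with Proposition \ref{BOcone} then isolates $N = \otimes_j \op{Cone}[-1](u_{2j+1})$ after a Koszul cancellation in the even-indexed tensor factor.
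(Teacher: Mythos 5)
Your proof follows essentially the same route as the paper's: use the vanishing $f^*(v_{2j+1})=0$ together with the regularity of the sequence $\{v_{2j+1}\}$ to factor the structure map $M(\bttzar{(P^n_d)}\row\bzar{O(n)})\row\tate{}$ through the tensor product of cones inducing an isomorphism on $\hm^{*,*'}$, then invoke Proposition \ref{gradDT} (via $CC(\bzar{O(n)})=K(\zz/2,1)$) to place both sides in $DT(\bzar{O(n)})$ and conclude by conservativity of cohomology on $DT$ (\cite[Lemma 5.2]{VoMSS}). Your extra attention to the coherence of the lifts defining $\Phi$ only makes explicit what the paper absorbs into the regular-sequence argument, so the proposal is correct and matches the paper's proof.
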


\begin{proof}
Since $f^*(v_{2j+1})=0$, and $v_{2j+1}$'s form a regular sequence, it follows that the map
$$
M(\bttzar{(P^n_d)}\stackrel{f}{\row}\bzar{O(n)})\row\tate{}
$$
factors through
$\otimes_{\delta\leq j<n/2}
\op{Cone}[-1]\left(\tate{}\stackrel{v_{2j+1}}{\lrow}\tate{}(j)[2j+1]\right)$,
and the respective map induces an isomorphism on $\hm^{*,*'}(-,\zz/2)$.
Observing that $CC(\bzar{O(n)})=K(\zz/2,1)$, from Proposition \ref{gradDT} we obtain that $M(\bttzar{(P^n_d)}\row\bzar{O(n)})$ belongs to
$DT(\bzar{O(n)})$. So, we have a morphism between two objects of $DT$ which gives an isomorphism
on cohomology. It must be an isomorphism by \cite[Lemma 5.2]{VoMSS}. Thus,
$$
M(\bttzar{(P^n_d)}\row\bzar{O(n)})=\otimes_{\delta\leq j<n/2}
\op{Cone}[-1]\left(\tate{}\stackrel{v_{2j+1}}{\lrow}\tate{}(j)[2j+1]\right),
$$
in $\dmDVA{\bzar{O(n)}}$.
\end{proof}

Now we can compute the motive of $G_d$.

%%%%%%%%%%%%%%%%%%%%%%%%%%%%%%%%%%%%%%%%%%%%%%%%%%%%%%%%%%%%%%%%%%%%%%%%%%%%%%%%
\begin{thm}
\label{GdhiqEV}
Let $q$ be a quadratic form of even dimension $n=2d$, and $G_d(q)$ be it's highest quadratic Grassmannian.
Then, in $\dmkDVA$,
$$
M(G_d(q))=\otimes_{0\leq j\leq d-1}
\op{Cone}[-1]\left(M(\hiq{G_d(q)})\stackrel{u_{2j+1}(q)}{\lrow}M(\hiq{G_d(q)})(j)[2j+1]\right).
$$
\end{thm}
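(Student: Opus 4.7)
\noindent\emph{Proof plan.} The plan is to imitate the proof of Theorem \ref{XhiqX}, substituting the universal torsor $\ezar{O(n)}\row\bzar{O(n)}$ (whose motive in $\dmDVA{\bzar{O(n)}}$ is computed in Proposition \ref{BOcone}) by the $\bzar{O(n)}$-bundle $\bttzar{(P^n_d)}\row\bzar{O(n)}$ of Proposition \ref{PdOn}. First I would record that for $n=2d$ even and any field extension $L/\Field$, the existence of a maximal totally isotropic subspace of $q_L$ forces $q_L$ to be hyperbolic, so by \ref{HsIsoOfEYToEXSSS} we have $\hiq{G_d(q)}\simeq\hiq{X_q}$ in $\hs$; in what follows I identify the two and treat $u_{2j+1}(q)$ as $\ax{X_q}^*(u_{2j+1})$.

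Next I would take the $P^n_d$-quotient (with diagonal action) of the left cartesian square of (\ref{hiXBOnDiagr}) to produce a cartesian square in $\Spc$:
$$
\xymatrix @-0.7pc{
P^n_d\backslash(X_q\times\ezar{O(n)}) \ar @{->}[r] \ar @{->}[d] & \bttzar{(P^n_d)} \ar @{->}[d] \\
\hiq{X_q} \ar @{->}[r]^(0.45){\ax{X_q}} & \bzar{O(n)}.
}
$$
The change of variables $(x,(g_0,\ldots,g_m))\mapsto([x]_{P^n_d},(g_m^{-1}x,\ldots,g_0^{-1}x))$ identifies the top-left corner with $G_d(q)\times\hiq{X_q}$, the left vertical becoming projection onto the second factor; the assignment is $P^n_d$-invariant because $h\cdot(x,(g_i))=(hx,(hg_i))$ preserves each $g_i^{-1}x$, and checking compatibility with the partial projections and partial diagonals is the one explicit computation, carried out exactly as in Proposition \ref{Ghiq}. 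Pulling back the decomposition of Proposition \ref{PdOn} along $\ax{X_q}$ under the tensor-product-preserving restriction $\dmDVA{\bzar{O(n)}}\row\dmDVA{\hiq{X_q}}$, and using $\delta=0$ and $v_{2j+1}=u_{2j+1}$ in the even case, yields
$$
M(G_d(q)\times\hiq{X_q}\row\hiq{X_q})=\bigotimes_{j=0}^{d-1}\op{Cone}[-1]\left(T\stackrel{u_{2j+1}(q)}{\lrow}T(j)[2j+1]\right)
$$
in $\dmDVA{\hiq{X_q}}$, where $T$ denotes the Tate-motive over $\hiq{X_q}$.

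The final step is to apply the forgetful functor to $\dmkDVA$, which sends $T$ to $M(\hiq{X_q})$ and preserves $\otimes$ thanks to the diagonal isomorphism $M(\hiq{X_q})\otimes M(\hiq{X_q})=M(\hiq{X_q})$ invoked at the end of the proof of Theorem \ref{XhiqX}. Its image is $M(G_d(q)\times\hiq{X_q})$, and this equals $M(G_d(q))$ because the projection $G_d(q)\times\hiq{X_q}\row G_d(q)$ is an $\hs$-equivalence: $\hiq{X_q}$ is a subobject of $S$ in $\hs$, so it suffices to check on Nisnevich stalks that over any henselization of a point of $G_d(q)$ the torsor $X_q$ becomes trivial, and this reduces to the residue field, where any rational point of $G_d(q)$ exhibits $q$ as hyperbolic. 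The one non-formal step is thus the degreewise simplicial identification in the cartesian square above; once it is established, everything else transports directly from Theorem \ref{XhiqX}.
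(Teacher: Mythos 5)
Your proposal is correct and follows essentially the same route as the paper: the same cartesian square relating $P^n_d\backslash(X_q\times\ezar{O(n)})$ to $\bttzar{(P^n_d)}\row\bzar{O(n)}$, pullback of Proposition \ref{PdOn} along $\ax{X_q}$, and the tensor-compatible forgetful functor to $\dmkDVA$. The only cosmetic difference is that you identify the top-left corner explicitly as $G_d(q)\times\hiq{X_q}$ before descending to $M(G_d(q))$, whereas the paper works with $\tilde{G}_d$ directly and invokes the local splitting of $X_q\row G_d(q)$ only at the last step.
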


\begin{proof} Let $X=X_q$ be the respective $O(n)$-torsor.
We have a diagram with cartesian squares in $\Spc$:
$$
\begin{CD}
\ezar{O(n)} & @>>> & \bttzar{(P^n_d)} & @>>> & \bzar{O(n)}\\
@AAA & & @AAA & & @AAA \\
X\times\ezar{O(n)} & @>>> & P^n_d\backslash (X\times\ezar{O(n)}) & @>>> & \hiq{X},
\end{CD}
$$
coming from the $P^n_d$ and $O(n)$ actions.
Denote $\tilde{G}_d:=P^n_d\backslash (X\times\ezar{O(n)})$.
We obtain that $M(\tilde{G}_d\row\hiq{X})$ in $\dmDVA{\hiq{X}}$ is simply the image of the
$M(\bttzar{(P^n_d)}\row\bzar{O(n)})$ under the natural functor
$\dmDVA{\bzar{O(n)}}\row\dmDVA{\hiq{X}}$. Since this functor respects the tensor product,
Proposition \ref{PdOn} implies that
$$
M(\tilde{G}_d\row\hiq{X})=\otimes_{0\leq j\leq d-1}
\op{Cone}[-1]\left(\tate{\hiq{X_q}}\stackrel{u_{2j+1}(q)}{\lrow}\tate{\hiq{X_q}}(j)[2j+1]\right).
$$
It remains to apply the forgetful functor $\dmDVA{\hiq{X}}\row\dmkDVA$, which also respects
the tensor product, since the diagonal map $M(\hiq{X})\stackrel{\Delta}{\row} M(\hiq{X})\otimes M(\hiq{X})$
is an isomorphism (by \cite[Lemma 6.8, Example 6.3]{VoMSS}). The result will be $M(\tilde{G}_d)$ which coincides with $M(G_d)$ since
the projection $X\row G_d$ is split over every point
(notice, that this would not work for other Grassmannians, or for odd $n$).
\end{proof}

There is an odd-dimensional variant as well.
Let $q$ be a form of odd dimension $n=2d+1$, and $p=q\perp\la a\ra$, where $a=\ddet_{\pm}(q)$
be an $(n+1)$-dimensional form from $I^2$, containing it.
Then $\hiq{X_q}=\hiq{X_p}\times\hiq{\{a\}}$, and it follows from Proposition \ref{Olmn} that,
for $\hiq{X_q}\stackrel{\nu}{\row}\hiq{X_p}$, $\nu^*(u_{2j+1}(p))=u_{2j+1}(q)+u_{2j}(q)\cdot u_1(q)$.
Taking into account that $G_{d+1}(p)=G_d(q)\coprod G_d(q)$, and $\hiq{X_p}=\hiq{G_d(q)}$ we get:

%%%%%%%%%%%%%%%%%%%%%%%%%%%%%%%%%%%%%%%%%%%%%%%%%%%%%%%%%%%%%%%%%%%%%%%%%%%%%%%%%%%
\begin{prop}
\label{GdhiqOD}
Let $q$ be a form of odd dimension $n=2d+1$, and $p=q\perp\la\ddet_{\pm}(q)\ra$. Then the motive
of the highest Grassmannian of $q$ can be presented as:
$$
M(G_d(q))=\otimes_{1\leq j\leq d}
\op{Cone}[-1]\left(M(\hiq{G_d(q)})\stackrel{u_{2j+1}(p)}{\lrow}M(\hiq{G_d(q)})(j)[2j+1]\right).
$$
\end{prop}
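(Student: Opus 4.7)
The plan is to reduce the odd-dimensional case to Theorem \ref{GdhiqEV} applied to the even-dimensional form $p$. Since $p \in I^2$, Theorem \ref{In} forces $u_1(p) = 0$; this is the key mechanism, as the vanishing of $u_1(p)$ degenerates the $j = 0$ term of the Theorem \ref{GdhiqEV} expansion for $p$ into a direct sum $M \oplus M$, which then absorbs the ``doubling'' coming from the given geometric identity $G_{d+1}(p) = G_d(q) \coprod G_d(q)$.

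Applying Theorem \ref{GdhiqEV} to $p$ (with $d' = d+1$) and using the identification $\hiq{X_p} = \hiq{G_d(q)}$ recorded immediately before the statement, one obtains
$$
M(G_{d+1}(p)) = \otimes_{0 \leq j \leq d}\op{Cone}[-1]\left(M(\hiq{G_d(q)}) \stackrel{u_{2j+1}(p)}{\lrow} M(\hiq{G_d(q)})(j)[2j+1]\right).
$$
Substituting $u_1(p) = 0$, the $j = 0$ slot collapses to $M \oplus M$ where $M := M(\hiq{G_d(q)})$, and pulling this direct sum out of the tensor product (absorbing the surplus copy of $M$ via the diagonal isomorphism $M \otimes M = M$ from \cite[Lemma 6.8]{VoMSS}) yields
$$
M(G_{d+1}(p)) = M' \oplus M', \quad M' := \otimes_{1 \leq j \leq d}\op{Cone}[-1]\left(M \stackrel{u_{2j+1}(p)}{\lrow} M(j)[2j+1]\right),
$$
and $M'$ is precisely the motive on the right-hand side of the proposition. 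Together with the independent identity $M(G_{d+1}(p)) = M(G_d(q)) \oplus M(G_d(q))$ coming from $G_{d+1}(p) = G_d(q) \coprod G_d(q)$, this forces $M(G_d(q)) = M'$.

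The main obstacle will be the final matching step: one must verify that the summand $M(G_d(q))$ of the disjoint-union decomposition is identified with an $M'$ summand of the Theorem \ref{GdhiqEV} decomposition, rather than with some other motive of the same cohomological shape. The decisive observation is that both splittings arise from the same source, namely the component map $G_{d+1}(p) \to \Spec\Field \coprod \Spec\Field$: the disjoint-union decomposition is its direct consequence, while on the Theorem \ref{GdhiqEV} side it is encoded by the $T \oplus T$ factor produced in the $j = 0$ slot, since $u_1(p)$ is precisely the obstruction distinguishing the two rational families of maximal isotropic $(d+1)$-subspaces of $p$. The split inclusion $G_d(q) \hookrightarrow G_{d+1}(p)$ of one component therefore pairs $M(G_d(q))$ with one of the $M'$ summands, completing the proof.
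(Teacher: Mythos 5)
Your proposal follows essentially the same route as the paper: apply Theorem \ref{GdhiqEV} to the even-dimensional form $p\in I^2$, use $G_{d+1}(p)=G_d(q)\coprod G_d(q)$ and $\hiq{X_p}=\hiq{G_d(q)}$, observe that the $j=0$ tensor factor splits because $u_1(p)=0$, and then cancel the resulting doubling. The paper states the proposition with no proof beyond listing these inputs, and your treatment of the one delicate point --- matching the $T\oplus T$ splitting of the $j=0$ cone with the decomposition of $G_{d+1}(p)$ into its two connected components, so that the ``division by two'' is legitimate and one does not merely have an abstract isomorphism $N\oplus N\cong M(G_d(q))\oplus M(G_d(q))$ --- is the correct way to close that gap.

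One caveat: you justify $u_1(p)=0$ by appealing to Theorem \ref{In}, which appears later in the paper and whose proof (via Proposition \ref{cohIn}, Proposition \ref{JIn} and Proposition \ref{minJU}) itself invokes Proposition \ref{GdhiqOD}; as written this is circular. The vanishing you need is elementary and should be proved directly: $\tau\cdot u_1(p)=w_1(p)=\{\ddet_{\pm}(p)\}=0$ because $p\in I^2$, and multiplication by $\tau$ is injective on $\hm^{1,0}(\hiq{X_p},\zz/2)$ by \cite[Proposition 2.3]{OVV}. (Alternatively, it follows from the restriction formula $\nu^*(u_1(p))=u_1(q)+u_0(q)\cdot u_1(q)=0$ displayed just before the proposition, together with injectivity of $\nu^*$ in this bidegree.) With that substitution your argument is complete.
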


\begin{exa}
Let $q=\la a,b,-ab,-c,-d,cd\ra$ be an Albert form. Then $d=3$, and $G_3(q)=S\coprod S$, where
$S=SB(\{a,b\}+\{c,d\})$ is the Severi-Brauer variety corresponding to the element
$\{a,b\}+\{c,d\}\in K^M_2(k)/2$. It follows from the above that
$$
M(S)=\op{Cone}[-1]\left(M(\hiq{S})\stackrel{u_{3}(q)}{\lrow}M(\hiq{S})(1)[3]\right)\otimes
\op{Cone}[-1]\left(M(\hiq{S})\stackrel{u_{5}(q)}{\lrow}M(\hiq{S})(2)[5]\right).
$$
Even in this simple case, the decomposition into tensor product of binary motives was unknown (though, expected ... for 18 years).
\end{exa}

\begin{rem}
Another case where the presentation of the motive of a variety as an extension of motives
of Chech simplicial schemes is known is the case of a quadric. The canonical decomposition there
was obtained in {\rm \cite[Theorems 3.1, 3.7]{IMQ}}. Though, in the case of the highest quadratic
Grassmannian above we get nice poly-binary structure with the precise description of
connections involved and all
elementary pieces of the same kind (as opposed to the case of a quadric), which is related to
the fact that $G_d(q)$ is generically split.
\end{rem}

We have the following "flexible" versions of Proposition \ref{BOcone}, Proposition \ref{PdOn}, and
Theorem \ref{XhiqX}, Theorem \ref{GdhiqEV} (Proposition \ref{GdhiqOD}), respectively.

%%%%%%%%%%%%%%%%%%%%%%%%%%%%%%%%%%%%%%%%%%%%%%%%%%%%%%%%%%%%%%%%%%%%%%%%%%%%%%%%%%%%%%%%%%%%%%%%%%%%
\begin{prop}
\label{flexX}
Let $\wt{u_i}=u_i+\text{decomposable terms}\in\hm^{i,[i/2]}(\bzar{O(n)},\zz/2)$, for $i=1,\ldots,n$ be some elements. Then
in $\dmDVA{\bzar{O(n)}}$ and in $\dmDVA{k}$, respectively:\\
%0%0 nachalo
%\begin{itemize}
%\item[$1)$]
\vspace{-5mm}
\begin{equation*}
\begin{split}
&(1)\hspace{2cm} M(\ezar{O(n)}\row\bzar{O(n)})=\otimes_{i=1}^n
\op{Cone}[-1]\left(\tate{}\stackrel{\wt{u_i}}{\row}\tate{}([i/2])[i]\right),\\
%\item[$2)$]
&(2)\hspace{2cm}
M(X)=\otimes_{i=1}^n\op{Cone}[-1]\left(M(\hiq{X})\stackrel{\wt{u_i}(X)}{\lrow}M(\hiq{X})([i/2])[i]\right).
\hspace{3cm}\phantom{a}
\end{split}
\end{equation*}
%\end{itemize}
%0%0 konec
\end{prop}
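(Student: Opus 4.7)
The main content is part (1); once (1) is established, (2) follows by the same pullback-and-forget strategy used in the proof of Theorem~\ref{XhiqX}. Namely, pulling back (1) along the structure map $\hiq{X}\to\bzar{O(n)}$ (which by diagram~(\ref{hiXBOnDiagr}) identifies the pullback of $M(\ezar{O(n)}\to\bzar{O(n)})$ with $M(X\times\ezar{O(n)}\to\hiq{X})$) and then applying the forgetful functor $\dmDVA{\hiq{X}}\to\dmkDVA$ produces (2); both functors respect tensor products, the second because $M(\hiq{X})$ is diagonal-idempotent by \cite[Lemma~6.8, Example~6.3]{VoMSS}.

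The heuristic for (1) is that $\bigotimes_{i=1}^{n}\op{Cone}[-1]\bigl(\tate{}\stackrel{v_i}{\to}\tate{}([i/2])[i]\bigr)$ is a motivic Koszul complex built from the regular sequence $(v_1,\ldots,v_n)$, and classically such a complex depends, up to canonical isomorphism, only on the ideal generated by the $v_i$'s.  Since $\wt{u_i}=u_i+\text{decomposable}$, the sequences $\{u_i\}$ and $\{\wt{u_i}\}$ generate the same ideal in $\hm^{*,*'}(\bzar{O(n)},\zz/2)$, so one expects the two tensor products to be isomorphic.

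The plan is induction on a single monomial correction.  Write each $\wt{u_i}-u_i$ as a finite sum of monomials in $u_1,\ldots,u_{i-1}$ (the bidegree $(i,[i/2])$ together with decomposability forces this), and reduce to showing that if one replaces a single $u_k$ by $u_k+c\cdot u_{i_1}\cdots u_{i_r}$ (a single monomial, $r\geq 2$, $i_j<k$), then the tensor product is unchanged.  For this step I would construct the isomorphism explicitly by mimicking the classical Koszul change-of-basis: on the piece of the tensor product indexed by a subset $I\ni k$, add an off-diagonal contribution to the piece indexed by $(I\setminus\{k\})\cup\{i_1,\ldots,i_r\}$, given by multiplication by $\pm c$.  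The prototype is the two-factor calculation showing that
$$
\op{Cone}[-1](\tate{}\stackrel{u_1}{\to}M_1)\otimes\op{Cone}[-1](\tate{}\stackrel{u_2+\tau u_1^2}{\to}M_2)\cong\op{Cone}[-1](\tate{}\stackrel{u_1}{\to}M_1)\otimes\op{Cone}[-1](\tate{}\stackrel{u_2}{\to}M_2),
$$
realized by the chain automorphism that is the identity on $\tate{}$ and on $M_1\otimes M_2$, and on the middle term $M_1\oplus M_2$ has the lower-triangular form $(m_1,m_2)\mapsto (m_1,m_2-\tau u_1\cdot m_1)$; the general case is the same bookkeeping with shifts.

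The principal obstacle is making this change-of-basis rigorous inside $\dmDVA{\bzar{O(n)}}$: one must produce maps between iterated cones of Tate motives and verify the chain-map and homotopy relations directly in the derived category.  A cleaner route, analogous to the proof of Proposition~\ref{PdOn}, is to observe that both tensor products belong to the thick tensor subcategory $DT(\bzar{O(n)})$, construct a natural morphism between them (for instance by lifting the identity of ideals to the Koszul level), and then invoke \cite[Lemma~5.2]{VoMSS} to reduce the question to a comparison of motivic cohomology, where the classical fact that the Koszul complex depends only on the ideal generated by its regular sequence finishes the argument.
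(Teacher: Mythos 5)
Your proposal is essentially correct, and your closing ``cleaner route'' is in fact the paper's actual proof: the paper observes that, since $\wt{u_i}$, $i=1,\ldots,n$ is a regular sequence, the natural augmentation $M(\ezar{O(n)}\row\bzar{O(n)})\row\tate{}$ factors through $\otimes_{i=1}^n\op{Cone}[-1]\bigl(\tate{}\stackrel{\wt{u_i}}{\row}\tate{}([i/2])[i]\bigr)$ inducing an isomorphism on $\hm^{*,*'}$; since $M(\ezar{O(n)}\row\bzar{O(n)})$ lies in $DT(\bzar{O(n)})$ by Proposition \ref{BOcone}, \cite[Lemma 5.2]{VoMSS} upgrades this to an isomorphism, and (2) then follows exactly as in Theorem \ref{XhiqX}. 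The one point where your formulation is slightly less economical: you propose to compare the two Koszul-type tensor products with each other by ``lifting the identity of ideals to the Koszul level,'' which is precisely the kind of derived-category chain-level construction you rightly flag as delicate. The paper avoids this entirely by comparing the $\wt{u_i}$-object not with the $u_i$-object but with $M(\ezar{O(n)}\row\bzar{O(n)})$ itself: each $\wt{u_i}$ restricts to zero on that motive (because $u_i$ and all decomposables do), so the augmentation lifts through each cone, and regularity guarantees the tensor of these liftings is a cohomology isomorphism. Your first, explicit change-of-basis argument (induction on monomial corrections with lower-triangular automorphisms of the iterated cones) is a genuinely different and more hands-on route; it would work in a strict model but, as you note, verifying the homotopy coherences directly in $\dmDVA{\bzar{O(n)}}$ is exactly the difficulty that the $DT$ rigidity lemma is designed to bypass, so you were right to fall back on it.
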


\begin{proof}
Since the sequence $\wt{u_i},\,i=1,\ldots,n$ is regular, the natural map
$M(\ezar{O(n)}\row\bzar{O(n)})\row\tate{}$ can be factored through
$\otimes_{i=1}^n\op{Cone}[-1]\left(\tate{}\stackrel{\wt{u_i}}{\row}\tate{}([i/2])[i]\right)$
inducing an isomorphism on $\hm^{*,*'}$. By Proposition \ref{BOcone},
$M(\ezar{O(n)}\row\bzar{O(n)})$ belongs to the thick subcategory $DT(\bzar{O(n)})$ generated
by Tate-motives. By \cite[Lemma 5.2]{VoMSS}, our map is an isomorphism. This settles 1). Then 2) follows
as in the proof of Theorem \ref{XhiqX}.
\end{proof}

The case of Grassmannians can be done in exactly the same way
(we formulate the even dimensional case only, the other one is analogous):

%%%%%%%%%%%%%%%%%%%%%%%%%%%%%%%%%%%%%%%%%%%%%%%%%%%%%%%%%%%%%%%%%%%%%%%%%%%%%%%%%%%%%%%%%%%%%%%%
\begin{prop}
\label{flexGd}
Let $n=2d$ be even,
and
$$
\wt{u_{2j+1}}=u_{2j+1}+\text{decomposable terms}\in\hm^{2j+1,j}(\bzar{O(n)},\zz/2), \quad (j=0,\ldots,d-1).
$$
Then
in $\dmDVA{\bzar{O(n)}}$ and in $\dmDVA{k}$, respectively:
\begin{equation*}
\begin{split}
&(1)\hspace{1cm} M(\bttzar{(P^n_d)}\row\bzar{O(n)})=\otimes_{0\leq j\leq d-1}
\op{Cone}[-1]\left(\tate{}\stackrel{\wt{u_{2j+1}}}{\lrow}\tate{}(j)[2j+1]\right),\\
&(2)\hspace{1cm} M(G_d(q))=\otimes_{0\leq j\leq d-1}
\op{Cone}[-1]\left(M(\hiq{G_d(q)})\stackrel{\wt{u_{2j+1}}(q)}{\lrow}M(\hiq{G_d(q)})(j)[2j+1]\right).
\end{split}
\end{equation*}
\end{prop}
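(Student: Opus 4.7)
The plan is to mirror closely the proof of Proposition \ref{flexX}, using Proposition \ref{PdOn} in place of Proposition \ref{BOcone}.

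For part (1), the key observation is that adding decomposable corrections to the $u_{2j+1}$ does not destroy regularity: since the difference $\wt{u_{2j+1}}-u_{2j+1}$ lies in the ideal generated by products of the $u_i$'s, the sequence $\wt{u_{2j+1}}$, $0\leq j\leq d-1$, remains a regular sequence in $\hm^{*,*'}(\bzar{O(n)},\zz/2)$. This regularity lets me factor the natural map $M(\bttzar{(P^n_d)}\row\bzar{O(n)})\row \tate{}$ through
\[
\otimes_{0\leq j\leq d-1}\op{Cone}[-1]\!\left(\tate{}\stackrel{\wt{u_{2j+1}}}{\lrow}\tate{}(j)[2j+1]\right),
\]
exactly as in \ref{flexX}. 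This factored map induces an isomorphism on $\hm^{*,*'}(-,\zz/2)$, since the decomposable corrections sit in strictly lower filtration and the associated graded agrees with the computation in Proposition \ref{PdOn}.

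Next I invoke Proposition \ref{PdOn} to conclude that $M(\bttzar{(P^n_d)}\row\bzar{O(n)})$ lies in the thick subcategory $DT(\bzar{O(n)})$ generated by Tate motives; the right-hand tensor product of cones is manifestly in $DT(\bzar{O(n)})$ as well. Hence the candidate map is a morphism between two objects of $DT(\bzar{O(n)})$ inducing an isomorphism on motivic cohomology, so by \cite[Lemma 5.2]{VoMSS} it is an isomorphism. This settles (1).

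For part (2), I proceed exactly as in the proof of Theorem \ref{GdhiqEV}. Pulling back (1) along the restriction functor $\dmDVA{\bzar{O(n)}}\row\dmDVA{\hiq{X_q}}$ gives
\[
M(\tilde{G}_d\row\hiq{X_q})=\otimes_{0\leq j\leq d-1}\op{Cone}[-1]\!\left(\tate{\hiq{X_q}}\stackrel{\wt{u_{2j+1}}(q)}{\lrow}\tate{\hiq{X_q}}(j)[2j+1]\right),
\]
using that this functor is monoidal. Then I apply the forgetful functor $\dmDVA{\hiq{X_q}}\row\dmDVA{k}$, which is also monoidal because the diagonal $M(\hiq{X_q})\row M(\hiq{X_q})\otimes M(\hiq{X_q})$ is an isomorphism by \cite[Lemma 6.8, Example 6.3]{VoMSS}. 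Since the projection $X_q\row G_d(q)$ is split over every point (the key point that uses $n$ even), $M(\tilde{G}_d)=M(G_d(q))$, yielding (2).

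The only non-routine step is verifying that the factored map is an isomorphism on $\hm^{*,*'}$ after the decomposable perturbation, i.e.\ that the ``decomposable terms'' really do not spoil the cohomology calculation. This is handled by the standard regularity/filtration argument: work with the decreasing filtration on $\hm^{*,*'}(\bzar{O(n)},\zz/2)$ by powers of the ideal $(u_1,\dots,u_n)$; the associated graded of multiplication by $\wt{u_{2j+1}}$ equals that of $u_{2j+1}$, so the Koszul-type complex computing cohomology of the tensor product of cones is quasi-isomorphic to the unperturbed one. Everything else is a formal application of the results already established in this section.
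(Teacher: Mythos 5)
Your proof is correct and is exactly the route the paper intends: the paper omits the proof of this proposition, remarking only that the case of Grassmannians ``can be done in exactly the same way'' as Proposition \ref{flexX}, and your argument is precisely that adaptation (regularity of the perturbed sequence, the factoring of $M(\bttzar{(P^n_d)}\row\bzar{O(n)})\row\tate{}$, membership in $DT(\bzar{O(n)})$ via Proposition \ref{PdOn} plus \cite[Lemma 5.2]{VoMSS}, and then the monoidal restriction and forgetful functors as in Theorem \ref{GdhiqEV}). The only point worth making explicit is that the factoring also needs $f^*(\wt{u_{2j+1}})=0$ in $\hm^{2j+1,j}(\bttzar{(P^n_d)},\zz/2)$, not just regularity; this holds because any decomposable class of bidegree $(j)[2j+1]$ must involve some odd class $u_{2i+1}$ (by a weight--degree count, since products of even classes and elements of $H$ cannot reach a bidegree with cohomological degree exceeding twice the weight), and all odd classes die under $f^*$ for even $n$ by Proposition \ref{Pdu}.
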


Theorem \ref{GdhiqEV} and Proposition \ref{GdhiqOD} permit us to connect the subtle
Stiefel--Whitney classes with the $J$-invariant of $q$ (see \cite{CGQG}).

\begin{prop}
\label{minJU}
Let $q$ be a quadratic form of dimension $n$, and $p=q$, if $n$ is even, and
$p=q\perp\la\ddet_{\pm}(q)\ra$, if $n$ is odd. Then:
$$
\op{min}\{j|\,j\notin J(q)\}=\op{min}\{j|\,u_{2j+1}(p)\neq 0\}.
$$
\end{prop}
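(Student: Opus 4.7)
The plan is to derive the identity directly from the poly-binary motivic decompositions of $M(G_d(q))$ given by Theorem \ref{GdhiqEV} (for even $n=2d$) and Proposition \ref{GdhiqOD} (for odd $n=2d+1$). In both cases one has
$$
M(G_d(q)) \;=\; \bigotimes_{j} \op{Cone}[-1]\bigl(M(\hiq{G_d(q)}) \xrightarrow{u_{2j+1}(p)} M(\hiq{G_d(q)})(j)[2j+1]\bigr),
$$
with $j$ ranging over $\{0,\ldots,d-1\}$ or $\{1,\ldots,d\}$ respectively. Since the $J$-invariant of $q$ is defined in \cite[Definition 5.11]{CGQG} as the combinatorial datum controlling the Chow-motivic decomposition of precisely this Grassmannian, the task reduces to reading both sides of the proposed equality off the same tensor decomposition.

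Set $j_0:=\min\{j\mid u_{2j+1}(p)\neq 0\}$. For the inequality $\min\{j\mid j\notin J(q)\}\geq j_0$, the vanishing $u_{2j+1}(p)=0$ for $j<j_0$ collapses the corresponding factors of the tensor product into genuine direct sums $\tate{}\oplus \tate{}(j)[2j+1]$ in $\dmDVA{\hiq{G_d(q)}}$. Applying the forgetful functor $\dmDVA{\hiq{G_d(q)}}\row\dmkDVA$, which respects tensor products by \cite[Lemma 6.8, Example 6.3]{VoMSS}, and distributing the remaining tensor factors produces $2^{j_0}$ explicit Tate direct summands of $M(G_d(q))$ indexed by $S\subseteq\{0,\ldots,j_0-1\}$, which by the very definition of $J(q)$ forces $\{0,\ldots,j_0-1\}\subseteq J(q)$. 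For the opposite inequality, the flexibility provided by Proposition \ref{flexGd} says that the decomposition is unchanged if $u_{2j_0+1}(p)$ is replaced by itself plus decomposable terms. After modding out by the Tate summands already split off at the lower indices, the $j_0$-th factor retains a non-zero connecting map, so it cannot split as a direct sum in $\dmkDVA$. Consequently the Tate motive at the next level fails to split off $M(G_d(q))$, i.e.\ $j_0\notin J(q)$.

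The main obstacle will be the precise dictionary between "the $j_0$-th cone is non-split modulo the lower pieces" and "$j_0\notin J(q)$" in the sense of \cite{CGQG}. The cleanest route is to extract from the poly-binary decomposition a canonical system of idempotents in $\op{End}(M(G_d(q)))$, verify by downward induction on the indices $j<j_0$ that each contributes the expected Tate summand, and match the resulting splitting to the one characterising $J(q)$. The odd-dimensional case $n=2d+1$ adds a bookkeeping step, since one must pass from $q$ to $p=q\perp\la\ddet_{\pm}(q)\ra$ through the decomposition $\hiq{X_q}=\hiq{X_p}\times\hiq{X_{\la\ddet_{\pm}(q)\ra}}$; but this is already handled in the discussion preceding Proposition \ref{GdhiqOD}, and the substitution rule $\nu^*(u_{2j+1}(p))=u_{2j+1}(q)+u_{2j}(q)\cdot u_1(q)$ given there shows that the non-vanishing condition on $u_{2j+1}(p)$ is precisely what survives for $q$.
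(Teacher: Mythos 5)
Your starting point --- the poly-binary decompositions of Theorem \ref{GdhiqEV} and Proposition \ref{GdhiqOD} --- is the same as the paper's, but the bridge from that decomposition to the $J$-invariant is where your argument has a genuine gap. You write that $J(q)$ is ``defined as the combinatorial datum controlling the Chow-motivic decomposition of this Grassmannian'' and that membership of $j$ in $J(q)$ follows ``by the very definition'' once a summand splits off. That is not the definition: in \cite[Definition 5.11]{CGQG}, $J(q)$ is the set of $j$ for which the specific elementary class $z_j\in\op{CH}^j(G_d(q)|_{\kbar})/2$ is rational. Translating ``the $j$-th binary factor of the tensor decomposition splits (modulo the lower ones)'' into ``$z_j$ is rational'' is exactly the content of the later, and much harder, Proposition \ref{zf}; it requires identifying the canonical projection $M(G_d(q_n))\row T(l)[2l]$ coming from the decomposition with the class $z_l$ (Lemma \ref{z-l}) and then invoking \cite[Main Theorem 5.8]{CGQG}. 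Your proposed fix --- ``extract a canonical system of idempotents \dots\ and match the resulting splitting to the one characterising $J(q)$'' --- is precisely the missing step, not a routine verification. There are also smaller inaccuracies that would need repair: when $u_{2j+1}(p)=0$ the corresponding factor is $M(\hiq{G_d(q)})\oplus M(\hiq{G_d(q)})(j)[2j]$ (not $\tate{}\oplus\tate{}(j)[2j+1]$, and not a Tate summand --- it only becomes Tate over $\kbar$); and ``retains a non-zero connecting map, so it cannot split'' is not automatic in $\dmkDVA$ --- ruling out some other splitting is what the $\Hom$-computation (\ref{ffipsi}) accomplishes in the paper.

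The paper avoids all of this for the statement at hand by a shortcut: the Main Theorem of \cite{CGQG} identifies $\op{min}\{j\,|\,j\notin J(q)\}$ with the minimal codimension of a non-rational class in $\op{CH}^*(G_d(q)|_{\kbar})/2$, and that number is then read off the decomposition by a surjectivity count on Chow groups, with no need to know which particular basis element fails to be rational. If you want to keep your route, you must either import that form of the Main Theorem, or carry out the $z_l$-identification of Lemma \ref{z-l}; as written, the equivalence you rely on is asserted exactly where it needs to be proved.
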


\begin{proof}
By the Main Theorem of \cite{CGQG}, $\op{min}\{j|\,j\notin J(q)\}$ is equal to the minimal codimension
of non-rational class in $\op{CH}^*(G_d(q)|_{\kbar})/2$. By Theorem \ref{GdhiqEV} (Proposition \ref{GdhiqOD}, respectively), this number is also equal to
$\op{min}\{j|\,u_{2j+1}(p)\neq 0\}$.
\end{proof}

But there are much more precise statements.
In $\dmDVA{\Field}$ consider objects:
$$
C_{2l+1}:=\op{Cone}[-1]\left(\hiq{X_q}\stackrel{u_{2l+1}(q)}{\lrow}\hiq{X_q}(l)[2l+1]\right).
$$
Let $N_{j-1}:=\otimes_{0\leq l<j}C_{2l+1}$, and $f_j$ be the composition
$N_{j-1}\row\hiq{X_q}\stackrel{u_{2j+1}(q)}{\lrow}\hiq{X_q}(j)[2j+1]$.

\begin{prop}
\label{zf}
Let $q$ be an $n=2d$-dimensional quadratic form, and $0\leq j< d$.
Then the following conditions are equivalent:
\begin{itemize}
\item[$(1)$ ] $j\in J(q)$;
\item[$(2)$ ] The map $f_j:N_{j-1}\row\hiq{X_q}(j)[2j+1]$ is zero.
\end{itemize}
\end{prop}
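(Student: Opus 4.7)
The plan is to recast the vanishing of $f_j$ as the splitting of a distinguished triangle in $\dmkDVA$, and then to match this splitting, via Theorem \ref{GdhiqEV}, with the Chow-theoretic definition of $J(q)$ from \cite{CGQG}.

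First, I would tensor the defining triangle for $C_{2j+1}$,
\[
C_{2j+1} \lrow M(\hiq{X_q}) \xrightarrow{u_{2j+1}(q)} M(\hiq{X_q})(j)[2j+1] \lrow C_{2j+1}[1],
\]
with $N_{j-1}$ over $k$. Because the diagonal $M(\hiq{X_q}) \to M(\hiq{X_q}) \otimes M(\hiq{X_q})$ is an isomorphism (\cite[Lemma 6.8, Example 6.3]{VoMSS}, as used in the proof of Theorem \ref{XhiqX}), the two middle terms collapse to $N_{j-1}$ and $N_{j-1}(j)[2j+1]$, while the leftmost term becomes $N_j$. Tracking the middle arrow through these identifications recovers $f_j$, so one obtains a distinguished triangle
\[
N_j \lrow N_{j-1} \xrightarrow{f_j} N_{j-1}(j)[2j+1] \lrow N_j[1]
\]
in $\dmkDVA$. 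Rotating and invoking the standard splitting criterion for triangulated categories, one gets $f_j = 0 \LRw N_j \simeq N_{j-1} \oplus N_{j-1}(j)[2j]$.

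Next I would connect this splitting to $J(q)$. By Theorem \ref{GdhiqEV}, $M(G_d(q)) = N_{d-1}$, so the tower $N_0, N_1, \ldots, N_{d-1}$ provides an iterated binary-extension presentation of $M(G_d(q))$ relative to $M(\hiq{X_q})$. According to the main theorem of \cite{CGQG}, $j \in J(q)$ is equivalent to the rationality of a specific codimension-$j$ generator of $\Ch^*(G_d(q)|_{\kbar})/2$, which translates into the presence of a rationally defined Tate-twisted summand $M(\hiq{X_q})(j)[2j]$ splitting off at the $j$-th stage of the tower. By the first step this splitting is precisely $f_j = 0$. Induction on $j$, starting from the base case (where $f_0$ is essentially $u_1(q)$ and the equivalence reduces to Proposition \ref{minJU}), then matches the two conditions level by level.

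The main obstacle is the middle connection: identifying the specific codimension-$j$ Chow generator used in the CGQG definition of $J(q)$ with the rational Tate summand whose appearance is controlled by $f_j$. One must trace, through Proposition \ref{PdOn} and Theorem \ref{GdhiqEV}, how the subtle classes $u_{2j+1}(q)$ encode the generators of $\Ch^*(G_d(q)|_{\kbar})/2$, and verify that the rationality of the $j$-th such generator is equivalent to the splittability of the triangle at level $j$. Once this matching is carried out, Proposition \ref{minJU} re-emerges as the ``first non-vanishing'' specialisation of the result.
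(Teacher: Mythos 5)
Your outline points in the right general direction---the triangle obtained by tensoring the defining triangle of $C_{2j+1}$ with $N_{j-1}$ is indeed the engine of the paper's argument, and the final appeal is to \cite[Main Theorem 5.8]{CGQG}---but it has two genuine gaps. The first is local: the middle arrow of your tensored triangle is $\mathrm{id}_{N_{j-1}}\otimes u_{2j+1}(q)\colon N_{j-1}\row N_{j-1}(j)[2j+1]$, i.e.\ multiplication by $u_{2j+1}(q)$ on the module $N_{j-1}$, whereas $f_j$ has target $\hiq{X_q}(j)[2j+1]$. Composing with the projection $N_{j-1}(j)[2j+1]\row\hiq{X_q}(j)[2j+1]$ shows that the splitting of your triangle implies $f_j=0$, but the converse is not formal, and it is $f_j$ that appears in the statement. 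The paper sidesteps this by applying $\Hom(-,T(j)[2j])$ to that triangle and extracting the exact sequence (\ref{ffipsi}), in which the image of $1\in\zz/2=\Hom(N,T)$ under the connecting map $\psi$ is literally $f_j$; hence $f_j=0$ is equivalent to the surjectivity of $\ffi\colon\Hom(M,T(j)[2j])\row\zz/2$, with no splitting assertion required.

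The second gap is the decisive one: the step you label ``the main obstacle'' is the actual content of the proof, not a verification to be deferred. To convert ``$\ffi$ is surjective'' into ``$j\in J(q)$'' the paper must (a) identify $\Hom(M,T(j)[2j])$ with $\op{CH}^j(G_d(q))/2$ using the full embedding \cite[Lemma 6.7]{VoMSS}, and (b) show that the canonical lifting $\rho_l$ of the projection $M(\bttzar{(P^n_d)}\row\bzar{O(n)})\row T$ to $\hat{C}_{2l+1}$, restricted to the zero-th simplicial component (and then to $\Spec\Field(X_q)$), is the elementary class $z_l$ of \cite[Proposition 2.4]{CGQG}; this is Lemma \ref{z-l}, proved by a separate comparison with $\bzar{O(2l)}$ where $\hat{C}_{2l+1}$ splits. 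Only then does surjectivity of $\ffi$ become the existence of a class in $\op{CH}^j(G_d(q))/2$ whose restriction to $\op{CH}^*(G_d(q)|_{\Field(X_q)})/2$ has non-zero $z_j$-coordinate, which is the rationality of $z_j$, i.e.\ $j\in J(q)$. None of this matching is supplied by your outline, and without it the equivalence is asserted rather than proved. Finally, the induction on $j$ is unnecessary: once these identifications are in place each $j$ is handled directly, and Proposition \ref{minJU} is a corollary of the result rather than an input to it.
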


\begin{proof}
In the category $\dmDVA{\bzar{O(n)}}$ of motives over $\bzar{O(n)}$ consider objects:
$\hat{C}_{2l+1}:=\op{Cone}[-1]\left(\tate{}\stackrel{u_{2l+1}}{\lrow}\tate{}(l)[2l+1]\right)$.
Then the natural map $M(\bttzar{(P^n_d)}\row\bzar{O(n)})\row T$ can be lifted to the map
$$
\rho_l:M(\bttzar{(P^n_d)}\row\bzar{O(n)})\row\hat{C}_{2l+1}.
$$
This lifting is defined up to the choice
of element of
$$
\hm^{2l,l}(\bzar{(P^n_d)},\zz/2)=H[u_2,u_4,\ldots,u_{2d}]_{(l)[2l]}=
\zz/2[u_2,u_4,\ldots,u_{2d}]_{(l)[2l]}.
$$
The composition:
$$
M(\bttzar{(P^n_d)}\row\bzar{O(n)})\stackrel{\Delta_d}{\lrow}
M(\bttzar{(P^n_d)}\row\bzar{O(n)})^{\otimes d}\stackrel{\otimes\rho_l}{\lrow}
\otimes_{0\leq l<d}\hat{C}_{2l+1}
$$
is a choice of isomorphism of Proposition \ref{PdOn}.
Let $(\bzar{O(n)})_0=\op{Spec}(\Field)=\bullet$ be the $0$-th graded
component of the simplicial scheme $\bzar{O(n)}$. Then $(\hat{C}_{2l+1})_0$ uniquely splits as
$T\oplus T(l)[2l]$, since the Subtle Stiefel--Whitney classes are trivial when restricted to
$(\bzar{O(n)})_0$, and there are no maps between the Tate-motives involved
(both by degree consideration, for example). At the same time,
$(M(\bttzar{(P^n_d)}\row\bzar{O(n)}))_0$ is the motive $M(G_d(q_n))$ of the split (highest) Grassmannian
in $\dmDVA{\Field}$. Thus, we get the canonical map
$\phi_l: M(G_d(q_n))\row T(l)[2l]$ giving the class in $\op{CH}^l(G_d(q_n))/2$.

The Chow ring of the split quadratic Grassmannian is generated by the special
"elementary classes" $z_l$ - see \cite[Proposition 2.4]{CGQG}, or \cite[Section 2]{u}.

\begin{lem}
\label{z-l}
We have: $\phi_l=z_l$.
\end{lem}

\begin{proof}
We have a commutative diagram in $\Spc$:
$$
\xymatrix @-0.7pc{
\bttzar{P^{2l}_l} \ar @{->}[r] \ar @{->}[d] & \bzar{O(2l)} \ar @{->}[d]^(0.5){f} &
\bullet \ar @{->}[l]_(0.3){g} \ar @{->}[ld]^(0.5){h}\\
\bttzar{P^{n}_d} \ar @{->}[r] & \bzar{O(n)},
}
$$
which induces the map:
$M(\bttzar{P^{2l}_l}\row\bzar{O(2l)})\lrow f^*M(\bttzar{P^{n}_d}\row\bzar{O(n)})$.
If we apply $g^*$ to it we will get the natural map $M(G_l(q_{2l}))\stackrel{p}{\lrow} M(G_d(q_n))$.
We have a natural identification $f^*(\hat{C}_{2l+1})=\hat{C}_{2l+1}$. Thus, our lifting
$f^*M(\bttzar{P^{n}_d}\row\bzar{O(n)})\row f^*(\hat{C}_{2l+1})$ will restrict to
the lifting $M(\bttzar{P^{2l}_l}\row\bzar{O(2l)})\row\hat{C}_{2l+1}$. But $\hat{C}_{2l+1}$ in
$\dmDVA{\bzar{O(2l)}}$ is split since $f^*u_{2l+1}=0$. Thus, the projection to $T(l)[2l]$ is
defined already on the level of $M(\bttzar{P^{2l}_l}\row\bzar{O(2l)})$ and so is a polynomial
in $u_{2i},\,1\leq i\leq l$ with $\zz/2$-coefficients. But all these classes vanish under $g^*$.
Hence, $p^*(\phi_l)=0$, and so $\phi_l=z_l\in\op{CH}^l(G_d(q_n))/2$ by
\cite[Proposition 2.4(3)]{CGQG}.
\end{proof}

Apply the motivic restriction
$\alpha_X^*: \dmDVA{\bzar{O(n)}}\to\dmDVA{\hiq{X_q}}$,
corresponding to the map
$\alpha_X: \hiq{X_q}\to\bzar{O(n)}$.
Notice that this map respects tensor products.
Denote the image of $M(\bttzar{(P^n_d)}\row\bzar{O(n)})$ as $M$, and
the image of $\hat{C}_{2l+1}$ as $C_{2l+1}$.
Consider $N:=\otimes_{0\leq l<j}C_{2l+1}$ and $N':=\otimes_{j<l<d}C_{2l+1}$.
Then $N'$ is an extension of $T$ and $T(r)[*]$, where $r>j$. Since
$M=N\otimes C_{2j+1}\otimes N'$, we get an exact triangle:
$R\row M\row N\otimes C_{2j+1}\row R[1]$, where $R$ is an extension of
$T(r)[*]$ with $r>j$. In particular, $\Hom(M,T(j)[2j])=\Hom(N\otimes C_{2j+1},T(j)[2j])$, and
we have an exact sequence:
$$
\Hom(M,T(j)[2j])\row \Hom(N(j)[2j],T(j)[2j])\row \Hom(N,T(j)[2j+1]).
$$
Identifying $\Hom(N(j)[2j],T(j)[2j])$ with $\Hom(T(j)[2j],T(j)[2j])=\zz/2$ we get an exact sequence:
\begin{equation}
\label{ffipsi}
Hom(M,T(j)[2j])\stackrel{\ffi}{\row}\zz/2\stackrel{\psi}{\row} Hom(N,T(j)[2j+1]),
\end{equation}
where $\psi$ sends $1\in\zz/2$ to the composition $N\row T\stackrel{u_{2j+1}(q)}{\lrow}T(j)[2j+1]$,
and
$$
\Hom(M,T(j)[2j])=\op{CH}^j(G_d(q))/2
$$ (since the restriction of $M$ to $\dmDVA{\Field}$
is $M(G_d(q))$, and the restriction functor is a full embedding by \cite[Lemma 6.7]{VoMSS}).

From (\ref{hiXBOnDiagr}) we have the following commutative diagram:
$$
\xymatrix @-0.7pc{
X_q \ar @{->}[d] & \Spec\Field(X_q) \ar @{->}[l] \ar @{->}[r] & \bullet \ar @{->}[d] \\
X_q\times\ezar{O(n)} \ar @{->}[rr]  \ar @{->}[d] & & \ezar{O(n)} \ar @{->}[d]\\
\hiq{X_q} \ar @{->}[rr] & & \bzar{O(n)}.
}
$$

Let $\ov{C}_{2l+1}$ be the restriction of $C_{2l+1}$ to the category $\dmDVA{\Spec\Field(X_q)}$.
It follows from our diagram and Lemma \ref{z-l} that the natural lifting
$M_{\Field(X_q)}\row T(l)[2l]$ of the projection to $\ov{C}_{2l+1}$ is given by
$z_l\in\op{CH}^l(G_d(q)|_{\Field(X_q)})/2$. In particular, the map $\ffi: Hom(M,T(j)[2j])\row\zz/2$
will be surjective if and only if in $\op{CH}^j(G_d(q))/2$ there is an element whose restriction to
$\op{CH}^*(G_d(q)|_{\Field(X_q)})/2=\Lambda_{\zz/2}(z_0,\ldots,z_{d-1})$ (additive isomorphism -
see \cite[Proposition 2.4]{CGQG}) has
a non-zero $z_j$-coordinate. By \cite[Main Theorem 5.8]{CGQG} this is equivalent to: $z_j$ is defined
over $\Field$, or in other words, $j\in J(q)$.
It follows from (\ref{ffipsi}) that this condition is equivalent to the fact that the composition
$N\row\hiq{X_q}\stackrel{u_{2j+1}(q)}{\lrow}\hiq{X_q}(j)[2j+1]$ is zero.
\end{proof}

We immediately obtain:
%%%%%%%%%%%%%%%%%%%%%%%%%%%%%%%%%%%%%%%%%%%%%%%%%%%%%%%%%%%%%%%%%%%

\begin{cor}
\label{u->J}
Let $q$ be an $n$-dimensional quadratic form, and $p=q$, for even $n$, and $p=q\perp\la\ddet_{\pm}(q)\ra$,
for odd $n$. Then
$$
u_{2j+1}(p)\in (u_{2l+1}(p)\,|\,0\leq l<j)\cdot\hm^{*,*'}(\hiq{X_p},\zz/2)\hspace{3mm}\Rightarrow
\hspace{3mm} j\in J(q).
$$
\end{cor}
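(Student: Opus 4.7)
The corollary is essentially the ``if'' half of Proposition~\ref{zf}, together with its odd-dimensional counterpart (which follows by the same argument with Proposition~\ref{GdhiqOD} replacing Theorem~\ref{GdhiqEV}). The strategy is to translate the cohomological hypothesis into a morphism identity in $\dmDVA{\Field}$, substitute it into the map $f_j$ from Proposition~\ref{zf}, and verify that the result vanishes termwise; then Proposition~\ref{zf} does the rest.

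First, because the diagonal of $\hiq{X_p}$ is an isomorphism (\cite[Lemma~6.8, Example~6.3]{VoMSS}), each class in $\hm^{*,*'}(\hiq{X_p},\zz/2)$ corresponds canonically to a morphism $\hiq{X_p}\row\hiq{X_p}(*')[*]$ in $\dmDVA{\Field}$, and cup product of classes translates into composition of morphisms. The hypothesis $u_{2j+1}(p)=\sum_{l<j}a_l\cdot u_{2l+1}(p)$ thus gives a factorization
\[
u_{2j+1}(p)=\sum_{l<j}\widetilde{a}_l\circ u_{2l+1}(p)\colon\hiq{X_p}\row\hiq{X_p}(j)[2j+1],
\]
where $\widetilde{a}_l\colon\hiq{X_p}(l)[2l+1]\row\hiq{X_p}(j)[2j+1]$ is the appropriate twist of the morphism attached to $a_l$.

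Second, recall that $f_j=u_{2j+1}(p)\circ(N_{j-1}\row\hiq{X_p})$, so after substitution it splits as a sum over $l<j$ of compositions
$N_{j-1}\row\hiq{X_p}\stackrel{u_{2l+1}(p)}{\lrow}\hiq{X_p}(l)[2l+1]\stackrel{\widetilde{a}_l}{\lrow}\hiq{X_p}(j)[2j+1]$.
It suffices to show that the first two arrows compose to zero for every $l<j$. This follows from the defining triangle $C_{2l+1}\stackrel{\alpha_l}{\row}\hiq{X_p}\stackrel{u_{2l+1}(p)}{\lrow}\hiq{X_p}(l)[2l+1]$ (so $u_{2l+1}(p)\circ\alpha_l=0$), combined with the decomposition $N_{j-1}=C_{2l+1}\otimes\bigl(\bigotimes_{k\neq l}C_{2k+1}\bigr)$ and the fact that the structural map $N_{j-1}\row\hiq{X_p}$ is the tensor product of the $\alpha_k$'s followed by the folding isomorphism $\hiq{X_p}^{\otimes j}\simeq\hiq{X_p}$. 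Functoriality of the tensor product then factors the composite through $u_{2l+1}(p)\circ\alpha_l=0$, giving the claim.

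Hence $f_j=0$, and Proposition~\ref{zf} (for even $n$) or its odd-$n$ analogue yields $j\in J(q)$. The main point requiring care is the odd-dimensional case, where one must check that the proof of Proposition~\ref{zf} transfers verbatim after substituting Proposition~\ref{GdhiqOD} for Theorem~\ref{GdhiqEV} and $u_{2j+1}(p)$ for $u_{2j+1}(q)$; this in turn rests on the fact that the Chow-theoretic description of $J(q)$ via rationality of the elementary classes $z_l\in\op{CH}^l(G_d(q)|_{\ov{\Field}})/2$ from \cite{CGQG} applies to the highest Grassmannian of $q$ regardless of the parity of $\dim q$. The core motivic argument (steps one through three) is otherwise purely formal.
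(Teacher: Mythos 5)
Your argument for even $n$ is exactly the paper's: the corollary is the implication $(2)\Rightarrow(1)$ of Proposition~\ref{zf}, and the only thing to check is that $f_j=0$, which you deduce (correctly, and in more detail than the paper bothers to write) from the fact that the structural projection $N_{j-1}\row\hiq{X_q}$ factors through each $C_{2l+1}$, so that every summand of $f_j$ passes through the zero composite $C_{2l+1}\row\hiq{X_q}\stackrel{u_{2l+1}}{\row}\hiq{X_q}(l)[2l+1]$. Where you diverge is the odd-dimensional case. The paper disposes of it in one line: since $J(q)=J(p)\setminus\{0\}$ for odd $n$ (by \cite[Definition 5.11]{CGQG}) and $p$ is even-dimensional, one simply applies the even case to $p$ and reads off $j\in J(q)$; no odd analogue of Proposition~\ref{zf} is ever needed. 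You instead propose to re-run the entire proof of Proposition~\ref{zf} with Proposition~\ref{GdhiqOD} in place of Theorem~\ref{GdhiqEV}, and you leave the verification as a claim that it ``transfers verbatim.'' That is the soft spot of your write-up: the proof of Proposition~\ref{zf} leans on the parabolic $P^n_d$, the splitting of $X\row G_d$ over every point (which the paper explicitly warns fails for odd $n$ in the proof of Theorem~\ref{GdhiqEV}), and the $z_l$-rationality criterion from \cite{CGQG}, so the transfer is not literally verbatim and would require its own checking. None of this is needed: the reduction via $J(q)=J(p)\setminus\{0\}$ makes the odd case an immediate corollary of the even one, and you should use it.
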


\begin{proof}
Since, for odd $n$, $J(q)=J(p)\backslash\{0\}$ - see \cite[Definition 5.11]{CGQG}, we can assume that $n=2d$ is even, and $p=q$.
Our result follows from Proposition \ref{zf} taking into account that the projection
$N_{j-1}\row\hiq{X_q}$ factors through
$$
C_{2l+1}=\op{Cone}[-1]\left(\hiq{X_q}\stackrel{u_{2l+1}(q)}{\lrow}\hiq{X_q}(l)[2l+1]\right),
$$
for each $0\leq l<j$.
\end{proof}

%%%%%%%%%%%%%%%%%%%%%%%%%%%%%%%%%%%%%%%%%%%%%%%%%%%%%%%%%%%%%%%%%%%%%%%%%%%%%%%%%%

\begin{que}
\label{Jcond}
Are the following conditions equivalent?
\begin{itemize}
\item[$(1)$ ] $j\in J(q)$;
\item[$(2)$ ] $u_{2j+1}(p)=f(u_1(p),\ldots,u_{2j}(p))$, for some $f\in H[u_1,\ldots,u_{2j}]$;
\item[$(3)$ ] $u_{2j+1}(p)\in (u_{2i+1}(p)\,|\,0\leq i<j)\cdot\hm^{*,*'}(\hiq{X_p},\zz/2)$.
\end{itemize}
\end{que}

%%%%%%%%%%%%%%%%%%%%%%%%%%%%%%%%%%%%%%%%%%%%%%%%%%%%%%%%%%%%%%%%%%%%%%%%%%%%%%%%%%%%%%%%%%%

\begin{rem}
\label{nontrivJu}
Note, that the condition $j\in J(q)$ is not equivalent to $u_{2j+1}(p)=0$, even when $q\in I^2$.
Consider $q=\lva a,b\rva\cdot\la 1,c,d\ra$, where $a,b,c,d$ are "generic".
Then it follows from the computations of the Example \ref{<<a>>} that
$u_{11}(q)=\mu_{\{a,b\}}^3\cdot\{c,d\}\neq 0\in\hm^{*,*'}(\hiq{X_q},\zz/2)$
(by Proposition \ref{Hmalpha}, under the identification of the $6$-th diagonal in
$\hm^{*,*'}(\hiq{\{a,b\}},\zz/2)$ with the ideal in $K^M_*(k)/2$, this element corresponds
to $\{a,b,c,d\}\neq 0$). At the same time, $J(q)=\{0,\ldots,5\}\backslash 1$ contains $5$.
But $u_{11}(q)=u_3(q)\cdot u_8(q)$.
\end{rem}

%%%%%%%%%%%%%%%%%%%%%%%%%%%%%%%%%%%%%%%%%%%%%%%%%%%%%%%%%%%%%%%%%%%%%%%%%%%%%%%%%

We can compute the map $\ax{q}^*$ completely in the case of a Pfister form due
to the fact that it is the rare case where the motivic cohomology of $\hiq{X_q}$ is known.
The following computations were performed in the original version of \cite{OVV}, and
later by N.Yagita in \cite[Theorem 5.8]{Yap}.

\begin{thm}
\label{Hmalpha}
Let $\alpha=\{a_1,\ldots,a_n\}\in K^M_*(k)/2$, and $\hiq{\alpha}=\hiq{Q_{\alpha}}$, where
$Q_{\alpha}$ is a Pfister quadric corresponding to $\alpha$. Then the $\leq 0$ diagonal part
of $\hm^{*,*'}(\hiq{\alpha},\zz/2)$ is identified with $\hm^{*,*'}(\op{Spec}(k),\zz/2)$ by
the restriction $\hiq{\alpha}\row\op{Spec}(k)$. The $>0$ diagonal part of
$\hm^{*,*'}(\hiq{\alpha},\zz/2)$ as a $K^M_*(k)/2$-module
is isomorphic to
$$
\zz/2[\mu]\otimes\Lambda(Q_0,\ldots,Q_{n-2})(\gamma)\otimes
L,
$$
where $\Lambda$ is the external algebra (over $\zz/2$), $Q_i$
is the $i$-th Milnor operation (of degree $(2^i-1)[2^{i+1}-1]$),
$\gamma\in\hm^{n,n-1}(\hiq{\alpha},\zz/2)$ is the unique element such that
$\tau\cdot\gamma=\alpha$,
$\mu=Q_{n-2}\circ\ldots\circ Q_0(\gamma)\in\hm^{2^n-1,2^{n-1}-1}(\hiq{\alpha},\zz/2)$, and
multiplication by $\tau$ identifies the $1$-st diagonal $\gamma\otimes L$ with
$\alpha\cdot K^M_*(k)/2=\kker(K^M_*(k)/2\row K^M_*(k(Q_{\alpha}))/2)$.
Moreover, for any $Q_I\in \Lambda(Q_0,\ldots,Q_{n-2})$, $Q_{n-1}(Q_I(\gamma))=Q_I(\gamma)\cdot\mu$.
\end{thm}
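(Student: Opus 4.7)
The plan is to compute $\hm^{*,*'}(\hiq{\alpha},\zz/2)$ diagonal-by-diagonal, where the $d$-th diagonal consists of classes with $p-q=d$. The key inputs will be Voevodsky's Milnor conjecture theorem (giving $\hm^{*,*'}(\Spec k,\zz/2)=K^M_*(k)/2[\tau]$ plus Beilinson--Lichtenbaum vanishing below the diagonal), the Orlov--Vishik--Voevodsky identification $\kker(K^M_*(k)/2\row K^M_*(k(Q_\alpha))/2)=\alpha\cdot K^M_*(k)/2$, and the Rost-motive decomposition of $M(Q_\alpha)$.

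For the low diagonals, the projection $\pi:\hiq{\alpha}\row\Spec k$ is a monomorphism in $\hH$ (analogous to \ref{EXIsSubPointinsHSSS}), so $\pi^*$ is injective. Surjectivity on the $\leq 0$ diagonals follows by comparing with the Beilinson--Lichtenbaum picture over $k(Q_\alpha)$, where $\hiq{\alpha}$ becomes trivial. For the first diagonal, I use multiplication by $\tau$: its image in the $0$-th diagonal lies in $\kker(\text{res}_{k(Q_\alpha)})=\alpha\cdot K^M_*(k)/2$, and picking $\gamma\in\hm^{n,n-1}(\hiq{\alpha},\zz/2)$ with $\tau\gamma=\alpha$ gives the generator. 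The first diagonal is then cyclic over $K^M_*(k)/2$ with annihilator equal to $\op{Ann}(\alpha)$, matching the description $\gamma\otimes L$ with $L$ identified by $\tau$-multiplication with $\alpha\cdot K^M_*(k)/2$.

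For the higher diagonals, I apply Milnor operations $Q_i$ ($i=0,\ldots,n-2$) to $\gamma$. These are derivations, commute, square to zero, and shift up the diagonal, yielding $2^{n-1}$ linearly independent classes $Q_I(\gamma)$ for $I\subset\{0,\ldots,n-2\}$. The top class $\mu=Q_{n-2}\circ\cdots\circ Q_0(\gamma)$ should generate a polynomial algebra $\zz/2[\mu]$: non-nilpotence of $\mu$ comes from the Rost-motive computation, where $M_\alpha$ carries nonzero cohomology in every bidegree of the form $k\cdot(2^n-1,2^{n-1}-1)$. To show that $\zz/2[\mu]\otimes\Lambda(Q_0,\ldots,Q_{n-2})(\gamma)\otimes L$ exhausts the positive-diagonal part, I would use that $M(\hiq{\alpha})|_{\kbar}\simeq T$, so over $\kbar$ all higher-diagonal cohomology vanishes; a Galois/Rost-motive descent argument then identifies the remaining contributions with the stated form.

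The main obstacle is establishing the identity $Q_{n-1}(Q_I(\gamma))=Q_I(\gamma)\cdot\mu$. Using that $Q_{n-1}$ is a derivation and $Q_{n-1}^2=0$, this reduces inductively to $Q_{n-1}(\gamma)=\gamma\cdot\mu$. Both sides live in the same bidegree and both should be nonzero, so they must be proportional by a constant in $(\zz/2)^\times=\{1\}$, hence equal; the crux is verifying simultaneous non-vanishing. This identification --- really a statement about the action of the top Milnor operation on the bottom-degree class $\gamma$, encoding the $n$-Pfister nature of $\alpha$ --- requires invoking the Rost-motive structure theorem and Voevodsky's explicit computations on Pfister quadrics. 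Once this key relation is established, the full multiplicative structure propagates by combining it with the derivation property of the $Q_i$.
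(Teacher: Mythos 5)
First, note that the paper does not actually prove Theorem \ref{Hmalpha}: it is quoted as a known computation, attributed to the original version of \cite{OVV} and to Yagita \cite[Theorem 5.8]{Yap}. So you are attempting a result the authors deliberately black-boxed, and your sketch has to be measured against the real proofs in those sources. Your treatment of the $\leq 0$ diagonals and of the first diagonal is essentially correct and standard (Beilinson--Lichtenbaum, the exact sequence of \cite[Prop.~2.3]{OVV}, and the main theorem of \cite{OVV} identifying $\kker(K^M_*(k)/2\row K^M_*(k(Q_\alpha))/2)$ with $\alpha\cdot K^M_*(k)/2$), and your reduction of $Q_{n-1}(Q_I(\gamma))=Q_I(\gamma)\cdot\mu$ to the single identity $Q_{n-1}(\gamma)=\gamma\cdot\mu$, via commutativity of the $Q_i$, the Cartan formula, and $Q_i(\mu)=0$ for $i\leq n-2$, is fine.

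The genuine gaps are all in the positive diagonals. (1) ``Over $\kbar$ all higher-diagonal cohomology vanishes; a Galois/Rost-motive descent argument then identifies the remaining contributions'' is not an argument: motivic cohomology on positive diagonals does not satisfy Galois descent --- the entire point of $\hiq{\alpha}$ is that it is trivial over $\kbar$ yet carries nontrivial positive-diagonal cohomology over $k$ --- so vanishing over $\kbar$ gives no upper bound over $k$. The actual exhaustion argument uses the triangle $\hiqt{\alpha}\row\hiq{\alpha}\row T\row\hiqt{\alpha}[1]$ together with Voevodsky's distinguished triangle expressing $\hiqt{\alpha}$ in terms of twists of the Rost motive, and then the computation of the motivic cohomology of the Rost motive via the Margolis homology of $\Lambda(Q_0,\ldots,Q_{n-1})$; none of this appears in your sketch. (2) Your justification for $\zz/2[\mu]$ being polynomial --- that the Rost motive ``carries nonzero cohomology in every bidegree of the form $k\cdot(2^n-1,2^{n-1}-1)$'' --- is false as stated: the Rost motive is a bounded geometric motive and its cohomology does not repeat in all such multiples. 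The non-nilpotence of $\mu$ comes from the periodicity triangle relating $\hiqt{\alpha}(2^{n-1}-1)[2^n-1]$ to $\hiqt{\alpha}$, i.e.\ from a self-map of $\hiqt{\alpha}$ that is an isomorphism on high diagonals, not from the Rost motive alone. (3) The crux identity $Q_{n-1}(\gamma)=\gamma\cdot\mu$ is not established; moreover your ``both nonzero in the same group, hence equal'' argument needs that group to be one-dimensional, which presupposes the very structure theorem being proved and is therefore circular unless organized as a careful induction. As it stands the proposal is a reasonable roadmap, but the hard core of the theorem is deferred rather than proved.
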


Now it is not difficult to compute the subtle Stiefel--Whitney classes.

\begin{thm}
\label{Pfister}
Let $\alpha=\{a_1,\ldots,a_n\}\in K^M_n(k)/2$ be a non-zero pure symbol, and
$q_{\alpha}=\lva a_1,\ldots,a_n\rva$ be the respective Pfister form. Then
\begin{equation*}
u_i(q_{\alpha})=
\begin{cases}
&Q_{n-2}\circ\ldots\circ\widehat{Q_{r-1}}\circ\ldots\circ Q_0(\gamma_{\alpha}),\,\,\,
\text{if}\,\,\,i=2^n-2^r,\,0\leq r\leq n-1;\\
&0,\,\,\,\text{otherwise}
\end{cases}
\end{equation*}
\end{thm}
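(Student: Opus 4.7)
The plan is to combine three ingredients: the identification $\hiq{X_{q_\alpha}}=\hiq{\alpha}$, the explicit description of $\hm^{*,*'}(\hiq{\alpha},\zz/2)$ from Theorem \ref{Hmalpha}, and a bidegree-plus-induction argument controlled by the Whitney-type formulas of Proposition \ref{Olmn} and Proposition \ref{lambdadejstvie}.

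\textbf{Step 1 (Setup).} First I would identify $\hiq{X_{q_\alpha}}$ with $\hiq{\alpha}=\hiq{Q_\alpha}$ in $\hk$. For any field extension $L/\Field$ the conditions ``$q_\alpha|_L$ is hyperbolic'', ``$\alpha|_L=0$ in $K^M_n(L)/2$'' and ``$Q_\alpha(L)\neq\emptyset$'' are all equivalent (this is standard for Pfister forms). Hence by \ref{HsIsoOfEYToEXSSS} the spaces coincide in $\hk$, and the problem reduces to computing the pull-backs $\ax{q_\alpha}^*(u_i)$ inside the ring described by Theorem \ref{Hmalpha}.

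\textbf{Step 2 (Bidegree constraints).} The class $u_i(q_\alpha)$ lies in $\hm^{i,[i/2]}$, whose diagonal $i-[i/2]=[(i+1)/2]$ is strictly positive for $i\geq 1$, so it belongs to the positive diagonal part $\zz/2[\mu]\otimes\Lambda(Q_0,\ldots,Q_{n-2})(\gamma)\otimes L$. A direct computation gives that the monomial $\mu^k Q_I(\gamma)$ with $I\subseteq\{0,\ldots,n-2\}$ has bidegree
\[
\bigl(k(2^n-1)+\sum_{i\in I}(2^{i+1}-1)+n,\ k(2^{n-1}-1)+\sum_{i\in I}(2^i-1)+n-1\bigr),
\]
and the $L$-coefficients are forced into rigid bidegrees by the identification of $L$ with $\alpha\cdot K^M_*(\Field)/2$. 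Enumerating: the equation ``degree $=i$, weight $=[i/2]$'' has a solution only when $k=0$ and $I=\{0,\ldots,n-2\}\setminus\{r-1\}$ for some $0\leq r\leq n-1$, in which case $i=2^n-2^r$; in all other cases no element of the right bidegree survives, so $u_i(q_\alpha)=0$. This matches the stated list and reduces the theorem to determining the leading coefficient of the remaining candidate.

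\textbf{Step 3 (Pinning down the coefficient).} I would proceed by induction on $n$. The base $n=1$ (with $q_\alpha=\la 1,a_1\ra$) is done directly from Proposition \ref{Olmn} applied to $O(1)\times O(1)\subset O(2)$ and Lemma \ref{gammau1}. For the induction step use the Pfister decomposition $q_\alpha=q_{\alpha'}\perp(-a_n)\cdot q_{\alpha'}$ with $\alpha'=\{a_1,\ldots,a_{n-1}\}$: combining the Whitney-type formula of Proposition \ref{Olmn} for $O(2^{n-1})\times O(2^{n-1})\subset O(2^n)$ with the scaling automorphism $\ffi_{-a_n}$ of Proposition \ref{lambdadejstvie} yields a recursion expressing $u_i(q_\alpha)$ in terms of the known $u_j(q_{\alpha'})$. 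The bidegree sparsity of Step~2 forces massive cancellation in this recursion, and the surviving terms fit exactly the $Q_{n-2}\circ\cdots\circ\widehat{Q_{r-1}}\circ\cdots\circ Q_0(\gamma)$ pattern once one uses the compatibility $Q_{n-1}(Q_I(\gamma))=Q_I(\gamma)\cdot\mu$ from Theorem \ref{Hmalpha} to recognise the appearance of a new Milnor operation $Q_{n-2}$ in the leading symbol.

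\textbf{Main obstacle.} I expect the substantive work to be in Step~3, specifically ruling out stray $L$-correction terms of the form $Q_{I'}(\gamma)\cdot\beta$ with $\beta\in K^M_*(\Field)/2$ annihilating $\alpha$, which \emph{a priori} could live in the correct bidegree $(2^n-2^r,\,2^{n-1}-2^{r-1})$. Two routes are available to close this gap: either push the induction further and verify by residue at the generic point of $Q_\alpha$ (where $\alpha$ becomes trivial and the elements $Q_I(\gamma)$ have a controlled image), or apply the Steenrod operations of Proposition \ref{Stinrodu} to the conjectural formula and check that the abstract formula $Sq^k(u_m)=\sum\binom{m-k}{j}u_{k-j}u_{m+j}$ is compatible with the Milnor-operation description, which uniquely pins down the multiplicative constant to $1$.
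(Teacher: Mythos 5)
Your overall strategy --- induction on $n$ via the decomposition $q_\alpha=q_\beta\perp(-a_n)\cdot q_\beta$ with $\beta=\{a_1,\ldots,a_{n-1}\}$, fed through Proposition \ref{Olmn}, Proposition \ref{lambdadejstvie} and the ring of Theorem \ref{Hmalpha} --- is exactly the paper's, but two load-bearing claims do not hold as stated. First, the bidegree analysis of Step 2 is not quite what you say. In the positive-diagonal part $\zz/2[\mu]\otimes\Lambda(Q_0,\ldots,Q_{n-2})(\gamma)\otimes L$ the monomial $\mu^kQ_I(\gamma)$ is indeed determined uniquely by the diagonal $b-a=k\cdot 2^{n-1}+1+\sum_{i\in I}2^i$, and for $i=2^n-2^r$ the residual $L$-degree is exactly $0$; so in those bidegrees the class is rigid (either $0$ or $Q_{n-2}\circ\cdots\circ\widehat{Q_{r-1}}\circ\cdots\circ Q_0(\gamma_\alpha)$ on the nose), and your ``main obstacle'' about stray $L$-corrections there is vacuous. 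The real failure is at $i=2^n$: the group $\hm^{2^n,2^{n-1}}(\hiq{\alpha},\zz/2)$ contains $\mu\cdot K^M_1(k)/2$, which is nonzero in general, so bidegrees alone do not give $u_{2^n}(q_\alpha)=0$. The same problem already sits in your base case, since $\hm^{2,1}(\hiq{\lva a_1\rva},\zz/2)=\gamma\cdot K^M_1(k)/2$ need not vanish, so $u_2=0$ is not free either.

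Second, the recursion of Step 3 does not express $u_i(q_\alpha)$ in terms of the $u_j(q_\beta)$: Proposition \ref{Olmn} together with diagram (\ref{YXalpha}) computes only the image $f^*(u_i(q_\alpha))$ under the restriction $f:\hiq{\beta}\row\hiq{\alpha}$ (with $f^*(\gamma_\alpha)=\gamma_\beta\cdot\{a_n\}$). To pass back you must know that $f^*$ is injective in the relevant bidegrees, and this is precisely the ingredient the paper extracts from Theorem \ref{Hmalpha}: $f^*$ is injective on all diagonals up to $2^{n-1}$. That injectivity simultaneously disposes of $i=2^n$ (since $f^*(u_{2^n}(q_\alpha))=u_{2^{n-1}}(q_\beta)\cdot u_{2^{n-1}}(-a_n\cdot q_\beta)=0$ by the inductive hypothesis) and pins the coefficient of $Q_{n-2}\circ\cdots\circ\widehat{Q_{r-1}}\circ\cdots\circ Q_0(\gamma_\alpha)$ to $1$. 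Neither of your proposed substitutes closes this gap: in particular the Steenrod-operation route cannot determine the constant, because the identically zero assignment also satisfies the Wu formula of Proposition \ref{Stinrodu}. Once you add the injectivity of $f^*$ on low diagonals, your argument becomes the paper's.
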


\begin{proof}
Induction on $n$.
(base) For $n=1$, we know that $u_1(q_{\{a\}})=\gamma_{\{a\}}$ and $u_2(q_{\{a\}})=0$.

(step) Consider $\beta=\{a_1,\ldots,a_{n-1}\}$, so that $\alpha=\beta\cdot\{a_n\}$.
Then we have the canonical (unique) map
$\hiq{\beta}\stackrel{f}{\row}\hiq{\alpha}$, and it follows from
Theorem \ref{Hmalpha} that $f^*(\gamma_{\alpha})=\gamma_{\beta}\cdot\{a_n\}$.
We have: $q_{\alpha}=q_{\beta}\perp -a_n\cdot q_{\beta}$. We have the respective
embedding $O(2^{n-1})\times O(2^{n-1})\stackrel{j}{\hookrightarrow} O(2^n)$, and
by Proposition \ref{Olmn} and $(\ref{YXalpha})$, we get:
$$
f^*(u_i(q_{\alpha}))=
\sum_{j=0}^iu_j(q_{\beta})\cdot u_{i-j}(-a_n\cdot q_{\beta})\cdot\tau^{[i/2]-[j/2]-[i-j/2]}.
$$
By Proposition \ref{lambdadejstvie} and inductive assumption,
$u_l(-a_n\cdot q_{\beta})=u_l(q_{\beta})$, for $l<2^{n-1}$, while
$u_{2^{n-1}}(-a_n\cdot q_{\beta})=\{a_n\}\cdot u_{2^{n-1}-1}(q_{\beta})$.
This implies that $f^*(u_i(q_{\alpha}))=0$, if $i\neq 2^{n}-2^r$, for $0\leq r\leq n-1$, and
\begin{equation*}
\begin{split}
& f^*(u_{2^{n}-2^r}(q_{\alpha}))= u_{2^{n-1}-2^r}(q_{\beta})\cdot u_{2^{n-1}-1}(q_{\beta})\cdot\{a_n\}=\\
&= Q_{n-3}\circ\ldots\circ\widehat{Q_{r-1}}\circ\ldots\circ Q_0(\gamma_{\beta})\cdot\mu_{\beta}\cdot\{a_n\}
=f^*(Q_{n-2}\circ\ldots\circ\widehat{Q_{r-1}}\circ\ldots\circ Q_0(\gamma_{\alpha})),
\end{split}
\end{equation*}
for $0\leq r\leq n-2$.

For $r=n-1$, $f^*(u_{2^{n-1}}(q_{\alpha}))=u_{2^{n-1}-1}(q_{\beta})\cdot\{a_n\}=
Q_{n-3}\circ\ldots\circ Q_0(\gamma_{\beta})\cdot\{a_n\}=f^*(Q_{n-3}\circ\ldots\circ Q_0(\gamma_{\alpha}))$.
Since $f^*$ is injective on all the diagonals up to $2^{n-1}$
(follows from Theorem \ref{Hmalpha}), we obtain:
$u_i(q_{\alpha})=0$, for $i\neq 2^{n}-2^r$, for $0\leq r\leq n-1$ and
$u_{2^n-2^r}(q_{\alpha})= Q_{n-2}\circ\ldots\circ\widehat{Q_{r-1}}\circ\ldots\circ Q_0(\gamma_{\alpha})$
(recall that $u_i$ lives on the diagonal with number $[i+1/2]$).
\end{proof}

Now we can use {\it subtle Stiefel--Whitney classes} to describe the powers of the fundamental ideal
$I^n$ in $W(k)$.

\begin{thm}
\label{In}
The following conditions are equivalent:
\begin{itemize}
\item[$1)$ ] $q\in I^n$;
\item[$2)$ ] $u_i(q)=0$, for $1\leq i\leq 2^{n-1}-1$;
\item[$3)$ ] $u_i(q)=0$, for $i=2^r,\,0\leq r\leq n-2$.
\end{itemize}
\end{thm}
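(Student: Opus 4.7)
The strategy is to establish the chain $(1)\Rightarrow(2)\Rightarrow(3)\Rightarrow(1)$. The middle implication is immediate: for $n\geq 2$, each $2^r$ with $0\leq r\leq n-2$ satisfies $1\leq 2^r\leq 2^{n-2}\leq 2^{n-1}-1$, so $(2)$ contains $(3)$ as a special case (the degenerate small-$n$ cases are handled separately).

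For $(1)\Rightarrow(2)$, the plan is to combine Theorem \ref{Pfister} with Proposition \ref{Olmn}. Because $\sqrt{-1}\in\Field$, the Witt ring has characteristic two, and Pfister's theorem expresses any $q\in I^n$ in $W(\Field)$ as a sum of $n$-fold Pfister forms. Hyperbolic planes are $\Field$-split when $\sqrt{-1}\in\Field$, hence give trivial torsors and contribute trivially to the subtle classes (this follows from Proposition \ref{Olmn} together with the vanishing of positive-degree $u_i$ at the base-point of $\bzar{O(n)}$). So we may assume $q\cong\perp_j\pi_j$ as a form. This presentation gives a reduction of $X_q$ to a $\prod_jO(\dim\pi_j)$-torsor and a commutative square
\[
\xymatrix{
\prod_j\hiq{X_{\pi_j}}\ar[r]\ar[d] & \bzar{\prod_jO(\dim\pi_j)}\ar[d]\\
\hiq{X_q}\ar[r]^{\alpha_{X_q}} & \bzar{O(\dim q)}.
}
\]
Iterated use of Proposition \ref{Olmn} expresses the pullback of $\alpha_{X_q}^{*}(u_i)$ to $\prod_j\hiq{X_{\pi_j}}$ as $\sum u_{i_1}(\pi_1)\cdots u_{i_k}(\pi_k)\cdot\tau^{*}$ with $\sum i_j=i$. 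By Theorem \ref{Pfister}, $u_{i_j}(\pi_j)=0$ unless $i_j=0$ or $i_j\geq 2^{n-1}$, so every nonzero term has $i\geq 2^{n-1}$. Hence the pullback vanishes for $1\leq i<2^{n-1}$.

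For $(3)\Rightarrow(1)$, I would induct on $n$. The base $n=2$ reduces to $u_1(q)=0\Rightarrow w_1(q)=\tau\cdot u_1(q)=0$, i.e. $\disc(q)=1$, giving $q\in I^2$. In the inductive step, vanishing of $u_{2^r}(q)$ for $0\leq r\leq n-3$ places $q$ in $I^{n-1}$ by the inductive hypothesis, and the additional vanishing of $u_{2^{n-2}}(q)$ must force $e_{n-1}(q)=0\in K^M_{n-1}(\Field)/2$, so $q\in I^n$. The identification of $u_{2^{n-2}}(q)$ with $e_{n-1}(q)$ for $q\in I^{n-1}$ is supplied by Theorem \ref{en} (which is the main bridge between the subtle classes and the Milnor--Orlov--Vishik--Voevodsky invariants).

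The two main obstacles I anticipate: in $(1)\Rightarrow(2)$, the Whitney argument only gives vanishing of $u_i(q)$ after pullback to $\prod_j\hiq{X_{\pi_j}}$, so some care is needed to promote this to vanishing in $\hm^{*,*'}(\hiq{X_q},\zz/2)$ itself; I expect this to follow by comparing the low-bidegree motivic cohomology of $\hiq{X_q}$ and $\prod_j\hiq{X_{\pi_j}}$ using the Milnor $K$-theory description available in the relevant bidegrees, but it deserves a careful check. In $(3)\Rightarrow(1)$, the real content is packaged in Theorem \ref{en}: the subtle technology of the $u_i$'s must be reconciled with the classical $e_r$-invariants, and this is where the Pfister computation of Theorem \ref{Pfister} does the essential work.
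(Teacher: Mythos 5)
Your overall architecture ($(1)\Rightarrow(2)\Rightarrow(3)\Rightarrow(1)$, with $(2)\Rightarrow(3)$ trivial) matches the paper's, but both nontrivial implications have genuine gaps. In $(1)\Rightarrow(2)$ the gap is exactly the one you flag at the end, and it is not a routine check: $u_i(q)$ lives in bidegree $b=i$, $a=[i/2]$, i.e. on or just above the line $b=2a$, far outside the Beilinson--Lichtenbaum range $b\leq a$, so there is no ``Milnor $K$-theory description'' of $\hm^{i,[i/2]}(\hiq{X_q},\zz/2)$ to compare with, and the pullback to $\hm^{i,[i/2]}(\prod_j\hiq{X_{\pi_j}},\zz/2)$ is not obviously injective (the arrow $\prod_j\hiq{X_{\pi_j}}\row\hiq{X_q}$ is far from an isomorphism when the $\pi_j$ do not all split wherever $q$ does). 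Closing this gap is essentially the entire content of the paper's proof of $(1)\Rightarrow(2)$: Proposition \ref{cohIn} shows that the whole group $\hm^{b,a}(\hiq{X_{q_1}}\times\dots\times\hiq{X_{q_s}},\zz/2)$ vanishes in the relevant range (so the Whitney-sum computation is not even needed), and its proof is a substantial induction combining Theorem \ref{Hmalpha}, the binary decomposition of the motive of the highest Grassmannian (Theorem \ref{GdhiqEV}), the $J$-invariant description of $I^n$ (Proposition \ref{JIn}), and localization. The ``careful check'' you defer is the theorem.

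In $(3)\Rightarrow(1)$ your inductive step misapplies Theorem \ref{en}: for $q\in I^{n-1}$ that theorem identifies $e_{n-1}(q)$ with a Milnor-operation preimage of $u_{2^{n-1}-1}(q)$, not of $u_{2^{n-2}}(q)$. The identification you actually need, relating $e_{n-1}(q)$ to $u_{2^{n-2}}(q)$, is the level-$(n-1)$ case of Remark \ref{en2^n-1}, which the paper explicitly leaves unproven (``requires a bit more of work''); the two indices $2^{n-1}-1$ and $2^{n-2}$ agree only for $n=2$, i.e.\ only in your base case. (There is also an ordering issue: Theorem \ref{en} comes after Theorem \ref{In} and its proof draws on the vanishing statements for forms in $I^n$, so one would have to check the argument is not circular.) The paper's route is much shorter and avoids all of this: arguing the contrapositive, if $q\notin I^n$ then by the $J$-filtration conjecture \cite[Theorem 4.3]{OVV} there is an extension $L/\Field$ with $(q_L)_{an}$ an $r$-fold Pfister form for some $r<n$, and Theorem \ref{Pfister} gives $u_{2^{r-1}}(q_L)\neq 0$ with $2^{r-1}$ among the indices listed in $(3)$ --- an immediate contradiction requiring neither induction nor Theorem \ref{en}. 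As written, your $(3)\Rightarrow(1)$ does not close.
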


\begin{proof} The implication $(2\row 3)$ is evident.
$(3\row 1)$: Suppose, $q\not\in I^n$. Then by \cite[Theorem 4.3]{OVV} (the $J$-filtration Conjecture),
there exists a field extension $L/k$ such that $(q_L)_{an}$ is an $r$-fold Pfister form, with
$r<n$. By Theorem \ref{Pfister}, $u_{2^{r-1}}(q)|_L\neq 0$ - a contradiction.

$(1\row 2)$: We will show that the respective cohomology group is zero.

%%%%%%%%%%%%%%%%%%%%%%%%%%%%%%%%%%%%%%%%%%%%%%%%%%%%%%%%%%%%%%%%%%%%%%%%%%%%%%%%%%%%%%%%

\begin{prop}
\label{cohIn}
Let $q_1,\ldots,q_s$ be forms from $I^n$. Then
$$
\hm^{b,a}(\hiq{X_{q_1}}\times\ldots\times\hiq{X_{q_s}},\zz/2)=0\hspace{5mm}\text{ for }:
$$
\begin{itemize}
\item[$1)$ ] $\frac{b}{a}>2+\frac{1}{2^{n-1}-1}$; \,\,\,\text{and for}
\item[$2)$ ] $\frac{b+l-n+1}{a+l-n+1}>2+\frac{1}{2^{l}-1}$, where
$n-1\geq l=[log_2(b-a)]$ and $b>a$.
\end{itemize}
\end{prop}

\begin{proof} Let $\car$ denote the set of pairs $(a,b)$ satisfying the
conditions 1), or 2) of Proposition \ref{cohIn} union with
$\{(a,b)|\,b\leq a\}$. Denote as $\dcar$ the set of such $(a,b)$ that $(a,b)\notin\car$, but
$(a,b+1)\in\car$.

We can safely assume that $n>1$. Use increasing induction on $a$. For $a<0$, the groups in question
are, clearly, zero.

%%%%%%%%%%%%%%%%%%%%%%%%%%%%%%%%%%%%%%%%%%%%%%%%%%%%%%%%%%%%%%%%%%%%%%%

\begin{lem}
\label{hiqtildeIn}
Let $p_1,\ldots,p_s\in I^n$, and $q_{\alpha}$ be an $n$-fold Pfister form.
Suppose $(a,b)\in\car$, and Proposition \ref{cohIn} is valid for all $(a',b')\in\car$ with $b'>a'<a$.
Then the natural map
$$
\hm^{b,a}(\hiq{X_{p_1}}\times\ldots\times\hiq{X_{p_s}},\zz/2)\row
\hm^{b,a}(\hiq{X_{p_1}}\times\ldots\times\hiq{X_{p_s}}\times\hiq{\alpha},\zz/2)
$$
is an isomorphism for the given $(a,b)$.
\end{lem}

\begin{proof}
In $\dmkDVA$ we have a distinguished triangle:
$\hiqt{\alpha}\row\hiq{\alpha}\row\zz/2\row\hiqt{\alpha}[1]$.

\begin{lem}
\label{XhiqtildeQ}
Let $Y$ be any smooth variety, and $q_{\alpha}$ be an $n$-fold Pfister form.
Then $\hm^{b,a}(Y\times\hiqt{\alpha},\zz/2)=0$, for $(a,b)\in\car$, and
the map $\hm^{b,a}(Y\times\hiq{\alpha},\zz/2)\twoheadrightarrow\hm^{b,a}(Y\times\hiqt{\alpha},\zz/2)$
is surjective for $(a,b)\in\dcar$. In particular, the map
$$
\hm^{b,a}(Y,\zz/2)\row\hm^{b,a}(Y\times\hiq{\alpha},\zz/2)
$$
is an isomorphism, for $(a,b)\in\car$.
\end{lem}

\begin{proof}
For any field extension $L/k$, $\hm^{b,a}(\hiqt{\alpha}|_L,\zz/2)=0$ for $(a,b)\in\car$,
by Theorem \ref{Hmalpha} (the $b\leq a$ case follows already from the
Beilinson-Lichtenbaum Conjecture proven by V.Voevodsky \cite{VoMil} using
A.Suslin-V.Voevodsky \cite{SuVo}).
Since $\car$ is stable under $(a,b)\mapsto(a-m,b-2m)$, for any $m\geq 0$,
it follows from the localization sequence for motivic cohomology that
$\hm^{b,a}(Y\times\hiqt{\alpha},\zz/2)=0$, for any smooth variety $Y$, for $(a,b)\in\car$.
Theorem \ref{Hmalpha} implies also that, for any field extension $L/k$,
the map $\hm^{b,a}(\hiqt{\alpha},\zz/2)\twoheadrightarrow\hm^{b,a}(\hiqt{\alpha}|_L,\zz/2)$
is surjective, for $(a,b)\in\dcar$. Since the map
$\hm^{b,a}(\hiq{\alpha},\zz/2)\twoheadrightarrow\hm^{b,a}(\hiqt{\alpha},\zz/2)$ is surjective,
for all $(a,b)$, it follows from the localization sequence
(and the fact that $\car$ is stable under: $(a,b)\mapsto(a-2m,b-2m)$) again that the map
$\hm^{b,a}(Y\times\hiq{\alpha},\zz/2)\twoheadrightarrow\hm^{b,a}(Y\times\hiqt{\alpha},\zz/2)$
is surjective for $(a,b)\in\dcar$.
\end{proof}

Let $q$ be one of our forms $p_1,\ldots,p_s$. We know that $\hiq{X_q}=\hiq{G_d(q)}$, where $G_d(q)$ is
the variety of totally isotropic subspaces in $V_q$ of maximal dimension.
By Theorem \ref{GdhiqEV}, $M(G_d(q))$ is an extension of $M(\hiq{G_d(q)})(j)[2j]$,
for some $j$'s.
We have the following description of $I^n$ in terms of the $J$-invariant.

\begin{prop}
\label{JIn}
The following conditions are equivalent:
\begin{itemize}
\item[$1)$ ] $q\in I^n$;
\item[$2)$ ] $\{0,\ldots,2^{n-1}-2\}\subset J(q)$;
\item[$3)$ ] $2^r-1\in J(q)$, for $0\leq r\leq n-2$.
\end{itemize}
\end{prop}

\begin{proof}
$(2\row 3)$ is evident. $(1\row 2)$ follows from \cite[Corollary 3.5]{GPQCG}.
$(3\row 1)$ follows from the J-filtration Conjecture (\cite[Theorem 4.3]{OVV}) and
\cite[Example 3.2]{u}.
\end{proof}

It follows from Proposition \ref{minJU} and Proposition \ref{JIn} that $u_{2j+1}(q)=0$, for
$0\leq j<2^{n-1}-1$. This implies that we have a direct summand $N$ of
$M(G_d(q))$ such that
$N_{\kbar}=\otimes_{j}(\zz/2\oplus\zz/2(j)[2j])$, where $j\geq 2^{n-1}-1$.

Then we have a distinguished triangle:
$P\row N\row M(\hiq{F_q})\row P[1]$ in $\dmkDVA$, where
$P$ is an extension of
$M(\hiq{F_q})(j)[2j]$, for $j\geq 2^{n-1}-1$.
Thus, if we know that\\
$\hm^{b,a}(F_{p_1}\times\ldots\times F_{p_s}\times\hiqt{\alpha},\zz/2)=0$,
and $\hm^{b-2j-1,a-j}(\hiq{F_{p_1}}\times\ldots\times\hiq{F_{p_s}}\times\hiqt{\alpha},\zz/2)=0$,
for all $j\geq 2^{n-1}-1$, then
$\hm^{b,a}(\hiq{F_{p_1}}\times\ldots\times\hiq{F_{p_s}}\times\hiqt{\alpha},\zz/2)=0$.
Hence, it follows by the induction on the degree from Lemma \ref{XhiqtildeQ}
and the fact that $\car$ is stable under $(a,b)\mapsto (a-j,b-2j-1)$, for
$j\geq 2^{n-1}-1$, that
$\hm^{b,a}(\hiq{F_{p_1}}\times\ldots\times\hiq{F_{p_s}}\times\hiqt{\alpha},\zz/2)=0$,
for $(a,b)\in\car$.
By the inductive assumption, $\hm^{b,a}(P,\zz/2)=0$, for our pair $(a,b)\in\car$.
Thus, the map
$\hm^{b,a}(\hiq{F_{p_1}}\times\ldots\times\hiq{F_{p_s}},\zz/2)\hookrightarrow
\hm^{b,a}(F_{p_1}\times\ldots\times F_{p_s},\zz/2)$ is injective, and so,
by Lemma \ref{XhiqtildeQ}, the map
$\hm^{b,a}(\hiq{F_{p_1}}\times\ldots\times\hiq{F_{p_s}},\zz/2)\row
\hm^{b,a}(\hiq{F_{p_1}}\times\ldots\times\hiq{F_{p_s}}\times\hiq{\alpha},\zz/2)$
is an isomorphism. This proves Lemma \ref{hiqtildeIn}.
\end{proof}

We can present each of the forms $p_1,\ldots,p_s$ as sum of $n$-fold Pfister forms.
Let $\pi_{\alpha_1},\ldots,\pi_{\alpha_t}$ be all the Pfister forms involved.
Then it follows from Lemma \ref{hiqtildeIn} that (for the given pair $(a,b)$) the map:
$$
\hm^{b,a}(\hiq{X_{p_1}}\times\ldots\times\hiq{X_{p_s}},\zz/2)\row
\hm^{b,a}(\hiq{X_{p_1}}\times\ldots\times\hiq{X_{p_s}}\times
\hiq{\alpha_1}\times\ldots\times\hiq{\alpha_t},\zz/2)
$$
is an isomorphism. But
$\hiq{X_{p_1}}\times\ldots\times\hiq{X_{p_s}}\times\hiq{\alpha_1}\times\ldots\times\hiq{\alpha_t}=
\hiq{\alpha_1}\times\ldots\times\hiq{\alpha_t}$.
It remains to apply Lemma \ref{hiqtildeIn} again reducing the set $\alpha_1,\ldots,\alpha_t$
to an empty one.
Proposition \ref{cohIn} is proven.
\end{proof}

It follows from Proposition \ref{cohIn} that $u_i(q)$, for $i\leq 2^{n-1}-1$, lives in a zero
group, which proves the implication $(1\row 2)$.
\end{proof}

The above results imply that {\it subtle Stiefel--Whitney classes} do distinguish the
triviality of the torsor.

%%%%%%%%%%%%%%%%%%%%%%%%%%%%%%%%%%%%%%%%%%%%%%%%%%%%%%%%%%%%%%%%%%%%%%%%%%%%%%%%%%%%%%%%%%%%%%%%%%
\begin{cor}
\label{Utriv}
The following conditions are equivalent:
\begin{itemize}
\item[$1)$ ] $q\cong q_n$;
\item[$2)$ ] $u_i(q)=0$, for all $i$;
\item[$3)$ ] $u_{2^r}(q)=0$, for all $r$;
\item[$4)$ ] $u_{2^r-1}(q)=0$, for all $r$.
\end{itemize}
\end{cor}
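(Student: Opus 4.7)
For $(1) \Rightarrow (2)$: if $q \cong q_n$ then the torsor $X_q$ carries the identity of $O(n)$ as a rational point, so by \ref{HsIsoOfEYToEXSSS} one has $\hiq{X_q} \simeq S$ in $\hs$, and the classifying map $\alpha_{X_q}$ factors through the basepoint $S \to \bzar{O(n)}$. Each class $u_i \in \hm^{i,[i/2]}(\bzar{O(n)},\zz/2)$ therefore pulls back to an element of $\hm^{i,[i/2]}(S,\zz/2)$, which vanishes for $i \geq 1$ since $i > [i/2]$ and motivic cohomology of $\Spec \Field$ vanishes above the diagonal. The implications $(2) \Rightarrow (3)$ and $(2) \Rightarrow (4)$ are immediate, since those conditions restrict $(2)$ to subsequences of indices.

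For $(3) \Rightarrow (1)$ I would first treat the case where $n$ is even. The hypothesis that $u_{2^r}(q) = 0$ for every $r$ fulfills criterion $(3)$ of Theorem \ref{In} for every $m$ with $2^{m-2} \leq n$, so $q \in I^m$ for every such $m$. Taking $m$ with $2^m > n$ and applying the Arason--Pfister Hauptsatz (anisotropic forms in $I^m$ have dimension $\geq 2^m$) forces $q$ to be hyperbolic, i.e.\ $q \cong \hh^{n/2} = q_n$ (using $\sqrt{-1} \in \Field$). For odd $n$ I would pass to the even-dimensional companion $p = q \perp \la \ddet_{\pm}(q)\ra$; the orthogonal-sum formula of Proposition \ref{Olmn}, together with the fact that $u_1$ of a one-dimensional form is a $\gamma$-class, lets one transfer the vanishing of $u_{2^r}(q)$ to vanishing of the relevant $u_{2^r}(p)$, apply the even case to obtain $p \cong q_{n+1}$, and then recover $q \cong q_n$ by Witt cancellation of $\la 1 \ra \simeq \la \ddet_{\pm}(q_n)\ra$.

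The implication $(4) \Rightarrow (1)$ runs along the same lines, this time using Theorem \ref{Pfister}: for any nonzero $s$-fold Pfister form $q_\alpha$ with $s \geq 1$ one has $u_{2^s - 1}(q_\alpha) = \mu_\alpha \neq 0$. If $q \notin I^{r+1}$ for some $r \geq 1$ with $2^r - 1 \leq n$, then the $J$-filtration conjecture of Orlov--Vishik--Voevodsky (used already in the proof of Theorem \ref{In}) supplies a field extension $L/\Field$ over which $(q_L)_{\mathrm{an}}$ is an $s$-fold Pfister form with $1 \leq s \leq r$. Since all $u_j$ of a hyperbolic summand vanish, Proposition \ref{Olmn} gives $u_{2^s-1}(q_L) = u_{2^s-1}((q_L)_{\mathrm{an}}) = \mu_\alpha \neq 0$, contradicting $u_{2^s-1}(q) = 0$. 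Hence $q \in I^{r+1}$ for every such $r$, and one concludes as in the $(3) \Rightarrow (1)$ argument (with the same odd-dimensional reduction).

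The main obstacle is precisely the odd-dimensional case: when $n$ is odd, $q \notin I$ automatically, so neither Theorem \ref{In} nor the Arason--Pfister Hauptsatz applies to $q$ directly. The workaround via $p = q \perp \la \ddet_{\pm}(q)\ra$ is standard, but it requires bookkeeping through the addition formula of Proposition \ref{Olmn} to confirm that the vanishing hypothesis $(3)$ or $(4)$ on $q$ genuinely entails the corresponding vanishing on $p$; once this is done everything else follows from Theorem \ref{In}, Theorem \ref{Pfister}, the $J$-filtration conjecture, and the Arason--Pfister Hauptsatz.
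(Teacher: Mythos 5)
Your implications $(1\row 2)$, $(2\row 3)$, $(2\row 4)$ are fine, and your treatment of the even-dimensional case is correct. For $(3\row 1)$ you take a slightly different route from the paper --- Theorem \ref{In} applied for all achievable exponents plus the Arason--Pfister Hauptsatz --- where the paper runs the Knebusch tower down to a height-one form (proportional to a Pfister form by Pfister's theorem) and quotes Theorem \ref{Pfister} directly; both work. For $(4\row 1)$ your argument is essentially the paper's, and the scalar ambiguity in ``proportional to a Pfister form'' is harmless for you since the odd-indexed classes are invariant under scaling by Proposition \ref{lambdadejstvie}.

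The gap is the odd-dimensional reduction, which you explicitly defer as ``bookkeeping'': with your choice of companion $p=q\perp\la a\ra$, $a=\ddet_{\pm}(q)$, the bookkeeping does not close. Proposition \ref{Olmn} gives, on $\hiq{X_q}=\hiq{X_p}\times\hiq{X_{\la a\ra}}$,
$$
u_{2^s}(p)=u_{2^s}(q)+u_{2^s-1}(q)\cdot\{a\},\qquad
u_{2^s-1}(p)=u_{2^s-1}(q)+u_{2^s-2}(q)\cdot\gamma_a,
$$
and hypotheses $(3)$, $(4)$ kill only the first summands, while the cross-terms involve classes about which you know nothing. (A secondary issue: these formulas compute $\nu^*(u_i(p))$ for the projection $\nu:\hiq{X_q}\row\hiq{X_p}$, whose injectivity on cohomology is unclear when $a$ is not a square, so even vanishing of the right-hand sides would not immediately give $u_i(p)=0$.) The paper sidesteps all of this by adding $\la 1\ra$ instead: its torsor is trivial, all of its subtle classes vanish, so $u_i(q\perp\la 1\ra)=u_i(q)$ on the nose and $\hiq{X_{q\perp\la 1\ra}}=\hiq{X_q}$. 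Alternatively, your reduction can be salvaged by one observation you did not make: both $(3)$ and $(4)$ contain $u_1(q)=0$, hence $\{a\}=w_1(q)=\tau\cdot u_1(q)=0$ in $\hm^{1,1}(\hiq{X_q},\zz/2)\cong K^M_1(\Field)/2$ (the cohomology of $\hiq{X_q}$ agrees with that of the point in bidegrees on or below the diagonal), so $a$ is a square and your $p$ coincides with $q\perp\la 1\ra$ after all. As written, though, your proof defers exactly the step that needs an idea.
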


\begin{proof}
The implications $(1\row 2)$, $(2\row 3)$, and $(2\row 4)$ are evident.
On the other hand, by adding $\la 1\ra$ to our form, if needed, we can assume that it is even-dimensional.
If $q\not\cong q_n$, then using the tower of M.Knebusch (see \cite{Kn1}) we can find a field
extension $L/k$ such that $(q_L)_{an}$ is a Pfister form (using the result of A.Pfister:
any even-dimensional form of height one is proportional to a Pfister form - see \cite{Pf}).
Then, by Theorem \ref{Pfister}, $u_{2^r}(q_L)\neq 0$, and $u_{2^{r+1}-1}(q_L)\neq 0$, for some $r$.
This proves $(3\row 1)$ and $(4\row 1)$.
\end{proof}

Although, the {\it subtle Stiefel--Whitney classes} distinguish the triviality of the torsor, these do not,
unfortunately, distinguish torsors among themselves.

%%%%%%%%%%%%%%%%%%%%%%%%%%%%%%%%%%%%%%%%%%%%%%%%%%%%%%%%%%%%%%%%%%%%
\begin{exa}
\label{<<a>>}
Let $q_{\alpha}=\lva a_1,\ldots,a_d\rva$ be a $d$-fold anisotropic Pfister form, and
$p$ be an odd-dimensional form. Consider $q=q_{\alpha}\cdot p$. Then $q$ is even-dimensional, and,
for any field extension $L/k$,
$$
(X_q)|_L\,\,\,\text{is trivial}\LRw \alpha|_L=0\in K^M_d(k)/2.
$$
Thus, $\hiq{X_q}=\hiq{\alpha}$.
Moreover, if $p=p_1\perp p_2$, and $q_i=q_{\alpha}\cdot p_i$, then
$\hiq{X_{q_1}}\times\hiq{X_{q_2}}=\hiq{\alpha}$.
If $p=\la b_1,\ldots,b_m\ra$, then, by
the proof of Theorem \ref{Pfister},
$\sum_iu_i(b_l\cdot q_{\alpha})=\sum_{s=0}^{n-2}Q_{[0,\ldots,n-2]\backslash s}(\gamma_{\alpha})+
\mu_{\alpha}\cdot (1+\{b_l\})$.
It is also known (is contained in the original version of \cite{OVV}) that
(provided $-1$ is a square in $k$):
\begin{equation*}
Q_I(\gamma_{\alpha})\cdot Q_J(\gamma_{\alpha})=
\begin{cases}
&\mu_{\alpha}\cdot Q_{I\cap J}(\gamma_{\alpha}),\,\,\,\text{if}\,\,\,I\cup J=\{0,\ldots,n-2\};\\
&0\,\,\,\text{otherwise}.
\end{cases}
\end{equation*}
By Theorem \ref{Hmalpha} each positive diagonal of $\hm^{*,*'}(\hiq{\alpha},\zz/2)$ can
be identified with $\alpha\cdot K^M_*(k)/2\subset K^M_*(k)/2$. It follows from Proposition \ref{Olmn},
that under this identification, {\it subtle Stiefel--Whitney classes} of $q$ are identified with
(some multiples of)
$\alpha\cdot\omega_j(p)$, for some $j$.
Notice, that it is more "informative" than $\omega_l(q)$!
At the same time, if $p-p'\in I^3$, then {\it subtle Stiefel--Whitney classes} of $q=q_{\alpha}\cdot p$
and $q'=q_{\alpha}\cdot p'$ will be the same.
\end{exa}

But subtle Stiefel--Whitney classes carry a lot of information about $q$. In particular, they
contain {\it Arason invariant} (see \cite{Ar}) and all higher invariants $e_r:I^r/I^{r+1}\row K^M_r(k)/2$.
Indeed, since, for any $q\in I^n$,  $u_{2j+1}(q)=0$, for all $j<2^{n-1}-1$, it follows from
Theorem \ref{GdhiqEV} and the fact that $\hm^{b,a}(G_d(q),\zz/2)=0$, for all $b>2a$, that
$\hm^{2^n-1,2^{n-1}-1}(\hiq{X_q},\zz/2)=\zz/2\cdot u_{2^n-1}(q)$.
On the other hand, by \cite[Proposition 2.3]{OVV}, we have an exact sequence:
$$
0\row\hm^{n,n-1}(\hiq{X_q},\zz/2)\stackrel{\cdot\tau}{\lrow}K^M_n(k)/2\row K^M_n(k(X_q))/2.
$$
Clearly, $e_n(q)\in\kker(K^M_n(k)/2\row K^M_n(k(X_q))/2)=\kker(K^M_n(k)/2\row K^M_n(k(F_d(q)))/2)$,
and it follows from \cite[Theorems 3.2 and 4.2]{OVV} that
$\hm^{n,n-1}(\hiq{X_q},\zz/2)=\zz/2\cdot\gamma$, where $\tau\cdot\gamma=e_n(q)$.
By the $J$-filtration Conjecture (\cite[Theorem 4.3]{OVV}), there exists such field extension $L/k$
that $(q_L)_{an}$ is an $n$-fold Pfister form $q_{\alpha}$ (if $q\notin I^{n+1}$).
And we know from Theorem \ref{Pfister} that
$u_{2^n-1}(q_{\alpha})=Q_{n-2}\circ\ldots\circ Q_0(\gamma_{\alpha})$.
Hence, the same is true for $q$. Thus, we obtain:

%%%%%%%%%%%%%%%%%%%%%%%%%%%%%%%%%%%%%%%%%%%%%%%%%%%%%%%%%%%%%%%%%%%%%%%%%%%%

\begin{thm}
\label{en}
Let $q\in I^n$. Then the map
$$
Q_{n-2}\circ\ldots\circ Q_0:\hm^{n,n-1}(\hiq{X_q},\zz/2)\stackrel{\cong}{\lrow}
\hm^{2^n-1,2^{n-1}-1}(\hiq{X_q},\zz/2)
$$
is an isomorphism, and
$$
(Q_{n-2}\circ\ldots\circ Q_0)^{-1}(u_{2^n-1}(q))\cdot\tau=e_n(q).
$$
\end{thm}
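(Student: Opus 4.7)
The plan is to pin down both source and target as one-dimensional $\zz/2$-vector spaces with canonical generators, and then match the generators via reduction to the Pfister case.

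First I would identify the source. By \cite[Theorems 3.2, 4.2]{OVV}, the group $\hm^{n,n-1}(\hiq{X_q},\zz/2)$ is one-dimensional, generated by an element $\gamma$ such that $\tau\cdot\gamma=e_n(q)$ in $K^M_n(\Field)/2$, which makes sense because $e_n(q)\in\kker(K^M_n(\Field)/2\row K^M_n(\Field(X_q))/2)$ since $q\in I^n$ and since $\hiq{X_q}=\hiq{F_q}$ in the even-dimensional case (and otherwise split off $\hiq{\{a\}}$, which does not affect the relevant diagonal). So the source slot is essentially dictated by the Arason-type invariant.

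Next I would identify the target. By Proposition \ref{minJU} combined with Proposition \ref{JIn}, the hypothesis $q\in I^n$ forces $u_{2j+1}(q)=0$ for all $0\leq j<2^{n-1}-1$. Plugging this vanishing into the motivic decomposition of Theorem \ref{GdhiqEV} (or Proposition \ref{GdhiqOD} in the odd case), the motive $M(G_d(q))$ becomes an iterated extension of shifts of $M(\hiq{X_q})$ where all nontrivial twists $(j)[2j+1]$ satisfy $j\geq 2^{n-1}-1$. The bidegree $(2^n-1,2^{n-1}-1)$ lies on the line $b=2a+1$ with $a=2^{n-1}-1$, so using $\hm^{b,a}(G_d(q),\zz/2)=0$ for $b>2a$ and tracking the shifts, the only contribution to $\hm^{2^n-1,2^{n-1}-1}(\hiq{X_q},\zz/2)$ comes from the lowest cone piece, giving $\zz/2\cdot u_{2^n-1}(q)$.

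Now I would show that $(Q_{n-2}\circ\ldots\circ Q_0)(\gamma)=u_{2^n-1}(q)$. If $q\in I^{n+1}$, then $e_n(q)=0$, so $\gamma=0$, and by the same decomposition argument applied one step further $u_{2^n-1}(q)$ also vanishes, so the statement is trivial. Otherwise, by the $J$-filtration Conjecture \cite[Theorem 4.3]{OVV}, there exists a field extension $L/\Field$ such that $(q_L)_{an}=q_\alpha$ is an $n$-fold Pfister form with $\alpha=e_n(q)|_L\neq 0$. Over $L$, Theorem \ref{Hmalpha} identifies $\gamma|_L$ with $\gamma_\alpha$, and Theorem \ref{Pfister} gives $u_{2^n-1}(q_\alpha)=Q_{n-2}\circ\ldots\circ Q_0(\gamma_\alpha)$. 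Since $u_{2^n-1}(q)|_L=u_{2^n-1}(q_\alpha)$ and the Milnor operations commute with base change, the equality $(Q_{n-2}\circ\ldots\circ Q_0)(\gamma)=u_{2^n-1}(q)$ holds after restriction to $L$; but by step (i) the target is either zero or generated by $u_{2^n-1}(q)$, and by the OVV exact sequence the source injects into $K^M_n(\Field)/2$, so both groups inject into their $L$-counterparts and the identity holds over $\Field$.

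The main obstacle, I expect, is the dimension-one computation for the target group $\hm^{2^n-1,2^{n-1}-1}(\hiq{X_q},\zz/2)$: one must show that no other piece of the extension in Theorem \ref{GdhiqEV} contributes in this bidegree. This requires both the vanishing of $u_{2j+1}(q)$ for $j<2^{n-1}-1$ (so that the ``lower'' Tate factors collapse and cannot shift things into this diagonal) and the absence of higher-diagonal contributions (forced by the diagonal bound $b\leq 2a$ on the split Grassmannian). The rest is either formal (the one-dimensionality of the source) or a direct transplantation of the Pfister computation already established in Theorem \ref{Pfister}.
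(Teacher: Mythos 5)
Your proposal is correct and follows essentially the same route as the paper: the source is pinned down by the exact sequence and Theorems 3.2, 4.2 of \cite{OVV}, the target is identified as $\zz/2\cdot u_{2^n-1}(q)$ via the vanishing $u_{2j+1}(q)=0$ for $j<2^{n-1}-1$ together with Theorem \ref{GdhiqEV} and the bound $\hm^{b,a}(G_d(q),\zz/2)=0$ for $b>2a$, and the generators are matched by passing to an extension $L/\Field$ where $(q_L)_{an}$ is an $n$-fold Pfister form and invoking Theorem \ref{Pfister}. Your added justification of the descent from $L$ back to $\Field$ (injectivity of both groups under restriction) and the explicit treatment of the degenerate case $q\in I^{n+1}$ are details the paper leaves implicit, but the argument is the same.
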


\begin{rem}
\label{en2^n-1}
One can also show that $e_n(q)=(Q_{n-3}\circ\ldots\circ Q_0)^{-1}(u_{2^{n-1}}(q))\cdot\tau$,
but it requires a bit more of work.
\end{rem}

%%%%%%%%%%%%%%%%%%%%%%%%%%%%%%%%%%%%%%%%%%%%%%%%%%%%%%%%%%%%%%%%%%%%%%%%%%%%%%%%%%%%%%%%%%%

%%%%%%%%%%%%%%%%%%%%%%%%%%%%%%%%%%%%%%%%%%%%%%%%%%%%%%%%%%%%%%%%%%%%%%%%%%%%%%%%%%%%%%%%%%%%%%

Alexander Smirnov

smirnov@pdmi.ras.ru

St. Petersburg Departement of Steklov Math. Institute
\\

Alexander Vishik

alexander.vishik@nottingham.ac.uk

School of Mathematical Sciences, University of Nottingham

\end{document}